  \mathchardef\hyphenmathcode=\mathcode`\- 
  \definecolor{codegreen}{rgb}{0,0.6,0}
  \definecolor{codegray}{rgb}{0.5,0.5,0.5}
  \definecolor{codepurple}{rgb}{0.58,0,0.82}
  \definecolor{backcolour}{rgb}{0.95,0.95,0.92}
  \lstdefinestyle{mystyle}{
      backgroundcolor=\color{backcolour},   
      commentstyle=\color{codegreen},
      keywordstyle=\color{magenta},
      numberstyle=\tiny\color{codegray},
      stringstyle=\color{codepurple},
      basicstyle=\ttfamily\footnotesize,
      breakatwhitespace=false,         
      breaklines=true,                 
      captionpos=b,                    
      keepspaces=true,                 
      numbers=left,                    
      numbersep=5pt,                  
      showspaces=false,                
      showstringspaces=false,
      showtabs=false,                  
      tabsize=2
  }
  \let\origlstlisting=\lstlisting
  \let\endoriglstlisting=\endlstlisting
  \newtheorem{thm}{Theorem}[section]
  \newtheorem{lemma}[thm]{Lemma}
  \newtheorem{cor}[thm]{Corollary}
  \newtheorem{corollary}[thm]{Corollary}
  \newtheorem{prop}[thm]{Proposition}
  \newtheorem*{main}{Main Theorem}{}
  \numberwithin{figure}{section}
  \theoremstyle{remark}
  \newtheorem{definition}[thm]{Definition}
  \newtheorem{remark}[thm]{Remark}
  \newtheorem*{definition*}{Definition}
  \newtheorem*{remark*}{Remark}
  \newtheorem{observation}[thm]{Observation}
  \newcommand{\norm}[1]{\left\lVert#1\right\rVert}
  \newcommand{\bigzero}{\mbox{\normalfont\Large 0}}
  \newcommand{\rvline}{\hspace*{-\arraycolsep}\vline\hspace*{-\arraycolsep}}
  \def\R{{\mathbb R}}
  \def\C{{\mathcal C}}
  \def\D{{\mathcal D}}
  \def\P{{\mathbb P}}
  \def\F{{\mathcal F}}
  \newcommand{\la}{\langle}
  \newcommand{\ra}{\rangle}
  \newcommand{\rk}{n} 
  \newcommand{\free}[1][\rk]{\mathbb{F}_{#1}}
  \newcommand{\FreeF}[1][\rk]{\mathcal{FF}_{#1}}
  \DeclareMathOperator{\vol}{vol}
  \DeclareMathOperator{\Out}{Out}
  \DeclareMathOperator{\Aut}{Aut}
  \newcommand{\CV}[1][\rk]{\text{CV}_{#1}} 
  \newcommand{\cv}[1][\rk]{\text{cv}_{#1}} 
  \newcommand{\CVclo}[1][\rk]{\overline{\text{CV}}_{#1}} 
  \newcommand{\cvclo}[1][\rk]{\overline{\text{cv}}_{#1}} 
  \newcommand{\AT}{\mathcal{AT}}
  \newcommand{\Curr}[1][\rk]{\text{Curr}_{#1}} 
  \newcommand{\PCurr}[1][\rk]{\mathbb{P}\text{Curr}_{#1}} 
  \newcommand{\supp}{\text{supp}} 
  \DeclareMathOperator{\id}{id}
  \DeclareMathOperator{\rank}{rank}
  \DeclareMathOperator{\diam}{diam}
  \newcommand{\Teich}{\ensuremath{\text{Teichm\"uller}}\xspace}
  \newcommand{\from}{\colon\,}
\begin{document}


  \title{Limit sets of unfolding paths in Outer space}

  \author{Mladen Bestvina, Radhika Gupta, and Jing Tao}
\date{}
  \maketitle

  \abstract{We construct an unfolding path in Outer space which does not
  converge in the boundary, and instead it accumulates on the entire
  1-simplex of projectivized length measures on a non-geometric  arational $\R$-tree $T$.
  We also show that $T$ admits exactly two dual ergodic projective
  currents.} 
  
\section{Introduction}

  For the once-punctured torus, the Thurston compactification of the \Teich
  space by projective measured laminations coincides with the visual
  compactification of the hyperbolic plane. In this case, every geodesic
  ray has a unique limit point, and the dynamical behavior of the ray in
  moduli space is governed by the continued fraction of its limit point.
  For hyperbolic surfaces of higher complexity, \Teich space with the
  \Teich metric is no longer negatively curved \cite{Mas75, MW95} (or even
  Riemannian), and the Thurston boundary is no longer its visual boundary
  \cite{Ker80}. More surprisingly, geodesic rays do not always converge
  \cite{Len08, LLR}.

  For hyperbolic surfaces of higher complexity, another interesting
  phenomenon is the existence of nontrivial simplices in the Thurston
  boundary which correspond to measures on non-uniquely ergodic laminations.
  Particularly interesting is the case when the underlying lamination is
  minimal and filling, also called \emph{arational}. Constructions of
  non-uniquely ergodic arational laminations have a long history, and
  typically used flat structures on surfaces \cite{Veech, Sataev, KeynesNewton, Keane}. A topological construction was
  introduced in \cite{Gabai}. In \cite{LLR}, Leininger, Lenzhen and Rafi
  combined this topological approach with some arithmetic parameters akin to
  continued fractions. This allowed them to show that it is possible for
  the full simplex of measures on a non-uniquely ergodic arational
  lamination to be realized as the limit set of a \Teich geodesic ray.

  In this paper, we take the above construction into Culler-Vogtmann's
  Outer space \cite{CV:OuterSpace}. A Thurston-type boundary for Outer
  space is given by the set of projective classes of minimal, very small
  $\free$-trees \cite{CM:LengthFunction, BF:OuterLimits, CL:BoundaryOuterSpace, Horbez} and the action of
  $\Out(\free)$ extends continuously to the compactified space. The
  analogue of arational laminations are \emph{arational trees}; for
  example, trees dual to arational laminations on a once-punctured surface
  fall into this category. There are other examples, such as trees dual to
  minimal laminations on finite 2-complexes that are not surfaces, called
  {\it Levitt type}; and yet others, called \emph{non-geometric}, that do
  not come from the latter two constructions. The non-uniquely ergodic
  phenomenon for laminations has two natural analogues for $\free$-trees:
  one in terms of length measures on trees, giving rise to
  \emph{non-uniquely ergometric} trees \cite{Guirardel:Dynamics}, and the
  other in terms of currents, giving \emph{non-uniquely ergodic} trees, see
  \cite{CHL:NUE}. It is an open problem to determine whether these two
  notions coincide. An example of a Levitt type non-uniquely ergometric
  arational tree, modeled on Keane's construction, was given in
  \cite{reiner}. 
 
  In Outer space, the analogue of \Teich metric is the Lipschitz metric and
  that of \Teich geodesics are folding paths. However, unlike \Teich
  geodesics, a folding path in Outer space has a forward direction,
  reflecting the asymmetry of the Lipschitz metric. Even though the
  boundary of Outer space is not a visual boundary, a folding path always
  converges along its forward direction. Our main result is that this nice
  behavior does not persist in the backward direction; in fact, in the
  backward direction, folding paths can behave as badly as \Teich
  geodesics. Define an \emph{unfolding path} in Outer space to be a folding
  path with the backward direction. Our main result, as follows, is a
  direct analogue of the results of \cite{LLR}.   

  \begin{thm}
    There exists an unfolding path in Outer space of free group of rank 7
    which does not converge to a point in the boundary of Outer space. In
    fact, the limit set is a 1-simplex consisting of the full set of length
    measures on a non-geometric and arational tree $T$. Moreover, the set
    of projective currents dual to $T$ is also a 1-dimensional simplex. In
    particular, $T$ is neither uniquely ergometric nor uniquely ergodic.
  \end{thm}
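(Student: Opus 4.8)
The plan is to adapt the Leininger–Lenzhen–Rafi construction from Teichmüller space to Outer space, replacing surfaces by graphs and laminations by trees. First I would build a sequence of free-group automorphisms (or rather a sequence of graphs and folding/unfolding maps) whose combinatorics mimics a non-uniquely ergodic interval-exchange-like pattern. Concretely, one picks a rank-7 free group, fixes a suitable "bad" sequence of integers (the continued-fraction-type parameters), and assembles a nested sequence of trees $T_k$ in which two distinct projective length measures (and dually, two distinct currents) are being slowly separated. The unfolding path is then the backward folding path through a telescoping sequence of these graphs, and one must verify that the path is a legitimate folding path — i.e., that at each stage the relevant train-track maps are genuine folds, so that reversing the parametrization gives an unfolding path in the sense defined above.

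Next I would analyze the limit. The key is to show that the forward (unfolded) endpoints-in-the-limit converge to an $\R$-tree $T$ that is arational: one checks minimality and that no proper free factor is carried, typically by controlling the "legal" laminations of the approximating train tracks and showing that any conjugacy class eventually gets stretched. Simultaneously I would exhibit the two ergodic length measures $\mu_1,\mu_2$ on $T$ explicitly as limits of the two families of metrics on the $T_k$ coming from the two "sides" of the combinatorial construction, and likewise the two dual ergodic currents $\eta_1,\eta_2$ as limits of counting currents of two distinguished families of conjugacy classes. The non-convergence of the unfolding path then follows because the path's projected position oscillates between neighborhoods of $[\mu_1]$ and $[\mu_2]$ — this uses the arithmetic parameters to force the ratios of lengths to diverge in both directions, exactly as in \cite{LLR}. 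That the accumulation set is the \emph{entire} $1$-simplex, and not just its endpoints, comes from the intermediate-value behavior of the path as it passes between the two extremes.

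I would then prove that $T$ is non-geometric. This is the step where Outer space genuinely differs from the surface case: one must rule out that $T$ is dual to a lamination on a surface or on a finite $2$-complex (Levitt type). The natural tool is an index/Gaboriau-type count: geometric trees of given rank satisfy a bound on the number of orbits of branch points plus the ranks of point stabilizers (the "geometric index" equals $2n-2$), whereas an arational tree arising from this construction can be arranged to have strictly smaller index, forcing non-geometricity. Alternatively one shows directly that the dual lamination has no carrying surface by examining the loop structure of the approximating graphs. Finally, uniqueness of the ergodic length measures and of the ergodic currents — exactly two of each — is proved by a combinatorial coding argument: points of $T$ (resp. leaves of the dual lamination) are coded by paths through the inverse system of graphs, and one shows the space of invariant measures on this coding space is a $1$-simplex, using the bounded-distortion/Rauzy-induction estimates built into the choice of parameters.

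The main obstacle I anticipate is the simultaneous control of \emph{both} the length measures and the dual currents: the LLR machinery on the surface side controls transverse measures to a lamination, but in Outer space the current side is not automatically dual-controlled by the tree side (indeed whether non-unique ergometricity and non-unique ergodicity coincide is, as the introduction notes, open). So the delicate part is designing the combinatorics of the rank-7 example so that the intersection form $\langle T,\cdot\rangle$ restricted to the space of dual currents is exactly $2$-dimensional — i.e., checking that the two "obvious" currents $\eta_1,\eta_2$ span all dual currents and are not projectively equal — which will require a separate ergodicity argument on the current space rather than a transfer from the tree. The non-geometricity verification is the second most delicate point, since it requires pinning down the exact geometric index of $T$.
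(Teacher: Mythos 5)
Your outline matches the paper's architecture in broad strokes (an LLR-style sequence of automorphisms of $\free[7]$, two subsequential limits giving the two ergodic length measures and the two dual currents, non-geometricity as a separate step), but it has genuine gaps at the two places where the real work happens. First, arationality: ``showing that any conjugacy class eventually gets stretched'' is not a criterion for arationality --- it would at best show the tree is free with dense orbits. Arationality requires that \emph{every} proper free factor acts freely and simplicially on its minimal subtree, and the only workable route is the identification of $\partial\FreeF$ with arational trees: one must show the projection of the folding sequence to $\FreeF[7]$ is an unbounded quasi-geodesic. The hard part is the lower bound on distances, which the paper gets from subfactor projections together with a combinatorial fact ruling out that a long run of the factors $A_i=\Phi_i\la d,e,f\ra$ can all be compatible with a single free factor $B$. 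Since there is no known algorithm for deciding whether a collection of free factors has a common complement, the paper detects this in $H_1(\free[7];\Z/2)$ by an explicit computation, and that is precisely what forces the arithmetic condition $r_i\equiv i \bmod 7$. Your proposal contains no mechanism for this step.

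Second, the bridge from ``the coding space carries exactly two ergodic invariant measures'' to ``exactly two projective currents dual to $T$'': you correctly flag this as the main obstacle but do not resolve it. The paper's resolution is a specific chain: the Namazi--Pettet--Reynolds isomorphism identifies currents on the unfolding sequence with currents supported on its legal lamination $\Lambda$; a volume-decay argument shows $\tilde\Lambda\subseteq L(T)$; and then a theorem of Coulbois--Hilion--Reynolds for indecomposable trees upgrades this containment to $\C(\Lambda)=\C(T)$. Note that this last step \emph{uses} non-geometricity (via Reynolds, non-geometric arational trees are indecomposable), so non-geometricity is load-bearing for non-unique ergodicity, not just an added feature. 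On non-geometricity itself, your index/Gaboriau--Levitt approach is a genuinely different route from the paper's, which instead shows that every resolution of a band complex factors through a finite stage of the folding sequence (following Bestvina--Feighn), using that the inverse train track maps have no periodic INPs so illegal turns are eventually consumed; computing the geometric index of $T$ directly from the construction is not obviously feasible, so as written that step is unsubstantiated.
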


  We use the framework of folding and unfolding sequences. Every such
  sequence tracks the combinatorics of an appropriate folding path, resp.\
  unfolding path, in Outer space.  An infinite folding sequence has a
  naturally associated limiting tree in the boundary of Outer space and an
  unfolding sequence has a naturally associated algebraic lamination,
  called the \emph{legal lamination}. The graphs in the folding sequence
  can be given compatible metrics, which are then  used to parametrize the
  different length measures supported on the limiting tree. Compatible edge
  thicknesses on the graphs of the unfolding sequence parametrize the
  different currents with support contained in the legal lamination. The
  latter can then be used to study the currents dual to the trees in the
  limit set of the unfolding sequence. See \cite{NPR} or our
  \Cref{sec:sequences} for definitions and more precise statements.

  Modeling the construction of \cite{LLR} on a 5-holed sphere, the folding
  and unfolding sequences we consider come from explicit sequences of
  automorphisms of the free group of rank 7. More explicitly, fix a
  non-geometric fully irreducible automorphism on three letters and extend
  it to an automorphism $\phi$ of $\free[7]$ by identity on the other four
  basis elements. Also let $\rho$ be a finite order automorphism of
  $\free[7]$ that rotates the support of $\phi$ off itself. For an integer
  $r$, set $\phi_r = \rho \phi^r$.  Given a sequence $(r_i)_{i \ge 1}$ of
  positive integers, define a sequence of automorphisms by \[\Phi_i =
  \phi_{r_1} \circ \cdots \circ \phi_{r_i}.\] From $(\Phi_i)_i$ we get an
  unfolding sequence using the train track map induced by $\phi_{r_i}$, and
  from $(\Phi_i^{-1})_i$ we get a companion folding sequence. The
  parameters $(r_i)_i$ play the role of the continued fraction expansion
  for the limiting tree of the folding sequence, and adjusting them
  produces different types of trees and behaviors of the unfolding
  sequence. In particular, we show that if the sequence $(r_i)_i$ satisfies
  certain arithmetic conditions and grows sufficiently fast, then the
  limiting tree is arational, non-geometric, non-uniquely ergodic, and
  non-uniquely ergometric. Moreover, the limit set of the unfolding
  sequence is the full simplex of length measures on the tree. We refer to
  \Cref{thm:conclusion} for the full technical statement.   

  To see how the parameters $(r_i)_i$ come into play, it is informative to
  look at the sequence of free factors $A_i = \Phi_i(A)$, where $A$ is the
  support of $\phi$. The $A_i$'s are the projection of the folding sequence
  to the free factor complex $\FreeF[7]$. By our construction, $A_i$ and
  $A_{i+1}$ are disjoint (meaning $\free[7]=A_i*A_{i+1}*B_i$ for some
  $B_i$), but $A_i, A_{i+2}$ are not, and $r_i$ measures the distance
  between the projections of $A_{i-2}$ and $A_{i+2}$ to the free factor
  complex of $A_i$. Morally, if $r_i$'s are sufficiently large, then
  $(A_i)_i$ forms a quasi-geodesic in $\FreeF[7]$. Hence, by
  \cite{BR:FFBound,H}, the limiting tree of the folding sequence is
  arational. In addition, we show that the tree is non-geometric.
  To get two currents on the tree, we take loops in the $A_i$'s, which
  correspond to currents on $\free$ and take projective limits of the odd
  and even subsequences. Non-unique ergometricity of the tree follows a
  similar principle. 
  
  Although our construction is general in spirit, the case of rank 7 is
  already fairly involved, and some computations used computer assistance.
  One issue is that there is no known algorithm to tell if a collection of
  free factors has a common complement. This issue appears in the proof of
  arationality of the limiting tree that led to the peculiar looking
  arithmetic conditions on the parameters; see \Cref{sec:arational tree}.
\newpage
  \subsubsection*{Outline}

  \begin{itemize}
    \item In \Cref{sec:background}, we review some background
      material, including train track maps, Outer space, currents, length
      measures, and arational trees. 
    \item In \Cref{sec:sequences} we discuss folding and unfolding
      sequences. We relate length measures on a folding sequence with the
      length measures on the limiting tree when it is arational. We also
      define the legal lamination for an unfolding sequence and state a
      result from \cite{NPR} relating the currents supported on the legal
      lamination with those of the unfolding sequence. 
    \item In \Cref{sec:construction}, we discuss our main
      construction to generate from a sequence $(r_i)_i$ of positive
      integers a sequence of automorphisms of $\free[7]$. The associated
      transition matrices for these automorphisms have block shapes which
      we use to analyze their asymptotic behavior. From each sequence of
      automorphisms and their inverses, we get a folding and unfolding
      sequence of graphs of rank 7 induced by their train track maps.
    \item In \Cref{sec:arational tree}, we show that under the right
      conditions on $(r_i)_i$, the folding sequence converges to a
      non-geometric and arational tree $T$ in boundary of Outer space of
      rank 7. To show arationality, we project the folding sequence to the
      free factor complex and show it is a quasi-geodesic.
    \item In \Cref{sec:unfolding}, we study the behavior of the
      unfolding sequence. The main result is that if the sequence $(r_i)_i$
      grows sufficiently fast, then the legal lamination of the unfolding
      sequence supports a 1-simplex of projective currents.
    \item In \Cref{sec:folding}, we show that if the sequence $(r_i)_i$
      grows sufficiently fast, then the limiting tree of the folding
      sequence supports a 1-simplex of projective length measures. In
      particular, the limiting tree is not uniquely ergometric. 
    \item In \Cref{sec:combination}, we relate the legal lamination
      of the unfolding sequence to the dual lamination of the limiting tree of
      the folding sequence. This shows the limiting tree is not uniquely
      ergodic.
    \item In \Cref{sec:limit unfolding}, we show that the
      unfolding sequence limits onto the full simplex of length measures on
      the limiting tree of the folding sequence, and thus does not have a
      unique limit in the boundary of Outer space. 
    \item In \Cref{sec:conclusion}, we collect the results to prove
      the main theorem.
    \item In \Cref{sec:appendix}, we prove a technical lemma
      about convergence of products of matrices.
  \end{itemize}

  \subsubsection*{Acknowledgments} 
  
  The authors gratefully acknowledge support: M.B.\ from NSF DMS-1905720,
  R.G.\ from the Sloan Foundation, and J.T.\ from NSF DMS-1651963.
  
\section{Background}
  
  \label{sec:background}
  
  Let $\free$ be the free group of rank $\rk$. We review some background on
  train track maps, Outer space, laminations, currents, arational trees and the free factor
  complex. 
  
  \subsection{Train track maps}

  We recall some basic definitions from \cite{BH:TrainTracks}. Identify
  $\free$ with $\pi_1(\mathrm{R}_n, \ast)$ where $\mathrm{R}_n$ is a rose
  with $\rk$ petals.  A \emph{marked graph} $G$ is a graph of rank $\rk$,
  all of whose vertices have valence at least three, equipped with a
  homotopy equivalence $m\from \mathrm{R}_n \to G$ called a \emph{marking}. 
  
  A \emph{length vector} on $G$ is a vector $\lambda \in \R^{|EG|}$ that
  assigns a positive number, i.e.\ a length, to every edge of $G$. The
  volume of $G$ with respect to $\lambda$ is the total length of all the
  edges of $G$. This induces a path metric on $G$ where the length of an
  edge $e$ is $\lambda(e)$. 
  
  A direction $d$ based at a vertex $v \in G$ is an oriented edge of $G$
  with initial vertex $v$. A \emph{turn} is an unordered pair of distinct
  directions based at the same vertex. A \emph{train track structure} on
  $G$ is an equivalence relation on the set of directions at each vertex $v
  \in G$. The classes of this relation are called \emph{gates}.  A turn
  $(d,d')$ is \emph{legal} if $d$ and $d'$ do not belong to the same gate,
  it is called \emph{illegal} otherwise.  A path is legal if it only
  crosses legal turns.
  
  A map $f \from  G \to G'$ between two graphs is called a \emph{morphism}
  if it is locally injective on open edges and sends vertices to vertices.
  If $G$ and $G'$ are metric graphs, then we can homotope $f$ relative to
  vertices such that it is linear on edges. Similarly, for an $\R$-tree
  $T$, a map $\tilde G\to T$ from the universal cover of $G$ is a morphism
  if it is injective on open edges. To a morphism $f\from  G \to G'$ we
  associate the {\it transition matrix} as follows: Enumerate the
  (unoriented) edges $e_1,e_2,\cdots,e_m$ of $G$ and $e_1', e_2', \cdots,
  e_n'$ of $G'$ . Then the transition matrix $M$ has size $n\times m$ and
  the $ij$-entry is the number of times $f(e_j)$ crosses $e_i'$, i.e.\ it
  is the cardinality of the set $f^{-1}(x)\cap e_j$ for a point $x$ in the
  interior of $e_i'$. If $f$ is in addition a homotopy equivalence, then
  $f$ is a \emph{change-of-marking}. 
 
  A homotopy equivalence $f\from  G \to G$ induces an outer automorphism of
  $\pi_1(G)$ and hence an element $\phi$ of $\Out(\free)$. If $f$ is a
  morphism then we say that $f$ is a \emph{topological representative} of
  $\phi$. A topological representative $f \from  G\to G$ induces a
  \emph{train track structure} on $G$ as follows: The map $f$ determines a
  map $Df$ on the directions in $G$ by defining $Df(e)$ to be the first
  (oriented) edge in the edge path $f(e)$.  We then declare $e_1\sim e_2$
  if $(Df)^k(e_1)=(Df)^k(e_2)$ for some $k\geq 1$.

  A topological representative $f\from  G \to G$ is called a \emph{train
  track map} if every vertex has at least two gates, and $f$ maps legal
  turns to legal turns and legal paths (equivalently, edges) to legal
  paths. Equivalently, every positive power $f^k$ is a topological
  representative. If $f$ is a train track map with transition matrix $M$,
  then the transition matrix of $f^k$ is $M^k$ for every $k\geq 1$. If $M$
  is \emph{primitive}, i.e.\ $M^k$ has positive entries for some $k \ge 1$,
  then Perron-Frobenius theory implies that there is an assignment of
  positive lengths to all the edges of $G$ so that $f$ uniformly expands
  lengths of legal paths by some factor $\lambda>1$, called the
  \emph{stretch factor} of $f$. 
  
  If $\sigma$ is a path (or a circuit) in $G$, we denote by $[\sigma]$ the
  reduced path homotopic to $\sigma$ (rel endpoints if $\sigma$ is a path).
  A path or circuit $\sigma$ in $G$ is called a \emph{periodic Nielsen
  path} if $[f^k(\sigma)]=\sigma$ for some $k\geq 1$.  If $k=1$, then
  $\sigma$ is a \emph{Nielsen path}.  A Nielsen path is \emph{indivisible},
  denoted INP, if it cannot be written as a concatenation of nontrivial
  Nielsen paths.

  The following lemma is an important property of train track maps. For a
  very rudimentary form, see \cite[Lemma 3.4]{BH:TrainTracks} showing that
  INPs have exactly one illegal turn, and for a more involved version see
  \cite{BFH:Laminations} (some details can also be found in
  \cite[Proposition 3.27, 3.28]{KL:InvariantLaminations}). We will need it
  for the proof of \Cref{lem:lose illegal turns} and include a proof here.

  \begin{lemma}\label{lem:lose illegal turns iwip}
    
    Let $h\from  G \to G$ be a train track map with a primitive transition
    matrix. There exists a constant $R >0$ such that for any edge path
    $\gamma$, either, 
    \begin{enumerate}
     \item the number of illegal turns in $[h^{R}(\gamma)]$ is less than
       that of $\gamma$, or, 
     \item $\gamma = u_1 v_1 u_2 v_2 \ldots u_n$ where each $u_i$
       is a legal subpath, possibly degenerate, and each
       $[h^{R}(v_i)]$ is a periodic INP.
    \end{enumerate}
  \end{lemma}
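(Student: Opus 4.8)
The plan is to argue by induction on the number of illegal turns in $\gamma$, reducing the statement to the case of a path with a single illegal turn. Suppose $\gamma$ has illegal turns at vertices $p_1,\dots,p_k$. Decompose $\gamma = \gamma_1 \gamma_2 \cdots \gamma_k$ where each $\gamma_j$ is a subpath containing exactly one illegal turn (in its interior), so the concatenation points fall at legal turns. Since $h$ maps legal turns to legal turns, the images $[h^R(\gamma_j)]$ still concatenate at legal turns, and hence the number of illegal turns in $[h^R(\gamma)]$ is the sum of the numbers of illegal turns in the $[h^R(\gamma_j)]$; no cancellation occurs across the concatenation points because the turns there remain legal. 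Therefore it suffices to treat a path with exactly one illegal turn: if for every such path one of the two alternatives holds (either the illegal turn is ``lost'' under $h^R$, or $[h^R]$ of the path is a periodic INP), then reassembling gives exactly the dichotomy in the statement for general $\gamma$, with the $v_i$ being precisely those one-illegal-turn pieces whose image is a periodic INP.

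So reduce to $\gamma = \alpha \beta$ with a single illegal turn at the concatenation vertex, where $\alpha, \beta$ are legal. Apply $h$ repeatedly. Because $\alpha$ and $\beta$ are legal and $h$ is a train track map, $h^m(\alpha)$ and $h^m(\beta)$ are legal, and $[h^m(\gamma)] = [h^m(\alpha) \cdot h^m(\beta)]$ is obtained by cancelling a common initial segment; so $[h^m(\gamma)]$ again has at most one illegal turn, and if any cancellation of a full ``corner'' happens the illegal turn count drops to zero. The key quantitative input is primitivity of the transition matrix: there is a uniform power and a Perron–Frobenius metric in which $h$ expands legal paths by $\lambda > 1$. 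Thus, unless cancellation keeps eating into the legal pieces, the lengths of the surviving legal parts $\alpha_m, \beta_m$ of $[h^m(\gamma)]$ grow like $\lambda^m$ while the amount cancelled at step $m+1$ is bounded by the length of the shorter image under one application of $h$, which is comparable. The bookkeeping shows: either after some uniformly bounded number $R$ of steps the illegal turn is destroyed (alternative (1)), or the cancellation is ``exact'' at every step, meaning $[h(\gamma_\infty)]$ and $\gamma_\infty$ have the same combinatorial form — which forces $[h^R(\gamma)]$ to be a periodic Nielsen path, and indivisible since it has one illegal turn (cf.\ \cite[Lemma 3.4]{BH:TrainTracks}).

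The constant $R$ must be chosen uniformly, independent of $\gamma$. This is where one uses that there are only finitely many turns in $G$ and only finitely many ``short'' legal paths up to the relevant equivalence: the possible germ data at the illegal turn (the pair of gates, together with bounded-length legal initial segments of $\alpha$ and $\beta$) ranges over a finite set, and for each such germ either iterated folding terminates quickly or it stabilizes into a periodic pattern. Taking $R$ to be (a multiple of) the product of the period bounds over this finite set works. The main obstacle is precisely making this uniformity argument clean: one has to rule out the scenario where the illegal turn persists for a long but bounded time that depends on $\gamma$ before either disappearing or becoming periodic. The resolution is a pigeonhole / finiteness argument on the bounded-complexity germ at the illegal turn: once the germ repeats (which it must, within a uniform number of steps, since after the expanding dynamics kicks in the germ is determined by bounded data), the path is on a periodic orbit of the induced folding operation, and on such an orbit the only legal-plus-legal configuration that survives indefinitely without losing its illegal turn is a periodic INP. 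I would present the finiteness bound first, then the two-case analysis on a single illegal turn, and finally the concatenation/reassembly step.
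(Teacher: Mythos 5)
Your overall strategy for a single illegal turn --- bounded cancellation plus Perron--Frobenius expansion confines the action to a bounded neighborhood of the turn, then pigeonhole on the finitely many such neighborhoods forces periodicity and hence a periodic INP --- is exactly the paper's approach. But as written the crucial quantitative step is missing. You say the amount cancelled at step $m+1$ is ``bounded by the length of the shorter image under one application of $h$, which is comparable''; the point is rather that it is bounded by a \emph{uniform constant} $BCC(h)$ independent of $m$ and of $\gamma$. Setting $C=BCC(h)/(\lambda-1)$, any legal segment adjacent to the illegal turn of length $>C$ has its surviving length grow under iteration, so the entire future of the cancellation is governed by the $C$-neighborhood of the illegal turn. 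That is what makes your ``germ'' a bounded, hence finite, amount of data, and it is what you need to make the pigeonhole honest (the paper takes $R$ larger than the square of the number of such $C$-neighborhood configurations). You also need the final step: a repeated configuration means $h^{j-i}$ maps the terminal $C$-segment on each side of the turn over itself, hence fixes a point on each side, and the subpath between the two fixed points is the periodic INP. Note the conclusion is that $\gamma=u_1v_1u_2$ with $[h^R(v_1)]$ a periodic INP, not that $[h^R(\gamma)]$ itself is one; the legal tails outside the fixed points survive.

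The reduction to the one-turn case is where there is a genuine error. Your claim that the number of illegal turns in $[h^R(\gamma)]$ equals the sum over the pieces, ``because no cancellation occurs across the concatenation points,'' is false. Reduction at the illegal turn \emph{inside} a piece $\gamma_j$ can consume the entire terminal legal segment of $h^R(\gamma_j)$; the last edge of $[h^R(\gamma_j)]$ is then no longer the image of the last edge of $\gamma_j$, the turn at the concatenation point is no longer the image of a legal turn, and cancellation can propagate across it and interact with the neighboring illegal turn. Equivalently: the legal segment of $\gamma$ between two consecutive illegal turns straddles your concatenation point, and cancellation from both of its ends can meet in the middle. All such interactions happen to push toward alternative (1), so the dichotomy is probably not destroyed, but that requires an argument you have not given. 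The paper sidesteps this entirely by decomposing $\gamma$ into its \emph{maximal legal segments} and running the pigeonhole argument simultaneously on every illegal turn of the whole path, producing fixed points of some $h^{j-i}$ on both sides of each surviving illegal turn; these fixed points cut $\gamma$ into legal pieces and pieces mapping to periodic INPs. I recommend you adopt that decomposition rather than trying to repair the additivity claim.
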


  \begin{proof}
    
    Let $\lambda > 1$ be the stretch factor of $h$, and equip $G$ with the
    metric so that $h$ uniformly expands the length of every legal path by
    $\lambda$. It goes back to the work of Thurston (see \cite{cooper})
    that there is a constant $BCC(h)$, called the bounded cancellation
    constant for $h$, such that if $\alpha\beta$ is a reduced edge path
    then $[h(\alpha)][h(\beta)]$ have cancellation bounded by $BCC(h)$. The
    existence of this constant is really a consequence of the Morse lemma
    and the fact that $h$ is a quasi-isometry. Define $C = BCC(h) /
    (\lambda-1)$.

    Here is the significance of $C$. To fix ideas let us assume that
    $\gamma$ has only one illegal turn, so $\gamma=\alpha\beta$ with both
    $\alpha,\beta$ legal. Say $\alpha$ has length $|\alpha|=C+\epsilon>C$.
    Then $h(\alpha)$ has length $\lambda |\alpha|$ and after cancellation
    with $h(\beta)$ the length is $\geq \lambda |\alpha|
    -BCC(h)=|\alpha|+\lambda\epsilon$. Thus assuming $[h^i(\gamma)]$ still
    has an illegal turn, the length of the initial subpath to the illegal
    turn has length growing exponentially in $i$, assuming it is long
    enough.

    We now prove the lemma for paths $\gamma=\alpha\beta$ with one illegal
    turn, and with $\alpha,\beta$ legal. Consider the finite collection of
    paths consisting of those with length at most $C$ with both endpoints
    at vertices, or with length exactly $C$ with only one endpoint at a
    vertex. Let $R$ be a number larger than the square of the size of this
    collection. If $[h^i(\gamma)]=\alpha_i\beta_i$ has one illegal turn
    (with $\alpha_i,\beta_i$ legal) for $i=1,2,\cdots,R$, then by the
    pigeon-hole principle there will be $i<j$ in this range so that the
    $C$-neighborhoods of the illegal turns of $[h^i(\gamma)]$ and
    $[h^j(\gamma)]$ are the same (if $\alpha_i$ or $\beta_i$ has length
    $<C$ this means $\alpha_i=\alpha_j$ or $\beta_i=\beta_j$). We can lift
    $h^{j-i}$ and $\gamma$ to the universal cover of the graph and arrange
    that (the lift of) $\gamma$ and $[h^{j-i}(\gamma)]$ have the same
    illegal turn.  Thus $h^{j-i}$ maps
    the terminal $C$-segment of $\alpha_i$ (or $\alpha_i$ itself) over
    itself (by the above calculation) and therefore fixes a point in
    $\alpha_i$, and similarly for $\beta_i$. The subpath of $[h^i(\gamma)]$
    between these fixed points is a periodic INP, proving the lemma in the
    case $\gamma$ has one illegal turn.

    The general case is similar. Write
    $\gamma=\gamma_1\gamma_2\cdots\gamma_s$ with all $\gamma_k$ legal and
    with the turn between $\gamma_k$ and $\gamma_{k+1}$ illegal. Also
    assume that $[h^i(\gamma)]$ has the same number of illegal turns for
    $i=1,\cdots,R$. We can write
    $[h^i(\gamma)]=\gamma_1^i\gamma_2^i\cdots\gamma_s^i$ with all
    $\gamma_k^i$ legal and the turns between them illegal. For each illegal
    turn corresponding to the pair $(k,k+1)$ there will be $i<j$ in this
    range so that the $C$-neighborhoods of the illegal turn in
    $[h^i(\gamma)]$ and in $[h^j(\gamma)]$ are the same. This gives fixed
    points of $h^{j-i}$ in $\gamma_k^i$ and $\gamma_{k+1}^i$ and these
    fixed points split $\gamma$ into periodic INPs and legal segments, as
    claimed. \qedhere 
  \end{proof}
  
  We will use the lemma in the situation that $h$ has no periodic INPs, in
  which case the conclusion is that whenever $\gamma$ is not legal, then
  $[h^{R}(\gamma)]$ has fewer illegal turns than $\gamma$.

  \subsection{Outer space and its boundary}\label{sec:outer space}
  
  An $\free$-tree is an $\mathbb{R}$-tree with an isometric action of
  $\free$. An $\free$-tree $T$ has dense orbits if some (every) orbit is
  dense in $T$. An $\free$-tree is called \emph{very small} if the action
  is minimal, arc stabilizers are either trivial or maximal cyclic, and
  tripod stabilizers are trivial. We review the definition of Outer space
  first introduced in \cite{CV:OuterSpace}. 
  
  \emph{Unprojectivized Outer space}, denoted by $\cv$, is the set of free,
  minimal, and simplicial $\free$-trees. By considering the quotient
  graphs, $\cv$ is also equivalently the set of marked metric graphs, i.e.\
  the set of triples $(G,m,\lambda)$, where $G$ is a graph of rank $n$ with
  all valences at least $3$, $m\from \mathrm{R}_n \to G$ is a marking, and
  $\lambda$ is a positive length vector on $G$. By
  \cite{CM:LengthFunction}, the map of $\cv \to \R^{\free}$ given by $T
  \mapsto (\norm{g}_T)_{g \in \free}$, where $\norm{g}_T$ is the
  translation length of $g$ in $T$, is an inclusion. This endows $\cv$ with
  a topology. The closure $\cvclo$ in $\R^{\free}$ is the space of very
  small $\free$-trees \cite{BF:OuterLimits,CL:BoundaryOuterSpace}. The
  boundary $\partial \cv = \cvclo-\cv$ consists of very small trees that
  are either not free or not simplicial.
  
  Culler Vogtmann's \emph{Outer space}, $\CV$, is the image of $\cv$ in the
  projective space $\P\R^{\free}$. Elements in $\CV$ can also be described
  as free, minimal, simplicial $\free$-trees with unit covolume.
  Topologically, $\CV$ is a complex made up of simplices with missing
  faces, where there is an open simplex for each marked graph $(G,m)$
  spanned by positive length vectors on $G$ of unit volume. The closure
  $\CVclo$ of $\CV$ in $\P\R^{\free}$ is compact and the boundary $\partial
  \CV = \CVclo-\CV$ is the projectivization of $\partial \cvclo$. 

  The spaces $\cv$ and $\CV$ and their closures are equipped with a natural
  (right) action by $\Out(\free)$. That is, for $\Phi \in \Out(\free)$ and $T
  \in \cvclo$ the translation length function of $T \Phi$ on $\free$
  is $\norm{g}_{T \Phi} = \norm{\phi(g)}_T$, where $\phi$ is any lift of
  $\Phi$ to $\Aut(\free)$.

  \subsection{Laminations, currents and non-uniquely ergodic trees}

  In \cite{BFH:Tits}, Bestvina, Feighn and Handel defined a dynamic
  invariant called the attracting lamination associated to a train track
  map. In this article, we will consider the more modern definition of a
  lamination as given in \cite{CHLI}. 

  Let $\partial \free$ denote the Gromov boundary of $\free$ and let
  $\Delta$ be the diagonal in $\partial \free \times \partial \free$. The
  \emph{double boundary} of $\free$ is $\partial^2 \free = (\partial \free
  \times \partial \free - \Delta) / \mathbb{Z}_2$, which parametrizes the
  space of unoriented bi-infinite geodesics in a Cayley graph of $\free$.
  By an \emph{(algebraic) lamination}, we mean a non-empty, closed and
  $\free$-invariant subset of $\partial^2 \free$. 

  Associated to $T\in \cvclo$ is a \emph{dual lamination} $L(T)$,
  defined as follows in \cite{CHLII}. For $\epsilon > 0$, let 
  \[ 
    L_{\epsilon}(T) = \overline{\{(g^{-\infty}, g^{\infty}) \quad | \quad
    \norm{g}_T < \epsilon, g \in \free \}},
  \]
  so $L_{\epsilon}(T)$ is a lamination and set $L(T) =
  \bigcap_{\epsilon>0}L_{\epsilon}(T)$. Elements of $L(T)$ are called
  \emph{leaves}. For trees in $\cv$, $L(T)$ is empty. 
  
  A \emph{current} is an additive, non-negative, $\free$-invariant function
  on the set of compact open sets in $\partial^2 \free$. Equivalently, it
  is an $\free$-invariant Radon measure on the $\sigma$-algebra of Borel
  sets of $\partial^2 \free$. Let $\Curr$ denote the space of currents,
  equipped with the weak* topology. The quotient space of $\PCurr$ of
  projectivized currents (i.e.\ homothety classes of nonzero currents) is
  compact.

  For $\mu \in \Curr$, let $\supp(\mu) \subset \partial^2  \free$ denote
  the support of $\mu$, which is in fact a lamination.  For $T \in \cvclo$
  and $\mu \in \Curr$, if $\supp(\mu) \subseteq L(T)$, then we say $\mu$ is
  \emph{dual} to $T$. Denote by $\C(T)$ the convex cone of currents dual to
  $T$ and by $\P\C(T)$ the set of projective currents dual to $T$.
  $\P\C(T)$ is a compact, convex space and its extremal points are called
  the \emph{ergodic currents} dual to $T$. We say $T$ is \emph{uniquely
  ergodic} if there is only one projective class of currents dual to $T$,
  and \emph{non-uniquely ergodic} otherwise. In \cite{CH:ErgodicCurrents},
  the authors show that if $T \in \partial \cv$ has dense orbits, then
  $\P\C(T)$ is the convex hull of at most $3\rk-5$ projective classes of
  ergodic currents dual to $T$.  

  In \cite{KL:IntersectionForm}, Kapovich and Lustig established a
  length pairing, $\la \cdot, \cdot \ra$, between $\cvclo$ and the space
  of measured currents $\Curr$. They also showed in \cite[Theorem
  1.1]{KL:ZeroLength} that for $T \in \cvclo$ and $\mu \in \Curr$, $\la
  T, \mu \ra = 0$ if and only if $\mu$ is dual to $T$.

  Given two trees $T$ and $T'$, we say a map $h\from  T \to T'$ is
  \emph{alignment-preserving} if whenever $b \in T$ is contained in an arc
  $[a,c] \subset T$, then $h(b)$ is contained in the arc $[h(a),h(c)]$.

  \begin{thm}[\cite{CHL:NUE}] \label{thm:CHL}
    Let $T, T' \in \partial \CV$ be two trees with dense orbits. The
    following are equivalent: 
    \begin{itemize}
     \item $L(T) = L(T')$.
     \item There exists an $\free$-equivariant alignment-preserving
       bijection between $T$ and $T'$. 
    \end{itemize}
  \end{thm}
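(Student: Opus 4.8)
The plan is to prove both implications using the \emph{limit map} $\mathcal{Q}_T\colon\partial\free\to\widehat T$ of Coulbois--Hilion--Lustig, where $\widehat T$ is the closure of $T$ in the observers' topology. For a tree $T$ with dense orbits this map is $\free$-equivariant, continuous, and onto $\widehat T$, and the basic structure theorem of the dual-lamination theory identifies
\[
  L(T)\;=\;\{(\xi,\eta)\in\partial^2\free\;:\;\xi\neq\eta,\ \mathcal{Q}_T(\xi)=\mathcal{Q}_T(\eta)\}.
\]
Equivalently, the equivalence relation $\sim_T$ on $\partial\free$ defined by $\xi\sim_T\eta\iff\mathcal{Q}_T(\xi)=\mathcal{Q}_T(\eta)$ has off-diagonal part exactly $L(T)$, and $\widehat T$ is $\free$-equivariantly homeomorphic to the quotient $\partial\free/\!\sim_T$. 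Both the subset $T\subseteq\widehat T$ and the betweenness (alignment) relation on $T$ will have to be recovered from this quotient datum.

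\emph{Bijection $\Rightarrow L(T)=L(T')$.} Let $h\colon T\to T'$ be an $\free$-equivariant alignment-preserving bijection. First, $h$ is automatically alignment-preserving in both directions: if $h(b)\in[h(a),h(c)]$, then applying $h$ to the median of $a,b,c$ and comparing with the median of $h(a),h(b),h(c)$ (which equals $h(b)$ under the hypothesis) forces, by injectivity, $b\in[a,c]$. Hence $h$ is an isomorphism of the underlying median (pretree) structures, and since the observers' topology is defined purely from betweenness, $h$ is a homeomorphism for the observers' topologies and extends uniquely to an $\free$-equivariant homeomorphism $\widehat h\colon\widehat T\to\widehat{T'}$. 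Writing $\mathcal{Q}_T(\xi)$ as an observers'-topology limit $\lim_n g_n\cdot x_0$ for suitable $g_n\to\xi$ and using equivariance and continuity of $\widehat h$ gives $\widehat h\circ\mathcal{Q}_T=\mathcal{Q}_{T'}$ (naturality/uniqueness of the limit map). Therefore $\mathcal{Q}_T(\xi)=\mathcal{Q}_T(\eta)\iff\mathcal{Q}_{T'}(\xi)=\mathcal{Q}_{T'}(\eta)$, i.e. $L(T)=L(T')$.

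\emph{$L(T)=L(T')\Rightarrow$ bijection.} Suppose $L(T)=L(T')$. Then $\sim_T\,=\,\sim_{T'}$, so passing to quotients produces a canonical $\free$-equivariant homeomorphism $\bar h\colon\widehat T\to\widehat{T'}$ with $\bar h\circ\mathcal{Q}_T=\mathcal{Q}_{T'}$. It remains to show (i) $\bar h(T)=T'$ and (ii) $\bar h|_T$ is alignment-preserving. For (i) one characterizes $T$ inside $\widehat T$ by an intrinsic, $\free$-invariant, purely order/topological condition distinguishing the interior points of $T$ from the added points at infinity; since $\bar h$ is an equivariant homeomorphism, it carries this description of $T$ to that of $T'$. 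For (ii) the key is that betweenness on $T$ is itself encoded by $L(T)$: for pairwise $\sim_T$-inequivalent $\xi,\eta,\zeta$, one shows $\mathcal{Q}_T(\zeta)\in[\mathcal{Q}_T(\xi),\mathcal{Q}_T(\eta)]$ if and only if a condition on which further pairs built from limits of $\xi,\eta,\zeta$ belong to $L(T)$ (morally: the $\mathcal{Q}_T$-segment from $\xi$ to $\eta$ and the $\mathcal{Q}_T$-segments terminating at $\zeta$ must cross). As $\bar h$ intertwines the $\mathcal{Q}$-maps and preserves $L(T)=L(T')$, this characterization transfers, so $\bar h|_T$ preserves betweenness; equivariance is immediate. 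Together (i) and (ii) give the desired $\free$-equivariant alignment-preserving bijection $T\to T'$.

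The main obstacle is the reconstruction underlying (i) and (ii): recovering the $\R$-tree $T$ — as a set, with its betweenness relation and with its embedding $T\hookrightarrow\widehat T$ — from the single datum $L(T)$. This forces one into the fine structure of the limit map for dense-orbit trees: its surjectivity onto $\widehat T$, the description of the fibers $\mathcal{Q}_T^{-1}(x)$ and of the directions at each $x$ (the ``$\mathcal{Q}$-index''), and control of the degenerate configurations in which $\mathcal{Q}_T$ collapses large closed subsets of $\partial\free$ to a point, so that $\widehat T$ may fail to be locally connected. Making these reconstructions precise and equivariantly natural is exactly the technical content of \cite{CHL:NUE}.
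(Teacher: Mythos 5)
This theorem is quoted in the paper from \cite{CHL:NUE} without proof, so there is no in-paper argument to compare against; your proposal has to be judged as an attempted proof of the cited result itself. As such it is a faithful road map of the Coulbois--Hilion--Lustig strategy, but not a proof. The forward direction is essentially complete modulo the quoted machinery: the median argument showing that an injective alignment-preserving map between $\R$-trees is a pretree isomorphism (hence bi-alignment-preserving) is correct, and the passage to the observers' compactification together with the naturality $\widehat h\circ\mathcal{Q}_T=\mathcal{Q}_{T'}$ follows from uniqueness of continuous equivariant extensions --- \emph{granting} the identification of $L(T)$ with the off-diagonal part of the fiber relation of $\mathcal{Q}_T$, which is itself a substantial theorem of the CHL series that you correctly isolate as an input rather than prove.

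The genuine gap is in the converse. Steps (i) and (ii) --- singling out $T$ inside $\widehat T$ by an intrinsic $\free$-invariant condition, and expressing the betweenness relation $\mathcal{Q}_T(\zeta)\in[\mathcal{Q}_T(\xi),\mathcal{Q}_T(\eta)]$ purely in terms of membership of pairs in $L(T)$ --- are asserted, not carried out, and they constitute essentially the entire content of the implication. There is no soft reason such reconstructions exist: $\mathcal{Q}_T$ can collapse large closed subsets of $\partial\free$, $\widehat T$ need not be locally connected, and the set $\widehat T\setminus T$ of added points has no obvious order-theoretic signature; the "crossing of $\mathcal{Q}$-segments" criterion you gesture at is exactly the kind of statement that requires the fine fiber and index analysis of the limit map. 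Your closing paragraph concedes that these reconstructions are "exactly the technical content of \cite{CHL:NUE}", which is an accurate self-assessment: what you have written is a correct outline with the decisive steps deferred to the reference, and it should be presented as such rather than as a proof.
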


  \subsection{Length measures and non-uniquely ergometric trees}

  Since $\R$-trees need not be locally compact, classical measure theory is
  not well-suited for them. In \cite{Paulin:Habilitation}, a \emph{length
  measure} was introduced for $\R$-trees. See \cite{Guirardel:Dynamics} for
  details. 
  
  A \emph{length measure} on an $\free$-tree $T$ is a collection of finite
  Borel measures $\lambda_I$ for every compact interval $I$ in $T$ such
  that if $J \subset I$, then $\lambda_J = (\lambda_I)|_J$. We require the
  length measure to be invariant under the $\free$ action. The collection
  of the Lebesgue measures of the intervals of $T$ is $\free$-invariant,
  and this will be called the Lebesgue measure of $T$. A length measure
  $\lambda$ is \emph{non-atomic} or \emph{positive} if every $\lambda_I$ is
  non-atomic or positive. If every orbit is dense in some segment of $T$,
  then $T$ cannot have an invariant measure with atoms. Further, if $T$ is
  \emph{indecomposable}, that is, if for any pair of nondegenerate arcs $I$
  and $J$ in $T$, there exist $g_1,\ldots,g_m \in \free$, such that $I
  \subset \bigcup g_iJ$ and $g_i J \cap g_{i+1} J$ is nondegenerate, then
  every nonzero length measure is positive (in fact, the condition of
  mixing \cite{Guirardel:Dynamics} suffices).  
  
  Let $\D(T)$ be the cone of $\free$-invariant length measures on $T$, with
  projectivization $\P\D(T)$, i.e.\ the homothety classes of
  $\free$-invariant length measures on $T$. $\P\D(T)$ is a compact convex
  set and we will call its extremal points the \emph{ergodic length
  measures} on $T$. When $T$ has dense orbits there are at most $3n-4$ such
  measures for any $T$, see \cite[Corollary 5.2, Lemma
  5.3]{Guirardel:Dynamics} and $\D(T)$ is naturally a subset of $\partial
  cv_n$. In fact, 
  
  \begin{lemma}\cite{Guirardel:Dynamics}\label{lem:D(T)}
    If $T \in \cv$ is indecomposable then $\mathcal{D}(T)$ is in one-to-one
    correspondence with the set of isometry classes of $\free$-invariant
    metrics on $T$, denoted $X_T \subset \cv$.  
  \end{lemma}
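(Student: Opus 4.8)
The plan is to construct the bijection in both directions. Given $T \in \cv$ indecomposable (so $T$ is a minimal $\free$-tree with dense orbits, and by the discussion preceding the lemma every nonzero length measure on $T$ is positive and non-atomic), I would first send a length measure $\mu \in \D(T)$ to a new metric $d_\mu$ on the underlying topological tree of $T$. Concretely, for points $a,b \in T$ let $d_\mu(a,b) = \mu_{[a,b]}([a,b])$; the compatibility condition $\lambda_J = (\lambda_I)|_J$ for $J \subset I$ together with additivity of measures on disjoint intervals gives that $d_\mu$ satisfies the triangle equality along segments, hence is a path metric making $(T, d_\mu)$ an $\R$-tree. Since $\mu$ is $\free$-invariant, $\free$ acts on $(T,d_\mu)$ by isometries; positivity of $\mu$ ensures $d_\mu$ is a genuine metric (no two distinct points at distance zero) and that the action remains minimal with dense orbits, so $(T,d_\mu) \in \cv$. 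This defines a map $\D(T) \to X_T$.

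Next I would check this map is injective and surjective. For injectivity: if $d_\mu = d_{\mu'}$ as metrics, then $\mu_I$ and $\mu'_I$ agree on all subintervals of every interval $I$, and since a finite Borel measure on a segment is determined by its values on subintervals, $\mu = \mu'$. For surjectivity: given an $\free$-invariant metric $d'$ on the topological tree $T$ with $(T,d') \in \cv$, define for each interval $I$ the measure $\lambda'_I$ to be the Lebesgue (length) measure of the metric $d'$ restricted to $I$ — i.e. $\lambda'_I([a,b]) = d'(a,b)$ for subsegments. One must verify this is a well-defined $\free$-invariant length measure on the original tree $T$, which amounts to checking that an $\free$-invariant metric $d'$ on the topological tree induces the same topology (so that "interval of $T$" means the same thing for $d$ and $d'$); this is where indecomposability is used, via Guirardel's results, to see that such a change of metric does not change the underlying topological/order structure. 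Then $d_{\lambda'} = d'$ by construction.

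The main obstacle I anticipate is the surjectivity half, specifically the claim that any $\free$-invariant metric in $X_T$ arises from a length measure on the \emph{original} tree rather than from some genuinely different topological tree. One needs that the identity map $T \to (T,d')$ is a homeomorphism (equivalently, alignment-preserving in both directions), so that Borel sets and intervals are intrinsic; here the indecomposability of $T$ — which by Guirardel forces mixing-type behavior and rules out the kind of collapsing that changes the tree structure — is the key input. I would lean on \cite{Guirardel:Dynamics} (Corollary 5.2, Lemma 5.3 and the surrounding discussion) for precisely this point, and for the finite-dimensionality bound that makes $\D(T)$ and $X_T$ well-behaved convex sets. The remaining verifications (that $d_\mu$ is a metric, that the correspondence respects the $\free$-action and the cone/scaling structure) are routine once the topological identification is in hand.
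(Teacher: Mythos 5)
Your proposal is correct and matches the paper's argument: in one direction one sets $d_\lambda(x,y)=\lambda([x,y])$, with indecomposability (via positivity of nonzero length measures) guaranteeing this pseudo-metric is a genuine metric, and in the other direction one pulls back the Lebesgue measure of $T'\in X_T$ under the identity map. Your additional verifications (injectivity, preservation of the interval structure) are the routine points the paper leaves implicit.
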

  \begin{proof}
    Let $\lambda \in \mathcal{D}(T)$ be a length measure on $T$. Consider
    the pseudo-metric $d_{\lambda}$ on $T$ where $d_{\lambda}(x,y) =
    \lambda([x,y])$ for $x, y \in T$. In fact, since $T$ is indecomposable,
    $d_{\lambda}$ is a metric on $T$. 
    For the converse, let $T' \in X_T$. Then the pull back of Lebesgue
    measure on $T'$ under identity map $\id \from T \to T'$ gives a
    positive length measure on $T$. \qedhere 
  \end{proof}
  
  We say $T$ is \emph{uniquely ergometric} if there is only one
  projective class of length measures on $T$, which necessarily is the
  homothety class of the Lebesgue measure on $T$. It is called
  \emph{non-uniquely ergometric} otherwise.
  
  \subsection{Arational trees and the free factor complex}
  \label{sec:arational trees}

  For a tree $T \in \cvclo$ and a free factor $H$ of $\free$, let $T_H$
  denote the minimal $H$-invariant subtree of $T$ (this tree is unique
  unless $H$ fixes an arc). A tree $T \in \partial \cv$ is \emph{arational}
  if every proper free factor $H$ of $\free$ has a free and simplicial
  action on $T_H$. 
  By \cite{R:ReducingSystems} every arational tree is free
  and indecomposable or it is the dual tree to an arational measured
  lamination on a surface with one puncture. The arational trees of the
  first kind are either Levitt type or non-geometric. 

  Let $\AT \subset \partial \CV$ denote the set of arational trees with the
  subspace topology. Using \Cref{lem:D(T)}, define an equivalence relation
  $\sim$ on $\AT$ by ``forgetting the metric'', that is, $T \sim T'$ if $T'
  \in \P \D(T)$, and endow $\AT/\sim$ with the quotient topology. The
  following lemma is implicit in \cite{Guirardel:Dynamics} and we include a
  proof for completeness. 
  
  \begin{lemma}\label{lem:arational equivalence}
    Let $T, T'$ be arational trees. Then $T \sim T'$ if and only if $L(T) =
    L(T')$.  
  \end{lemma}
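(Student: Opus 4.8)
The plan is to prove the two implications separately, using the dual lamination as the bridge. For the forward direction, suppose $T\sim T'$, so that $T'\in\P\D(T)$; concretely, by \Cref{lem:D(T)} and the discussion preceding it, $T$ and $T'$ have the same underlying $\R$-tree (up to $\free$-equivariant homeomorphism) and differ only in the choice of $\free$-invariant metric coming from a length measure. I would first observe that an arational tree is free and indecomposable (or dual to an arational lamination on a once-punctured surface) by \cite{R:ReducingSystems}, and in either case it has dense orbits, so its dual lamination is governed by translation lengths of short elements. The key point is that the identity map $\id\from T\to T'$ is $\free$-equivariant and alignment-preserving (changing the metric by a length measure does not change the betweenness relation on $T$), so by \Cref{thm:CHL} we get $L(T)=L(T')$. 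Alternatively, and perhaps more directly, one checks that the condition $\norm{g}_T<\epsilon$ carries over: if $g_k$ is a sequence with $\norm{g_k}_T\to 0$ whose axes converge to a leaf, then since the two metrics are related by a length measure one can extract a subsequence along which $\norm{g_k}_{T'}$ also tends to $0$ (after rescaling), because the length measure assigns finite mass to any fixed arc and the axes are being squeezed into shorter and shorter arcs; this shows $L(T)\subseteq L(T')$ and symmetrically the reverse.

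For the converse, suppose $L(T)=L(T')$ with $T,T'$ both arational. By \Cref{thm:CHL} there is an $\free$-equivariant alignment-preserving bijection $h\from T\to T'$. The goal is to show $T'\in\P\D(T)$, i.e.\ that $h$ exhibits $T'$ as a remetrization of $T$ by a length measure. I would pull back the Lebesgue measure of $T'$ along $h$: for each compact interval $I\subset T$, the alignment-preserving bijection $h$ sends $I$ to an arc $h(I)\subset T'$ (here one uses that $h$ preserves betweenness, hence maps arcs to arcs), and setting $\lambda_I := h^*(\mathrm{Leb}_{h(I)})$ gives a consistent family of finite Borel measures satisfying the restriction compatibility $\lambda_J=(\lambda_I)|_J$ for $J\subset I$. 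The $\free$-equivariance of $h$ gives $\free$-invariance of $\lambda$, so $\lambda\in\D(T)$, and by construction the metric completion of $(T,d_\lambda)$ is $\free$-equivariantly isometric to $T'$, so $T'\in\P\D(T)$, i.e.\ $T\sim T'$.

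The main obstacle, I expect, is the measurability and consistency of the pulled-back family $\{\lambda_I\}$ in the converse direction: an alignment-preserving bijection need not be a homeomorphism (it can fail to be continuous, collapsing or expanding in subtle ways), so one must be careful that $h^*(\mathrm{Leb})$ genuinely defines a Borel measure on each interval and that these fit together. The way around this is to exploit indecomposability (or the surface case separately): for arational trees the length measure is automatically positive and non-atomic by the discussion after \Cref{lem:D(T)}, which rigidifies the situation and forces $h$ to be considerably better behaved on arcs than a general alignment-preserving bijection; in the surface case one can instead argue directly with measured laminations, where the correspondence between the dual lamination and the transverse measure class is classical. A secondary point to handle carefully is that $\P\D(T)$ is defined only up to homothety, so throughout one works projectively and need only match the length measures up to scale.
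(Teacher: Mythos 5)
Your proposal is correct and follows essentially the same route as the paper: the forward direction uses that the identity map is an $\free$-equivariant alignment-preserving bijection together with \Cref{thm:CHL}, and the converse uses \Cref{thm:CHL} to produce an alignment-preserving bijection and then pulls back the Lebesgue measure of $T'$ to obtain a length measure on $T$ whose associated metric is isometric to $T'$. The extra alternative argument and the cautionary remarks about measurability are not needed but do not detract from the proof.
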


  \begin{proof}

    If $T \sim T'$ then the identity map $\id \from T \to T'$ is an alignment preserving
    bijection. Therefore, by \Cref{thm:CHL}, $L(T) =
    L(T')$. 
   
    If $L(T) = L(T')$, then by \Cref{thm:CHL} there is an alignment
    preserving bijection $f \from T \to T'$. Pulling back the Lebesgue
    measure on $T'$ induces a length measure on $T$, and the corresponding
    metric $d_\mu$ on $T$ is isometric to $T'$, so $T' \sim T$.
    \qedhere

  \end{proof}

  The \emph{free factor complex} $\FreeF$ is a simplicial complex whose
  vertices are given by conjugacy classes of proper free factors of $\free$
  and a $k$-simplex is given by a nested chain $[A_0] \subset [A_1] \subset
  \cdots \subset [A_k]$. When the rank $n=2$ the definition is modified and
  an edge connects two conjugacy classes of rank 1 factors if they have
  complementary representatives. The free factor complex can be given a
  metric as follows: identify each simplex with a standard simplex and
  endow the resulting space with path metric. By result of
  \cite{BF:FreeFactorComplex}, the metric space $\FreeF$ is Gromov
  hyperbolic. The Gromov boundary of $\FreeF$ was identified with
  $\AT/\sim$ in \cite{BR:FFBound} and \cite{H}.

  There is a projection map $\pi\from \CV \to \FreeF$ defined as follows
  \cite[Section 3]{BF:FreeFactorComplex}: for $G \in \CV$, $\pi(G)$ is the
  collection of free factors given by the fundamental group of proper
  subgraphs of $G$ which are not forests. This map is coarsely well
  defined, that is, $\diam_{\FreeF}(\pi(G)) \le K$ for some universal $K$.
  Note that if $G, G'$ belong to the same open simplex of $\CV$, then
  $\pi(G)=\pi(G')$, so the projection of a simplex of $\CV$ has uniformly
  bounded diameter.

\section{Folding and unfolding sequences}

  \label{sec:sequences}

  In this section we introduce (un)folding sequences and review some
  work of Namazi-Pettet-Reynolds \cite{NPR}.
  
  A \emph{folding/unfolding sequence} is a sequence 
  \[
    \begin{tikzcd} 
      & G_a \arrow[r]  
      & \cdots \arrow{r}
      & G_{-1} \arrow{r}
      & G_0 \arrow{r}
      & G_1 \arrow{r}
      & \cdots \arrow[r] 
      & G_b
    \end{tikzcd}
  \] 
  of graphs, together with maps $f_i\from  G_i \to G_{i+1}$ such that for
  any $j \leq i$, $f_{i-1} \circ f_{i-2} \circ \cdots \circ f_j\from  G_j
  \to G_i$ is a change-of-marking morphism. Equivalently, a sequence as
  above is called a folding/unfolding sequence, if there exists a train
  track structure on each $G_i$ and $f_{i-1} \circ f_{i-2} \circ \cdots
  \circ f_j$ maps legal paths to legal paths. We allow the sequence to be
  infinite in one or both directions. We assume that a marking on $G_0$ has
  been specified, so a folding/unfolding sequence determines a sequence of
  open simplices in Outer space. 
  
  Let $Q_i$ be the transition matrix of $f_{i}$. A \emph{length measure}
  for a folding/unfolding sequence $(G_i)_{a \leq i \leq b}$ is a sequence
  $(\lambda_i)_{a \leq i \leq b}$, where $\lambda_i \in \R^{|EG_i|}$ is a
  length vector on $G_i$, and for $a \leq i < b$, 
  \[ \lambda_i  = Q_i^T\lambda_{i+1}.\]
  In this way, $f_i$ restricts to an local isometry on every edge of $G_i$.
  When $b < \infty$, a length vector on $G_b$ determines a length measure
  on the sequence. When the sequence is infinite in the forward direction
  we denote by $\D((G_i)_i)$ the space of length measures on $(G_i)_i$, and
  $\P\D((G_i)_i)$ its projectivization. Observe that the dimension of
  $\D((G_i)_i)$ is bounded by $\liminf_{i \to \infty}|EG_i|$. 
  
  A \emph{current} for a folding/unfolding sequence $(G_i)_{a \leq i \leq
  b}$ is a sequence $(\mu_i)_{a \leq i \leq b}$, where $\mu_i \in
  \R^{|EG_i|}$ is a length vector on $G_i$ (but thought of as a vector of
  thicknesses of edges), and for $a \leq i < b$, we require $$\mu_{i+1}  =
  Q_i \mu_i.$$  Likewise, when the sequence is infinite in the backward
  direction, we denote by $\C((G_i)_i)$ the space of currents on $(G_i)_i$,
  and $\P\C((G_i)_i)$ its projectivization. The dimension of $\C((G_i)_i)$
  is bounded by $\liminf_{i \to - \infty}|EG_i|$.

  \subsection{Isomorphism between length measures}
    
  In this section, we identify the space of length measures on a folding
  sequence with that of the limiting tree, when it is an arational tree. 
  
  Consider a folding sequence of marked graphs of rank $n$
  \[
    \begin{tikzcd} 
      G_0 \arrow{r}{f_1}
      & G_1 \arrow{r}
      & \cdots \cdots \arrow{r}{f_i}
      & G_i \arrow{r}{f_{i+1}}
      & \cdots \cdots.
    \end{tikzcd}
  \] 
  Let $\tilde G_i$ be the universal cover of $G_i$, and let $\tilde f_i$ be
  a lift of $f_i$. For any positive length measure $(\lambda_i)_i \in
  \D((G_i)_i)$, we can realize $(\tilde G_i, \tilde \lambda_i)_i$ as a
  sequence in $\cv$, which can be ``filled in'' by a folding path in $\cv$
  (see \cite{BF:FreeFactorComplex} for details on folding paths). In
  particular, $(\tilde G_i,\tilde \lambda_i)_i$ always converges to a point
  $T \in \partial \cv$. Furthermore, we have morphisms $h_i \from  \tilde
  G_i \to T$ such that $h_i = h_{i+1} \tilde f_{i+1}$. With respect to the
  length measure $\tilde \lambda_i$, $\tilde f_i$ and $h_i$ restrict to
  isometries on edges \cite[Lemma 7.6]{BR:FFBound}. 
  
  Let $(U_i)_i$ be the sequence of open simplices $\CV$ associated to the
  sequence $(G_i)_i$.  Recall the projection map $\pi \from  \CV \to
  \FreeF$ is coarsely well-defined on simplices of $\CV$. We will say the
  folding sequence $(G_i)_i$ \emph{converges to an arational tree} $T$ if
  $\pi(U_i)$ converges to $[T] \in \partial \FreeF$. 
  
  \begin{prop} \label{prop:IsoLength}
    
    Suppose a folding sequence $(G_i)_i$ converges to an arational tree
    $T$. Then there is a linear isomorphism between $\D((G_i)_i)$ and
    $\D(T)$.  

  \end{prop}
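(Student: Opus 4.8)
The plan is to build the isomorphism from the maps $h_i \colon \tilde G_i \to T$ that come with the folding sequence, and to check it is a linear bijection by exhibiting an inverse coming from the decomposition of $T$ along the images of edges. First I would observe that given a positive length measure $(\lambda_i)_i \in \D((G_i)_i)$, the discussion preceding the proposition produces a limiting tree in $\partial \cv$, and since the folding sequence converges to the arational tree $T$ in $\FreeF$ (hence $[T] \in \partial \FreeF$), this limit must be $T$ up to rescaling — indeed the simplex $U_i$ contains $(\tilde G_i, \tilde \lambda_i)$ and $\pi(U_i) \to [T]$, so the folding path filling in the sequence converges to $[T]$, and by the identification of $\partial \FreeF$ with $\AT/\!\sim$ from \cite{BR:FFBound, H} together with the fact that folding paths converge in $\CVclo$, the actual limit tree is a tree in the class of $T$, i.e.\ equipped with some length measure on $T$. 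This already suggests the map: send $(\lambda_i)_i$ to the length measure on $T$ obtained as the limit, equivalently, to the pushforward under the direct limit structure. More concretely, I would use the morphisms $h_i \colon \tilde G_i \to T$ (for a fixed reference metric $(\lambda_i^0)_i$ giving Lebesgue measure on $T$) which restrict to isometries on edges, and define, for an arbitrary length measure $(\lambda_i)_i$, a length measure $\mu$ on $T$ by declaring the $\lambda_i$-length of an edge $e$ of $\tilde G_i$ to be the $\mu$-measure of the arc $h_i(e)$; the compatibility $\lambda_i = Q_i^T \lambda_{i+1}$ together with $h_i = h_{i+1}\tilde f_{i+1}$ and the fact that $\tilde f_{i+1}$ sends each edge of $\tilde G_i$ isometrically onto an edge-path in $\tilde G_{i+1}$ makes this well-defined and additive on the arcs that arise, and one extends by the standard density/compatibility argument for length measures.

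Next I would check linearity, which is immediate from the definition (the assignment $(\lambda_i)_i \mapsto \mu$ is visibly linear in the $\lambda_i$, since $Q_i^T$ is linear and the arcs $h_i(e)$ do not depend on the measure). For injectivity: if $(\lambda_i)_i$ maps to the zero measure, then every edge of every $\tilde G_i$ has $\lambda_i$-length $0$, so $(\lambda_i)_i = 0$. For surjectivity, I would go the other way: given a length measure $\mu \in \D(T)$, restrict $\mu$ to the arcs $h_i(e)$ to get a length vector $\lambda_i$ on $G_i$; the relation $h_i = h_{i+1}\tilde f_{i+1}$ forces $\lambda_i = Q_i^T \lambda_{i+1}$, so $(\lambda_i)_i \in \D((G_i)_i)$, and by construction it maps back to $\mu$. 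Here I would want to use that $T$ is arational — hence, by \cite{R:ReducingSystems}, either free and indecomposable or dual to an arational lamination on a once-punctured surface — so that the arcs $h_i(e)$ genuinely "fill" $T$ in the sense needed: every compact arc of $T$ is covered, up to the $\free$-action, by finitely many translates of the $h_i(e)$ with overlaps, which is exactly indecomposability (and in the surface case the analogous statement for arational laminations). This is what guarantees that a length measure on $T$ is determined by, and can be freely prescribed on, the edge-arcs.

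The main obstacle I expect is precisely this last point: controlling how the arcs $h_i(\tilde G_i)$ sit inside $T$ well enough to see that the edge-length data is both necessary and sufficient to pin down a length measure. One has to rule out "extra" length measure concentrated on parts of $T$ not hit compatibly by the $h_i$, and to see that the projective limit of the metric graphs $(\tilde G_i, \tilde \lambda_i)$ really is $T$ and not some other tree in its $\sim$-class or some degeneration — this is where convergence to an arational tree (rather than an arbitrary tree in $\partial\cv$) is essential, since arationality via \Cref{lem:arational equivalence} ties together $L(T)$, the $\sim$-class, and the dynamics, and it is also what was used in \cite{BR:FFBound} to get the morphisms $h_i$ with the edge-isometry property in the first place. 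I would handle it by invoking indecomposability of $T$ (from \cite{R:ReducingSystems} in the free case, and the surface picture otherwise) to get the covering-by-translates property, combined with \cite[Lemma 7.6]{BR:FFBound} for the edge-isometry statement and \Cref{lem:D(T)}-style reasoning to translate between length measures and metrics; the remaining verifications (additivity, the $Q_i^T$-compatibility, and that the two constructions are mutually inverse) are then routine bookkeeping.
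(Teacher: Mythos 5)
Your proposal is correct and follows essentially the same route as the paper: the forward map sends $(\lambda_i)_i$ to the limit of $(\tilde G_i,\tilde\lambda_i)$ in $\partial\cv$, which by \cite{BR:FFBound} lies in the $\sim$-class of $T$ and hence is a length measure on $T$, while the inverse pulls a measure back along the morphisms $h_i$ using $h_i=h_{i+1}\tilde f_{i+1}$. Your alternative ``concrete'' description of the forward map (prescribing $\mu$ on the arcs $h_i(e)$) carries an existence question that the limit-tree formulation sidesteps, but since you also give the limit argument, the proof stands.
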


  \begin{proof}
    
    Fix a positive length measure $(\lambda_i)_i \in \D((G_i)_i)$ and let
    $T \in \partial \cv$ be the limiting tree of $(\tilde
    G_i,\tilde\lambda_i)$ with corresponding morphism $h_i \from  \tilde
    G_i \to T$. Recall from \Cref{sec:arational trees} that if $T$ is
    arational, then we can identify $\D(T)$ with the subspace of
    $\free$-metrics on $T$ in $\partial \cv$. We will let $\lambda \in
    \D(T)$ be a length measure, and $T_\lambda$ its image in $\partial
    \cv$.  
    
    By \cite{BR:FFBound}, if $\pi(U_i)$ converges to $[T''] \in \partial
    \FreeF$, then for any positive $(\lambda_i')_i \in \D((G_i)_i)$,
    $(\tilde{G}_i,\tilde \lambda_i')$ also converges to an arational tree
    $T' \in \partial \cv$, such that $[T''] = [T']=[T]$; in other words,
    $T' = T_{\lambda'}$ for some $\lambda' \in D(T)$. This gives a linear
    map $\D((G_i)_i) \to \D(T)$. 

    Conversely, for any positive length measure $\lambda' \in \D(T)$, we can
    use the morphism $h_i$ to pull back $\lambda'$ from $T$ to a length
    measure $\lambda_i'$ on $\tilde{G}_i$. The fact that $h_i = h_{i+1}
    \tilde f_{i+1}$ implies $(\lambda_i')_i \in \D((G_i)_i)$. Moreover, the
    sequence $(\tilde{G}_i,\tilde \lambda_i')_i$ converges to $T_{\lambda'}
    \in \cv$. This gives a linear map $\D(T) \to \D((G_i)_i)$ which is the
    inverse of $\D((G_i)_i) \to \D(T)$ defined above. This shows
    $\D((G_i)_i) \to \D(T)$ is an isomorphism. \qedhere
   \end{proof}

   \begin{remark}
     A more general statement of \Cref{prop:IsoLength} which doesn't
     involve the assumption that $T$ is arational can be found in
     \cite[Proposition 5.4]{NPR}, but we will not need such a general
     statement here.
   \end{remark}

  \subsection{Isomorphism between currents}

  In this section, we state an analogous result identifying the space of
  currents on an unfolding sequence with the space of currents of a legal
  lamination associated to a unfolding sequence. We record some definitions
  from \cite{NPR} first. 
  
  Consider an unfolding sequence of marked graphs of rank $n$
  \[
    \begin{tikzcd} 
      & \cdots \cdots
      \arrow{r}{f_{i+1}}
      & G_i \arrow{r}{f_i}
      & \cdots \cdots \arrow{r}{f_2}
      & G_1 \arrow{r}\arrow{r}{f_1}
      & G_0. 
    \end{tikzcd}
  \] 
  Denote the composition $F_i = f_1 \circ \cdots \circ f_i$. Let
  $\Omega^L_{\infty}(G_i)$ denote the set of bi-infinite legal paths in
  $G_i$. Define the \emph{legal lamination} of the unfolding sequence
  $(G_i)_i$ to be 
  \[\Lambda = \bigcap_{i} F_i(\Omega^L_{\infty}(G_i)) \subseteq
  \Omega_{\infty}^L(G_0).\]  Use the marking on $G_0$ to identify
  $\partial^2 \pi_1(G_0)$ with $\partial^2 \free$. The preimage, in
  $\partial^2 \free$, of the lift of $\Lambda$ to $\partial^2 \pi_1(G_0)$
  is a lamination $\tilde{\Lambda}$. We denote by $\C(\Lambda)$ the convex
  cone of currents supported on $\tilde{\Lambda}$, with projectivization
  $\P\C(\Lambda)$.

  An \emph{invariant sequence of subgraphs} is a sequence of nondegenerate
  (i.e.\ not forests) proper subgraphs $H_i \subset G_i$ such that $f_i$
  restricts to a morphism $H_i \to H_{i-1}$. We will need the following
  theorem from \cite{NPR}.
  
  \begin{thm} [Theorem 4.4 \cite{NPR}] \label{thm:NPR currents} 
    Given an unfolding sequence $(G_i)_{i \geq 0}$ without an invariant
    sequence of subgraphs and with legal lamination $\Lambda$, then there
    is a natural linear isomorphism between $\C((G_i)_i)$ and $\C(\Lambda)$.  
  \end{thm}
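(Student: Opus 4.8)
The plan is to build the two maps between $\C((G_i)_i)$ and $\C(\Lambda)$ explicitly and check they are mutually inverse linear maps. I would start with the map $\C(\Lambda) \to \C((G_i)_i)$. Given a current $\mu$ supported on $\tilde\Lambda$, and given an edge $e$ of $G_i$ with a chosen orientation, consider the cylinder set $\Cyl_i(e) \subseteq \partial^2\free$ of bi-infinite lines in $G_i$ that cross $e$ in that orientation (pulled back via the change-of-marking $F_i \from G_i \to G_0$ and the marking on $G_0$). Define $\mu_i(e) = \mu(\Cyl_i(e))$; this is well-defined and finite since $\mu$ is a Radon measure, and independent of orientation by the $\mathbb Z_2$-symmetry built into $\partial^2\free$. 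The compatibility relation $\mu_{i+1} = Q_i \mu_i$ should then follow from the fact that, since $f_i$ is a morphism sending legal paths to legal paths, the $f_i$-image of a legal line through $e \subset G_i$ crosses an edge $e' \subset G_{i-1}$ exactly $(Q_i)_{e' e}$ times — so $\Cyl_{i-1}(e')$ decomposes (up to a measure-zero set of lines that are not entirely legal, which carry no $\mu$-mass because $\mu$ is supported on $\tilde\Lambda$) as a disjoint union of $(Q_i)_{e'e}$ translates of pieces of the $\Cyl_i(e)$'s. Here the hypothesis that there is no invariant sequence of subgraphs is what guarantees that $\tilde\Lambda$ is "filling enough" in each $G_i$, i.e.\ that every edge of $G_i$ is crossed by some leaf of $\Lambda$, so that no information is lost.

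For the reverse map $\C((G_i)_i) \to \C(\Lambda)$, given a compatible sequence $(\mu_i)_i$ of edge-thickness vectors I would define a finitely-additive function on the algebra of cylinder sets of $\partial^2\free$ generated by finite legal paths: for a legal path $\gamma$ in $G_i$, set $\mu(\Cyl(\gamma))$ to be the appropriate sum of $\mu_i$-weights (a Markov-type formula, since $\mu_i$ records edge weights and one extends to paths using the compatibility as transition data). I would check finite additivity using the compatibility relations $\mu_{i+1} = Q_i \mu_i$ (subdividing a cylinder over a path in $G_i$ into cylinders over paths in $G_{i+1}$ that map onto it), then invoke the Carathéodory/Kolmogorov extension theorem to get a Borel measure, which is automatically $\free$-invariant because the $\Cyl(\gamma)$ weights only depend on the path $\gamma$ and not its placement in the Cayley graph. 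Its support is contained in $\bigcap_i F_i(\Omega^L_\infty(G_i)) = \tilde\Lambda$ by construction, since a line crossing an illegal turn of $G_i$ eventually gets zero weight. Linearity in $(\mu_i)_i$ is clear from the formula.

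Finally I would verify that the two constructions are inverse to each other: starting from $(\mu_i)_i$, building $\mu$, and then reading off $\mu(\Cyl_i(e))$ recovers $\mu_i(e)$ by the very definition of the extension; conversely, a measure $\mu$ on $\tilde\Lambda$ is determined by its values on the cylinder sets over finite legal paths (these generate the Borel $\sigma$-algebra of $\tilde\Lambda$), and those values are reconstructed from the edge weights $\mu_i(e) = \mu(\Cyl_i(e))$ via the same Markov formula, so $\mu \mapsto (\mu_i)_i \mapsto \mu$ is the identity. The main obstacle I expect is the careful bookkeeping in the additivity/extension step: one must show that the "legal cylinder" weights are consistent across all levels $i$ simultaneously, i.e.\ that refining a legal path $\gamma$ in $G_i$ to its legal $f_i^{-1}$-preimages in $G_{i+1}$ and summing the $\mu_{i+1}$-weights gives back the $\mu_i$-weight of $\gamma$, and that lines which are legal in $G_0$ but fail to be legal at some deeper level $G_i$ — hence not in $\tilde\Lambda$ — are assigned total mass zero in the limit. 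This is exactly where the compatibility $\mu_{i+1} = Q_i\mu_i$ and the no-invariant-subgraph hypothesis (ensuring the matrices $Q_i$ don't degenerate onto a proper sub-block forever) do the real work.
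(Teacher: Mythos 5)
First, a point of comparison: the paper does not prove this statement. It is imported verbatim as Theorem~4.4 of \cite{NPR} and used as a black box, so there is no in-paper argument to measure your sketch against. That said, your outline --- read off edge weights $\mu_i(e)=\mu(\Cyl_i(e))$ in one direction, and in the other direction build a premeasure on cylinders over finite legal paths from the weight sequence and extend --- is the natural strategy and is, in broad strokes, how the result is proved in \cite{NPR}.

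The substantive gap is in where you locate the work done by the no-invariant-subgraph hypothesis, which is also the one step your sketch asserts rather than proves. You say the hypothesis guarantees that every edge of $G_i$ is crossed by a leaf, ``so that no information is lost.'' That is not the right mechanism. Consider an unfolding sequence in which every $f_i$ is a homeomorphism of a fixed graph $G$ with all turns legal: every edge is crossed by leaves and $\tilde\Lambda$ is all of $\partial^2\free$, yet $\C((G_i)_i)\cong\R_{\ge 0}^{|EG|}$ is finite-dimensional while $\C(\Lambda)=\Curr$ is not, so the edge-weight map has an enormous kernel. (This sequence of course has invariant subgraphs, so it is consistent with the theorem --- but it shows that ``filling enough'' does not give injectivity.) The actual crux is that a current supported on $\tilde\Lambda$ must be determined by its single-edge weights at all levels: for a legal path $\gamma$ in $G_0$ one needs $\sum_e \#\{\text{occurrences of }\gamma\text{ in }F_j(e)\}\,\mu_j(e)\to\mu(\Cyl(\gamma))$ as $j\to\infty$, and the deficit at level $j$ is the mass of occurrences of $\gamma$ straddling the images of two or more edges of $G_j$. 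Showing this deficit tends to zero is exactly where the hypothesis enters (it forces edge images to grow along leaves so that breakpoints become sparse); your parenthetical about the $Q_i$ not ``degenerating onto a proper sub-block forever'' gestures at this but does not supply the argument. The remaining steps --- finite additivity from the compatibility relations, the Carath\'eodory extension, $\free$-invariance, and the two maps being mutually inverse once the above is established --- are routine as you describe them.
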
 

\section{Main setup}
  
  \label{sec:construction}

  In this section, we will construct an unfolding sequence $(\tau_i)_i$ and
  a folding sequence $(\tau_i')_i$ in $\CV[7]$ that intersect the same
  infinite set of simplices, which we will eventually use to show the
  existence of a non-uniquely ergodic and ergometric tree. The construction
  is done via a family of outer automorphisms. We will describe these
  automorphisms and then analyze the asymptotic behavior of their train
  track maps.

  \subsection{The automorphisms}

  Let $\free[7]=\la a,b,c,d,e,f,g\ra$. Denote by $\bar{x}$ the inverse of
  $x \in \free[7]$. First consider the map induced on the 3-petaled rose by
  the automorphism \[\theta \from  a \mapsto b, b \mapsto c, c \mapsto ca
  \in \Aut(\mathbb{F}_3)\] and the map induced by the inverse automorphism
  \[\vartheta \from  a \mapsto \bar{b}c, b \mapsto a, c \mapsto b.\] Using
  $\theta$ and $\vartheta$ to also denote the corresponding graph
  maps, and using the convention that $a$ also denotes the initial
  direction of the oriented edge $a$, while $\bar{a}$ denotes the
  terminal direction, we
  have 
  \[
    D\theta^3 \from  
    \begin{tikzcd}
      a\arrow[r] & c \arrow[out=-120,in=-60,loop] & b\arrow[l] & \bar{a} 
      \arrow[loop right] & \bar{b} \arrow[loop right]  & \bar{c}
    \arrow[loop right] \end{tikzcd} 
  \]

  \[
    D\vartheta^3 \from  
    \begin{tikzcd}
      a\arrow[r] & \bar{c} \arrow[loop right] & b\arrow[r] & \bar{a} \arrow[loop right] & c
      \arrow[r]  & \bar{b} \arrow[loop right]
    \end{tikzcd} 
  \]

  \begin{observation}\label{obs} 
    From the structure of the above maps, for $n\equiv 0$ mod 3, $D\theta^n
    = D\theta^3$ and $D\vartheta^n = D\vartheta^3$.
  \end{observation}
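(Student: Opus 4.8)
The plan is to verify the claim by a direct inspection of the direction maps $D\theta$ and $D\vartheta$. The key structural observation is that these direction maps, when restricted to the directions appearing in the displayed diagrams, are eventually periodic with period dividing $3$, and in fact once we reach the exponent $3$ the map stabilizes. First I would recall that $D\theta$ and $D\vartheta$ are defined on the set of oriented edges (directions) of the $3$-petaled rose, i.e.\ on $\{a,b,c,\bar a,\bar b,\bar c\}$, via $D\theta^k(e) = $ the first oriented edge of the edge path $\theta^k(e)$, and similarly for $\vartheta$. Since there are only finitely many directions, the sequence $D\theta, D\theta^2, D\theta^3,\dots$ of self-maps of this finite set is eventually periodic; the content of the observation is the precise statement that the period is (a divisor of) $3$ and that periodicity sets in by exponent $3$.

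The main step is to check this directly from the two displayed diagrams for $D\theta^3$ and $D\vartheta^3$. From the diagram for $D\theta^3$ one reads off that $D\theta^3$ sends $a\mapsto c$, $c\mapsto c$, $b\mapsto c$, and fixes each of $\bar a,\bar b,\bar c$; in particular $D\theta^3$ is idempotent on this set, i.e.\ $(D\theta^3)^2 = D\theta^3$. Consequently $D\theta^{3m} = D\theta^3$ for every $m\ge 1$, which is exactly the assertion for $n\equiv 0 \bmod 3$. The same argument applies verbatim to $\vartheta$: the diagram for $D\vartheta^3$ exhibits $D\vartheta^3$ as a permutation-with-sinks of the six directions (for instance $a\mapsto \bar c\mapsto \bar c$, $b\mapsto \bar a\mapsto \bar a$, $c\mapsto \bar b \mapsto \bar b$), and one checks it too is idempotent, so $D\vartheta^{3m} = D\vartheta^3$ for all $m\ge 1$. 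I would phrase both verifications as a single short computation, since the two diagrams are given explicitly.

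The one point requiring a little care — and the place I expect to be the main (minor) obstacle — is justifying that it suffices to track only the \emph{first} oriented edge of $\theta^{3}(e)$ rather than the whole reduced edge path: one must be sure that composing $\theta^3$ with itself does not create cancellation at the front of the word that would alter the initial direction. This is where the morphism property of the train track map for $\theta$ (it maps legal turns to legal turns, hence the relevant compositions are already reduced, and $D$ is multiplicative: $D(\theta^k\theta^\ell) = D\theta^k \circ D\theta^\ell$ on the image) is used. Granting that, the observation follows immediately: $D\theta^n = D\theta^{3\lfloor n/3\rfloor} = D\theta^3$ when $3 \mid n$, and likewise for $\vartheta$. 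I would keep the write-up to a few lines, as the computation is entirely mechanical once the diagrams are in hand.
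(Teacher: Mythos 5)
Your argument is correct and matches the paper's (implicit) justification: the Observation is proved in the paper simply by inspecting the two displayed diagrams, and your idempotency remark ($D\theta^3$ and $D\vartheta^3$ fix their images pointwise, hence $D\theta^{3m}=D\theta^3$) is exactly the content of ``from the structure of the above maps.'' Your cautionary point about multiplicativity of $D$ is fine and handled correctly, since $\theta$ and $\vartheta$ are train track maps so no cancellation occurs under iteration.
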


  \begin{lemma}\label{lem:vartheta train track}
    The map on the 3-petaled rose labeled $a, b, c$ induced by $\vartheta$
    is a train track map with respect to the train track structure with
    gates $\{a, \bar{c}\}, \{b, \bar{a}\}, \{c,\bar{b}\}$. Moreover, this
    train track map does not have any periodic INPs.

    The map on the 3-petaled rose labeled $a, b, c$ induced by $\theta$
    is also a train track map with respect to the train track structure with
    gates $\{a, b, c\}, \{\bar{a}\}, \{\bar{b}\}, \{\bar{c}\}$ and it has
    one periodic Nielsen path (see \cite[Example 3.4]{BF:OuterLimits}). 
  \end{lemma}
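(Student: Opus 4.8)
The statement has two parts: the claim about $\vartheta$ and the claim about $\theta$. I would prove each by directly checking the defining conditions for a train track map against the proposed gate structure, and then analyze (periodic) INPs.

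\emph{The $\vartheta$ part.} First I would compute the direction map $D\vartheta$ on the six oriented edges $a,\bar a, b, \bar b, c, \bar c$, reading off the first letter of $\vartheta(x)$ for each. From $\vartheta\colon a\mapsto \bar b c,\ b\mapsto a,\ c\mapsto b$ we get $D\vartheta(a)=\bar b$, $D\vartheta(b)=a$, $D\vartheta(c)=b$, and dually $D\vartheta(\bar a)=\bar c$ (the last letter of $\vartheta(a)=\bar b c$ is $c$, so the terminal direction maps to $\bar c$), $D\vartheta(\bar b)=\bar a$, $D\vartheta(\bar c)=\bar b$. To see this is a train track map I must check that $D\vartheta$ maps the proposed gates $\{a,\bar c\}$, $\{b,\bar a\}$, $\{c,\bar b\}$ into single gates, equivalently that it does not collapse two distinct gates together, equivalently that there are no illegal turns in the images $\vartheta(x)$ of edges. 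Concretely: $\vartheta(a)=\bar b c$ contains the turn $(b,c)$ — wait, the turn taken at the interior vertex is $(\overline{\bar b},c)=(b,c)$, which is legal since $b$ and $c$ lie in different gates; $\vartheta(b)=a$ and $\vartheta(c)=b$ are single edges with no turns. Then I would verify $D\vartheta$ respects the gate partition: $\{a,\bar c\}\mapsto\{\bar b,\bar b\}$, $\{b,\bar a\}\mapsto\{a,\bar c\}$, $\{c,\bar b\}\mapsto\{b,\bar a\}$ — in each case the image is contained in a single gate, so legal turns go to legal turns. Since $\vartheta$ is a homotopy equivalence and every vertex (there is one) has more than one gate, and edges map to legal paths, $\vartheta$ is a train track map. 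The equivalence relation generated does not merge these three gates (the $D\vartheta$-orbits of $a$ and $\bar c$ never coincide with those of $b$, etc., by inspecting the cyclic-ish action), so the gates are exactly as stated. For no periodic INPs: an INP has a single illegal turn and its two legal branches are mapped by some $\vartheta^k$ across themselves; using bounded cancellation (Lemma~\ref{lem:lose illegal turns iwip}) and the Perron--Frobenius stretch $\lambda>1$ for the primitive transition matrix, I would rule out a fixed illegal turn. The cleanest route is to observe that an INP would give a periodic turn under $D\vartheta$ that is illegal, then check the (short) list of $D\vartheta$-periodic turns and see none is illegal — in fact $D\vartheta$ permutes the gates, so no turn becomes illegal under iteration in a way supporting an INP, and one can also invoke that $\vartheta$ (equivalently $\theta$) is fully irreducible and non-geometric, hence has no periodic INPs. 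I expect the main obstacle here to be pinning down the no-periodic-INP claim rigorously rather than by appeal to general position; the honest argument is the pigeonhole/bounded-cancellation analysis of the previous lemma applied to a hypothetical INP, combined with the explicit short list of periodic directions.

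\emph{The $\theta$ part.} Again compute $D\theta$ from $\theta\colon a\mapsto b,\ b\mapsto c,\ c\mapsto ca$: $D\theta(a)=b$, $D\theta(b)=c$, $D\theta(c)=c$, and dually $D\theta(\bar a)=\bar b$, $D\theta(\bar b)=\bar c$, $D\theta(\bar c)=\bar a$ (the last letter of $\theta(c)=ca$ is $a$). Now $D\theta$ sends $a,b,c$ all eventually to $c$ (indeed $a\mapsto b\mapsto c\mapsto c$), so under the relation $e_1\sim e_2$ iff $(D\theta)^k e_1=(D\theta)^k e_2$ for some $k$, all of $a,b,c$ become identified: one gate $\{a,b,c\}$; whereas $\bar a,\bar b,\bar c$ are permuted cyclically by $D\theta$ and never collide, so they are three singleton gates $\{\bar a\},\{\bar b\},\{\bar c\}$. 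This is the stated structure. To confirm it is a train track map: the only turns occurring in images of edges come from $\theta(c)=ca$, giving the turn $(\bar c,a)$ at the interior vertex; $\bar c$ and $a$ are in different gates so this is legal, and one checks $D\theta$ carries legal turns to legal turns (the only way to make an illegal turn would be to land two directions in $\{a,b,c\}$, which does happen, so I must be careful: a turn $(\bar a,\bar b)$ is legal and maps to $(\bar b,\bar c)$, legal; a turn $(a,\bar b)$ is legal and maps to $(b,\bar c)$, legal; the potentially bad turns $(a,b),(a,c),(b,c)$ are already illegal, so there is nothing to check). Hence $\theta$ is a train track map. Finally, for the periodic Nielsen path, I would exhibit it explicitly — following \cite[Example 3.4]{BF:OuterLimits} the candidate is the path $\rho = \bar b \cdot$ (something), concretely one looks for a reduced path $\sigma$ with one illegal turn and $[\theta^k(\sigma)]=\sigma$; the natural guess is built from the fixed direction $c$ and the eigen-data of the transition matrix $\begin{pmatrix}0&0&1\\1&0&0\\0&1&1\end{pmatrix}$, and one checks directly that $[\theta(\sigma)]=\sigma$ (or $[\theta^3(\sigma)]=\sigma$). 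I would verify the cancellation in $[\theta(\sigma)]$ by hand. The main obstacle in this half is simply getting the right INP and doing the (finite, but fiddly) cancellation bookkeeping to confirm periodicity; since the reference already records this example, I would lean on it and just confirm the one computation.

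\emph{Assembly.} Having verified the gate structures make legal turns go to legal turns and edges go to legal paths in both cases, and having disposed of periodic INPs for $\vartheta$ and exhibited one for $\theta$, the lemma follows. The reader should note the asymmetry is exactly what one expects: $\vartheta=\theta^{-1}$ is being used on the ``unfolding'' side where we want a clean train track map with no INPs, while $\theta$ on the ``folding'' side is allowed its Nielsen path.
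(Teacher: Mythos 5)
Your direct verification of the train track claims is correct and is exactly what the paper leaves implicit: the computations of $D\vartheta$ and $D\theta$, the check that each proposed gate maps into a single gate and that no two gates merge, and the check that the single turn crossed by an image of an edge ($\{b,c\}$ for $\vartheta(a)=\bar{b}c$, $\{\bar{c},a\}$ for $\theta(c)=ca$) is legal, are all right. (Also note that once the gates are \emph{defined} by the relation $(Df)^k(e_1)=(Df)^k(e_2)$, legal turns automatically go to legal turns; the only things to check are that edges map to legal paths and that there are at least two gates.)

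The genuine gap is in the ``no periodic INPs for $\vartheta$'' step, where both shortcuts you propose fail. First, the illegal turn of an INP is \emph{not} a $Df$-periodic turn: writing an INP as $\bar\alpha\beta$ with $\alpha,\beta$ legal, the structure $f(\alpha)=\tau\alpha$, $f(\beta)=\tau\beta$ shows that the directions fixed (or periodic) under $Df$ are the ones at the \emph{endpoints} of the INP, while the illegal turn at the tip merely satisfies $Df(D\alpha)=Df(D\beta)$ and is consumed by cancellation. So inspecting $D\vartheta$-periodic turns constrains where an INP could start and end but cannot rule one out. Second, ``fully irreducible and non-geometric, hence no periodic INPs'' is not a valid implication --- the second half of this very lemma is a counterexample, since $\theta$ is the standard non-geometric iwip of \cite[Example 3.4]{BF:OuterLimits} and it \emph{does} have a periodic Nielsen path. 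The paper's actual argument is a finite enumeration: in the Perron--Frobenius metric every INP has length at most twice the volume of the graph, exactly one illegal turn, and endpoints at fixed points of the map (of which there are finitely many), so all candidates can be listed and checked; for \emph{periodic} INPs the period is bounded in terms of the rank by \cite{feighn-handel}, so it suffices to run this check for a bounded power. The authors carried this out with Coulbois's {\tt Sage} train track package. To close your proof you would need to perform (or cite) that enumeration rather than appeal to periodic turns or non-geometricity. Your treatment of the $\theta$ half (leaning on the reference for the explicit Nielsen path) matches what the paper does and is fine.
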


  \begin{proof}
    The train track structure on the rose induces a metric on the graph
    coming from Perron-Frobenius theory. Every INP has length at most twice
    the volume of the graph, one illegal turn, and the endpoints are fixed.
    Since there are only finitely many fixed points in $G$, it is easy to
    enumerate all such paths and check if they are Nielsen. For periodic
    INPs one knows that the period is bounded by a function of the rank of
    $\free$ \cite{feighn-handel}, so one can take a suitable power and
    check for INPs (though there are more efficient ways, see \cite{ilya}).
    Coulbois' train track package \cite{coulbois} for the mathematics
    software system {\tt Sage} \cite{sage1} computes periodic INPs of train
    track maps. \qedhere   
  \end{proof}
  
  Now let $\phi\in \Aut(\free[7])$ be the automorphism: $$a\mapsto
  b,b\mapsto c,c\mapsto ca,d\mapsto d,e\mapsto e,f\mapsto f,g\mapsto g$$
  and $\rho\in \Aut(\mathbb{F}_7)$ be the rotation by 4 clicks: $$a\mapsto
  e, b\mapsto f, c\mapsto g, d\mapsto a, e\mapsto b, f\mapsto c, g\mapsto
  d.$$ Thus $\phi$ is the extension of $\theta$ by identity, and $\rho$
  rotates the support of $\phi$ off itself. 

  \begin{lemma}\label{lem:phi train track}
    For any $r \ge 3$, the map on the $7$-petaled rose induced by $\phi_r
    = \rho \phi^r$ is a train track map with respect to the train track
    structure with gates $$\{a,b,c\},\{d,e,f\}$$ and 8 more gates
    consisting of single half edges. The transition matrix $M_r$ has block
    form
    \[
    \begin{pmatrix}
      0&I\\
      B^r&0
    \end{pmatrix}
    \]
    where $I$ is the $4\times 4$ identity matrix, and $B$ is the transition
    matrix of $\theta$:
    \[
    B=
    \begin{pmatrix}
      0&0&1\\
      1&0&0\\
      0&1&1
    \end{pmatrix}
    \]
  \end{lemma}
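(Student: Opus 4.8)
The plan is to verify the claim about $\phi_r = \rho\phi^r$ directly, exploiting the block structure coming from the fact that $\phi$ (an extension of $\theta$) acts on $\{a,b,c\}$ and trivially on $\{d,e,f,g\}$, while $\rho$ cyclically shifts the two blocks. First I would record the transition matrix of $\phi$ on the $7$-petaled rose: since $\phi$ is the identity on $d,e,f,g$ and $\theta$ on $a,b,c$, its transition matrix is the block-diagonal $\mathrm{diag}(B, I_4)$ where $B$ is the $3\times 3$ matrix of $\theta$ displayed in the statement (and $I_4$ is $4\times 4$). Hence the transition matrix of $\phi^r$ is $\mathrm{diag}(B^r, I_4)$. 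The automorphism $\rho$ is a permutation of the basis (rotation by $4$), so its transition matrix is a $7\times 7$ permutation matrix $P$; composing, $M_r = P \cdot \mathrm{diag}(B^r, I_4)$. Ordering the edges as $(d,e,f,g,a,b,c)$ — i.e. grouping the four $\rho$-image slots first — one checks that $P$ sends the $\{a,b,c,d\}$-block to the $\{e,f,g,a\}$ positions and vice versa, which after this reordering is exactly the block anti-diagonal form $\begin{pmatrix} 0 & I \\ B^r & 0\end{pmatrix}$ with $I$ the $4\times 4$ identity. This is a finite bookkeeping check with the explicit permutation.

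Next I would establish the train track property and the gate structure. Since $\vartheta = \theta^{-1}$ is a train track map with no periodic INPs (by \Cref{lem:vartheta train track}), its inverse $\theta$ is a train track map as well; more precisely, $\theta$ has the illegal turn structure with single-gate directions $\{\bar a\},\{\bar b\},\{\bar c\}$ and one big gate $\{a,b,c\}$, as recorded in \Cref{lem:vartheta train track} and visible from the $D\theta^3$ diagram in \Cref{obs}. The key point is that $\phi_r$ for $r\ge 3$ is a train track map on the $7$-rose with the stated gates $\{a,b,c\},\{d,e,f\}$ plus $8$ single-half-edge gates. To see this I would compute $D\phi_r = D\rho \circ D(\phi^r)$ on directions. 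On the $\{a,b,c\}$-side, $D(\phi^r)$ acts as $D\theta^r$, which for $r\ge 3$ (using \Cref{obs}, reducing $r$ mod $3$ but keeping $r\ge 3$ so the relevant word lengths have stabilized) identifies the directions $a,b,c$ into one gate and keeps $\bar a,\bar b,\bar c$ as singletons; on the $\{d,e,f,g\}$-side $D(\phi^r)$ is the identity. Then $D\rho$ relabels: the gate $\{a,b,c\}$ of $\phi^r$ is carried by $\rho$ to the set of directions $\{\rho(a),\rho(b),\rho(c)\} = \{e,f,g\}$, so the image rose has a gate $\{e,f,g\}$; the singleton directions $\bar a,\bar b,\bar c$ map to $\bar e,\bar f,\bar g$; and the directions $d,e,f,g,\bar d,\bar e,\bar f,\bar g$ coming from the identity block are permuted by $\rho$ to $a,b,c,d,\bar a,\bar b,\bar c,\bar d$. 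Sorting out these images, the gates at the single vertex of the $7$-rose are $\{a,b,c\}$, $\{d,e,f\}$, and the eight singletons $\{g\}$ and the seven $\{\bar x\}$ minus/plus adjustments — a short combinatorial count shows exactly the $2 + 8 = 10$ gates claimed. Finally, to confirm $\phi_r$ is a \emph{train track} map I would check that $D\phi_r$ maps each gate into a single gate (so legal turns go to legal turns) and that every edge's image is a legal path: the image of each $\phi$-fixed edge $d,e,f,g$ under $\rho$ is a single edge, hence legal; the image of $a,b,c$ is $\rho$ of a legal $\theta^r$-path, hence legal since $\theta$ (equivalently $\theta^r$, $r\ge 3$) is train track and $\rho$ is a simplicial automorphism preserving legality.

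The main obstacle I expect is the careful case-analysis of the gate structure after applying the rotation $\rho$ — keeping track of which of the $14$ directions land in the big gates versus the singletons, and in particular confirming that the two nontrivial gates are precisely $\{a,b,c\}$ and $\{d,e,f\}$ (and not, say, $\{e,f,g\}$), which forces the specific choice of $\rho$ as rotation by $4$ and the condition $r\ge 3$ (so that $D\theta^r = D\theta^3$ has fully stabilized by \Cref{obs}). Everything else — the block form of $M_r$ and the train track verification — follows from matrix bookkeeping and from \Cref{lem:vartheta train track} together with the observation that conjugating/composing a train track map by a graph automorphism that preserves the train track structure yields a train track map. I would close by noting that this computation can also be confirmed with Coulbois' train track package in {\tt Sage}, as in the proof of \Cref{lem:vartheta train track}.
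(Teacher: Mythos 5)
Your block-form computation for $M_r$ is essentially right (with the standard ordering $(a,\dots,g)$ and the non-square partition rows $=(4,3)$, columns $=(3,4)$, no reordering is even needed), and your reduction via \Cref{obs} to finitely many values of $r$ is exactly the reduction the paper makes --- its entire proof is ``check $\phi_3,\phi_4,\phi_5$ by hand or with {\tt Sage}.'' The gap is in your derivation of the gate structure. The gates of $\phi_r=\rho\phi^r$ are \emph{not} obtained by transporting the gates of $\phi^r$ by $\rho$: by definition $x\sim y$ iff $(D\phi_r)^k(x)=(D\phi_r)^k(y)$ for some $k\geq 1$, so one must iterate the composite $D\phi_r=D\rho\circ D\phi^r$, not push forward the equivalence classes of $D\phi^r$. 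Your intermediate claim that ``the image rose has a gate $\{e,f,g\}$'' is false for the structure the lemma asserts. Concretely, $D\phi_r$ sends $a,b,c\mapsto g$ and $d\mapsto a$, $e\mapsto b$, $f\mapsto c$, $g\mapsto d$; hence $d,e,f$ become identified only at the \emph{second} iterate (when $a,b,c$ all land on $g$), while $g$ itself travels the $3$-cycle $g\mapsto d\mapsto a\mapsto g$ out of phase with $e$'s orbit and never merges with it, so $e$ and $g$ lie in different gates. The gate $\{d,e,f\}$ thus arises from the interleaving of the rotation with the $\theta$-dynamics over two iterations, which a one-step ``relabel by $\rho$'' argument cannot see; the phrase ``minus/plus adjustments'' in your count is precisely where the argument breaks. (A secondary slip: being a train track map is not in general inherited by the inverse automorphism, so ``$\vartheta$ is train track, hence $\theta$ is too'' is not a valid inference --- though the paper's \Cref{lem:vartheta train track} does assert both facts separately, so you may simply cite it.)

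Once the gates are correctly identified as $\{a,b,c\}$, $\{d,e,f\}$, $\{g\}$, and the seven singletons $\{\bar{a}\},\dots,\{\bar{g}\}$, the rest of your outline does close up: with gates defined by the map itself, legal turns go to legal turns automatically (if $D\phi_r(x)\sim D\phi_r(y)$ then $x\sim y$), and each edge image is a legal path because $\phi_r(d),\dots,\phi_r(g)$ are single edges while every interior turn of $\phi_r(a),\phi_r(b),\phi_r(c)$ pairs a barred singleton direction with an unbarred one. So the fix is to replace the transported-gate step by a direct iteration of $D\phi_r$ on all $14$ directions (which, by \Cref{obs}, need only be done for $r=3,4,5$), or to fall back entirely on the finite computer check as the paper does.
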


  \begin{proof}
    By \Cref{obs}, we only have to check the lemma for $\phi_3,
    \phi_4, \phi_5$, which can be done by hand or using the train
    track package for {\tt Sage}. 
  \end{proof}

  \begin{lemma}\label{lem:psi train track}
    For any $r \ge 3$ and $r \equiv 0$ mod 3, the map on the $7$-petaled rose induced by
    $\psi_r = (\rho\phi^r)^{-1}$ is a train track map with respect to the train
    track structure with gates
    $$\{a,e,\bar{g}\},\{b, \bar{d}\},\{c, \bar{b}\},\{d,
    \bar{c}\},\{f,\bar{e}\},\{g,\bar{f}\},\{ \bar{a}\}$$
    The transition matrix $N_r$ has block form
    \[
    \begin{pmatrix}
      0&C^r\\
      I&0
    \end{pmatrix}
    \]
    where $I$ is the $4\times 4$ identity matrix, and $C$ is the transition
    matrix of $\vartheta$:
    \[
    C=
    \begin{pmatrix}
      0&1&0\\
      1&0&1\\
      1&0&0
    \end{pmatrix}
    \]
  \end{lemma}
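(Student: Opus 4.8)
The plan is to verify \Cref{lem:psi train track} by the same computer-assisted strategy already used for \Cref{lem:phi train track}, but first reducing the claim to a finite check via \Cref{obs}. Since $\psi_r = (\rho\phi^r)^{-1} = \phi^{-r}\rho^{-1} = \vartheta^r\rho^{-1}$ (where $\vartheta$ is extended by identity on $d,e,f,g$ and $\rho^{-1}$ rotates back by $4$ clicks), and since by \Cref{obs} the direction map $D\vartheta^n$ depends only on $n \bmod 3$, the train track structure induced by $\psi_r$ on the $7$-petaled rose depends only on $r \bmod 3$ once $r \ge 3$. Thus for $r \equiv 0 \bmod 3$ it suffices to check the single representative $r = 3$, i.e.\ to verify that $\psi_3 = \vartheta^3\rho^{-1}$ is a train track map with the stated gates; this can be done by hand or with Coulbois' {\tt Sage} train track package, exactly as in \Cref{lem:phi train track}.

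First I would compute the graph map $\psi_3$ on the rose labeled $a,\dots,g$ explicitly: apply $\rho^{-1}$ (which sends $a\mapsto d$, $b\mapsto e$, $c\mapsto f$, $d\mapsto g$, $e\mapsto a$, $f\mapsto b$, $g\mapsto c$) and then $\vartheta^3$ (which acts as the third power of $a\mapsto\bar b c$, $b\mapsto a$, $c\mapsto b$ on the first three petals and as the identity on $d,e,f,g$). From this word map one reads off the transition matrix and checks it has the asserted block form $\begin{pmatrix}0 & C^r\\ I & 0\end{pmatrix}$ with $C$ the transition matrix of $\vartheta$ — the block structure comes from the fact that $\rho^{-1}$ permutes the two blocks of petals while $\vartheta^3$ acts by $C^3$ on one block. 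Then I would compute the induced direction map $D\psi_3$, identify its gates (the equivalence classes under the relation $e_1 \sim e_2$ iff $(D\psi_3)^k e_1 = (D\psi_3)^k e_2$ for some $k$), and confirm they are exactly the ten gates listed, noting in particular that $\bar a$ forms a gate by itself.

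The substantive content is then the train track condition: that $\psi_3$ maps legal turns to legal turns, equivalently that every edge's image is a legal edge path and $\psi_3$ carries the gate partition forward correctly. This reduces to a finite check on the (finitely many) illegal turns, verifying that the image path of each edge never crosses a turn with both directions in the same gate. As in \Cref{lem:phi train track}, I would note that by \Cref{obs} it is enough to do this for the single map $\psi_3$, and defer the actual verification to the {\tt Sage} package \cite{coulbois}, which mechanically computes gates, checks the train track property, and (as in \Cref{lem:vartheta train track}) could also confirm the absence of periodic INPs if needed — though the statement of \Cref{lem:psi train track} does not require that.

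The main obstacle is not conceptual but bookkeeping: correctly composing $\vartheta^3$ with $\rho^{-1}$ on seven generators without sign or indexing errors, and then carefully extracting the direction map and its gate classes from the resulting words. Once the graph map is written down correctly, the block form of $N_r$ is immediate and the train track verification is a routine finite computation of the kind the {\tt Sage} package is designed to handle, so I expect no genuine difficulty beyond that initial careful computation.
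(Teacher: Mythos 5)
Your proposal is correct and follows the paper's proof exactly: the paper likewise invokes \Cref{obs} to reduce to the single case $\psi_3$ and then defers the verification to a hand computation or Coulbois' {\tt Sage} train track package. The additional detail you give (decomposing $\psi_3 = \vartheta^3\rho^{-1}$, reading off the block form of $N_r$, and checking gates and legality of edge images) is just an expansion of that same finite check.
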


  \begin{proof}
    By \Cref{obs}, we only have to check the lemma for $\psi_3$, which can
    be done by hand or using the train track package for {\tt Sage}.
  \end{proof}

  \subsection{Asymptotics of transition matrices}

  \label{sec:Matrices}
  Let $\theta$, $\vartheta$, $\phi_r$, $\psi_r$ be the maps defined in the
  last section. We now analyze the behavior of the transition matrices
  $M_r$ and $N_r$ for $\phi_r$ and $\psi_r$ respectively. 
  
  \begin{lemma}\label{lem:Y}
    Let $B$ be the transition matrix for $\theta$, with Perron-Frobenius
    eigenvalue $\lambda_B$. There exists a constant $\kappa_B>0$ such
    that if $r,s-r\to\infty$, then $$\frac 1{\kappa_B \lambda_B^s}
    M_r M_s\to Y$$ where $Y$ is an idempotent matrix of the form 
    \[Y = \begin{pmatrix} 
     u & p u & q u & 0 & 0 & 0 & 0 
    \end{pmatrix} 
    \text{ with }  
    u = 
    \begin{pmatrix} 
    0, u_1, u_2, u_3, 0, 0, 0 \end{pmatrix}^T 
    \text{ and } 
    p, q > 0,\]  
    and 
    $\begin{pmatrix} u_1,u_2,u_3\end{pmatrix}^T$ 
    is a Perron-Frobenius eigenvector of $B$.   
  \end{lemma}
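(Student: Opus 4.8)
The plan is to work with the block form of the transition matrices from \Cref{lem:phi train track}. Since $M_r = \begin{pmatrix} 0 & I \\ B^r & 0 \end{pmatrix}$, a direct block multiplication gives
\[
M_r M_s = \begin{pmatrix} 0 & I \\ B^r & 0 \end{pmatrix}\begin{pmatrix} 0 & I \\ B^s & 0 \end{pmatrix} = \begin{pmatrix} B^s & 0 \\ 0 & B^r \end{pmatrix}.
\]
Wait --- this lands in the $4\times 4$ block structure, but $B$ is $3\times 3$, so the ``$I$'' and ``$0$'' in \Cref{lem:phi train track} are not literally $4\times4$ blocks acting on all seven coordinates; the support of $\phi$ is the first three basis elements and the last four are permuted by $\rho$. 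So first I would fix the precise indexing: write coordinates so that $B^r$ acts on the $\{a,b,c\}$-block and the permutation $\rho$ (of order dividing the appropriate number) shuffles the remaining coordinates. With the bookkeeping straight, $M_rM_s$ is block-diagonal-ish with a $B^s$ appearing in the slot that feeds back into the $\{a,b,c\}$ coordinates and a $B^r$ in a slot that feeds into coordinates killed by the limiting projection. This is the source of the asymmetry $p,q>0$ but with $u$ supported only on coordinates $2,3,4$ (i.e.\ $u_1,u_2,u_3$ in the $\{a,b,c\}$ slot).

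Next I would invoke Perron--Frobenius for the primitive matrix $B$: there is a simple leading eigenvalue $\lambda_B>1$ with positive left and right eigenvectors $v^T, u$, normalized so $v^T u = 1$, and $\lambda_B^{-s} B^s \to u v^T$ as $s\to\infty$. The hypothesis $r\to\infty$ and $s-r\to\infty$ guarantees that both $B^r$ and $B^{s-r}$ are individually in the Perron regime, so any product of the form $B^{a} X B^{b}$ with $a,b\to\infty$ and $X$ fixed converges (after rescaling by $\lambda_B^{a+b}$) to $u\,(v^T X u)\,v^T$, a rank-one matrix. I would expand $M_rM_s$ entrywise in terms of such products --- each nonzero block of $M_rM_s$, after the block multiplication, is either $B^s$ or $B^r$ (times a permutation matrix coming from $\rho$), and both $r\to\infty$ and $s\to\infty$. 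Rescaling by $\lambda_B^{-s}$ sends the $B^s$ blocks to rank-one limits $u v^T$ and sends the $B^r$ blocks to $0$, since $\lambda_B^{r-s}\to 0$. That is exactly why the last four columns of $Y$ vanish and why the surviving part has the column-proportional structure $\begin{pmatrix} u & pu & qu & 0 & \cdots\end{pmatrix}$: the single eigenvector $u$ comes out on the left, and the entries of $v^T$ (reindexed through $\rho$) supply the scalars $1, p, q$ up to the overall normalizing constant $\kappa_B$.

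Then I would check the remaining assertions. Idempotence: $Y$ has rank one as a linear map onto the line spanned by $u$; writing $Y = u\, w^T$ for the appropriate row vector $w^T$ (namely $v^T$ precomposed with the projection reading off the $\{a,b,c\}$-coordinates, scaled so that $w^T u = 1$ after dividing by $\kappa_B$), one gets $Y^2 = u (w^T u) w^T = Y$, which also pins down the value of $\kappa_B$ as the normalizing constant making $w^T u = 1$. Positivity of $p,q$: these are ratios of entries of the positive Perron eigenvector $v^T$ of $B$ (the left eigenvector), hence strictly positive --- I would just identify which entries, tracking the permutation $\rho$. That $(u_1,u_2,u_3)^T$ is a Perron--Frobenius eigenvector of $B$ is immediate from $\lambda_B^{-s}B^s \to u v^T$.

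The main obstacle is purely organizational: getting the $3$-versus-$4$ versus $7$ indexing right so that the block computation $M_rM_s$ is stated correctly, and then checking that the off-diagonal feedback (the $B^r$ block) really does sit in columns that are rescaled to zero by the factor $\lambda_B^{-s}$ rather than $\lambda_B^{-r}$. Once the bookkeeping is fixed, the convergence is a routine application of the Perron--Frobenius limit $\lambda_B^{-k}B^k \to uv^T$ together with $\lambda_B^{r-s}\to 0$; there is no analytic subtlety beyond that, and the idempotency and positivity claims fall out of the rank-one structure and the positivity of the Perron eigenvectors. (A sharper, uniform-in-the-tail version of this convergence, needed later for infinite products, is deferred to \Cref{sec:appendix}.)
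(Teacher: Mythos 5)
Your proposal is correct and follows essentially the same route as the paper: write out the block structure of $M_rM_s$ explicitly, apply the Perron--Frobenius limit $\lambda_B^{-s}B^s \to P$ (rank one) to the surviving block, kill the $B^r$ block (and the stray $1$ in position $(1,7)$) using $\lambda_B^{r-s}\to 0$, and choose $\kappa_B$ so that the resulting rank-one limit becomes idempotent. The only caveat is the bookkeeping you already flagged: the nonzero blocks of $M_rM_s$ are $B^s$ in rows $2$--$4$, columns $1$--$3$, and $B^r$ in rows $5$--$7$, columns $4$--$6$ (so coordinates $2,3,4$ are the $\{b,c,d\}$ slot, not $\{a,b,c\}$), and $\kappa_B$ comes out as the partial pairing $px_1+qx_2>0$ because the row and column supports of the rank-one limit overlap only in coordinates $2$ and $3$.
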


  \begin{proof}  
    There exists a Perron-Frobenius eigenvector $x=(x_1,x_2,x_3)^T$ for $B$
    and constants $p, q > 0$ such that 
    \[ 
    P = \lim_{s \to \infty} \frac{B^s}{\lambda_B^s} =
    \begin{pmatrix} x & px & qx \end{pmatrix}.
    \] 
    We have
    \[
      M_rM_s
      =
      \begin{pmatrix}
        \begin{matrix}
          0 & 0 & 0
        \end{matrix}
        & \rvline & 
        \begin{matrix}
          0 & 0 & 0
        \end{matrix}  
        & \rvline & 1 \\
        \hline
        B^s
        & \rvline & \bigzero & \rvline & 
        \begin{matrix} 
         0 \\ 0 \\ 0 
        \end{matrix}
        \\
        \hline
        \bigzero & \rvline & B^r & \rvline &  
        \begin{matrix}
          0 \\ 0 \\ 0
        \end{matrix} 
      \end{pmatrix}
      \quad
      \Longrightarrow
      \quad
      \frac{1}{\lambda_B^s} M_rM_s
      \to 
      \begin{pmatrix}
        \begin{matrix}
          0 & 0 & 0
        \end{matrix}
        & \rvline & 
        \begin{matrix}
          0 & 0 & 0
        \end{matrix}  
        & \rvline & 0 \\
        \hline
        P 
        & \rvline & \bigzero & \rvline & 
        \begin{matrix} 
         0 \\ 0 \\ 0 
        \end{matrix}
        \\
        \hline
        \bigzero & \rvline & \bigzero & \rvline &  
        \begin{matrix}
          0 \\ 0 \\ 0
        \end{matrix} 
      \end{pmatrix} 
    \]
    The square of the limiting matrix above has a nonzero block where $P$
    is of the form $$(px_1+qx_2)P,$$ and zero elsewhere, so we set
    \[ \kappa_B=px_1+qx_2 \qquad \text{and} \qquad (u_1,u_2,u_3)^T =
    \frac{1}{\kappa_B} (x_1,x_2,x_3)^T. \qedhere \]
  \end{proof}

  We have a similar statement for the matrices $N_r$.

  \begin{lemma}\label{lem:Z}

    Let $C$ be the transition matrix for $\vartheta = \theta^{-1}$, with
    Perron-Frobenius eigenvalue $\lambda_C$. There exists a constant
    $\kappa_C>0$ such that if $r, s-r\to\infty$, then
    $$\frac 1{\kappa_C \lambda_C^s} N_s N_r \to Z,$$ where $Z$ is an idempotent
    matrix of the form 
    \[
      Z = 
      \begin{pmatrix} 
        0 & v & p v & q v & 0 & 0 & 0 
      \end{pmatrix} 
      \text{ with }  
      v = 
      \begin{pmatrix}  
        v_1 , v_2 , v_3 , 0 , 0 , 0 , 0
      \end{pmatrix}^T 
      \text{ and } 
      p, q > 0,
    \] 
    and $(v_1,v_2,v_3)^T$ is a Perron-Frobenius eigenvector of $C$.
  \end{lemma}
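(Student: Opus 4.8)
The plan is to mimic the proof of \Cref{lem:Y} almost verbatim, with the roles of left- and right-multiplication swapped and $B$ replaced by $C$. First I would record the block computation for the product $N_sN_r$. Since
\[
N_s = \begin{pmatrix} 0 & C^s \\ I & 0 \end{pmatrix},
\qquad
N_r = \begin{pmatrix} 0 & C^r \\ I & 0 \end{pmatrix},
\]
we get
\[
N_sN_r = \begin{pmatrix} C^s & 0 \\ 0 & C^r \end{pmatrix},
\]
so that only the block $C^s$ needs normalizing as $s\to\infty$, the block $C^r$ dies after dividing by $\lambda_C^s$ since $r\le s$ and $s-r\to\infty$. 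Here I am using \Cref{lem:psi train track} to identify the block shape of $N_r$, and the fact that $C$ is primitive (it is the transition matrix of the train track map $\vartheta$, which has a primitive transition matrix by Perron--Frobenius theory, since $\vartheta$ is fully irreducible on three letters).

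Next I would invoke Perron--Frobenius for $C$: there is a positive Perron--Frobenius eigenvector $w=(w_1,w_2,w_3)^T$ and constants $p,q>0$ with
\[
\lim_{s\to\infty}\frac{C^s}{\lambda_C^s}
= \begin{pmatrix} w & pw & qw\end{pmatrix} =: P_C,
\]
the columns being proportional because $C^s/\lambda_C^s$ converges to a rank-one matrix whose column span is the Perron eigenline. (The precise shape $\begin{pmatrix} w & pw & qw\end{pmatrix}$ — rather than $w\,y^T$ for the left eigenvector $y$ — is just a renaming; one sets $(p,q)$ to be the ratios of the entries of $y$.) Taking $r,s-r\to\infty$ therefore gives
\[
\frac{1}{\lambda_C^s} N_sN_r \longrightarrow
\begin{pmatrix} P_C & 0 \\ 0 & 0 \end{pmatrix}
\]
in the $7\times 7$ block form (three rows/columns for the $P_C$ block, then four zero rows/columns), where the $0$ in the bottom-right corner is $4\times 4$. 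Writing this out in the $7$-coordinate basis $a,b,c,d,e,f,g$, the nonzero entries sit in the $a,b,c$ rows and columns, which is exactly the claimed shape with $v=(v_1,v_2,v_3,0,0,0,0)^T$ up to the normalizing constant.

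Finally, to get the idempotent $Z$ I would square the limiting matrix. Since
\[
\begin{pmatrix} P_C & 0 \\ 0 & 0 \end{pmatrix}^2
= \begin{pmatrix} P_C^2 & 0 \\ 0 & 0 \end{pmatrix},
\]
and $P_C^2 = \begin{pmatrix} w & pw & qw\end{pmatrix}^2$; a direct multiplication shows $P_C^2 = (w_1 + pw_2 + qw_3)\,P_C$ — wait, one must be careful which entries appear: multiplying $\begin{pmatrix} w & pw & qw\end{pmatrix}\begin{pmatrix} w & pw & qw\end{pmatrix}$, the $(i,j)$ entry is $w_i\cdot(\text{$j$-th scalar})\cdot(w_1 + p w_2 + q w_3)$ only if the first column of the right factor is $w=(w_1,w_2,w_3)^T$, giving the common factor $w_1+pw_2+qw_3$ from the middle contraction — so $P_C^2 = (w_1+pw_2+qw_3)P_C$ (this is the same bookkeeping as the $px_1+qx_2$ computation in \Cref{lem:Y}, and the exact combination is immaterial; what matters is that it is a positive scalar times $P_C$). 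Set $\kappa_C$ equal to this positive scalar and $(v_1,v_2,v_3)^T = \kappa_C^{-1}(w_1,w_2,w_3)^T$; then $\tfrac{1}{\kappa_C\lambda_C^s}N_sN_r \to Z$ with $Z$ of the stated form, and $Z^2 = Z$ because the normalization was chosen precisely to cancel the scalar arising from squaring. The only mild subtlety — the ``main obstacle'', though it is minor — is bookkeeping the block positions correctly: $N_r$ has the $C^r$ block in the top-right (unlike $M_r$, whose $B^r$ block is in the bottom-left), so the surviving block after normalization lands in a different corner, and one must track that it corresponds to the $a,b,c$ coordinates rather than $d,e,f,g$; this matches the gate structure in \Cref{lem:psi train track}, where $\vartheta$ acts on $a,b,c$.
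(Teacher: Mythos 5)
Your overall strategy (normalize by $\lambda_C^s$, take the Perron--Frobenius limit of the surviving block, square the limit to identify the idempotent and the constant $\kappa_C$) is the same as the paper's, but your block computation of $N_sN_r$ is wrong, and the error propagates to a final matrix that does not have the form asserted in the lemma. The identity
\[
\begin{pmatrix} 0 & A \\ I & 0\end{pmatrix}\begin{pmatrix} 0 & B \\ I & 0\end{pmatrix}
= \begin{pmatrix} A & 0 \\ 0 & B\end{pmatrix}
\]
requires the column partition of the left factor to match the row partition of the right factor. Here $C^r$ is $3\times3$ while $I$ is $4\times4$, so $N_s$ has column partition $(4,3)$ while $N_r$ has row partition $(3,4)$; these do not match, and blockwise multiplication in this form is invalid. (A quick sanity check: your answer $\operatorname{diag}(C^s,C^r)$ would be a $6\times6$ matrix.) Computing column by column --- $N_r e_j=e_{j+3}$ for $j=1,\dots,4$, and $N_r e_{4+j}$ is the $j$-th column of $C^r$ placed in rows $1$ through $3$ --- one finds
\[
N_sN_r=
\begin{pmatrix}
0 & C^s & 0\\
0 & 0 & C^r\\
1 & 0 & 0
\end{pmatrix}
\]
with row partition $(3,3,1)$ and column partition $(1,3,3)$: the surviving block $C^s$ occupies rows $1$--$3$ but columns $2$--$4$, not columns $1$--$3$. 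This is exactly the transpose shape of $M_rM_s$ from \Cref{lem:Y}, which is all the paper's proof records.

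Consequently the correctly normalized limit is $\begin{pmatrix} 0 & v & pv & qv & 0 & 0 & 0\end{pmatrix}$ with $v$ supported on the first three coordinates, matching the lemma, whereas your limiting matrix has its nonzero columns in positions $1,2,3$ and is \emph{not} of the claimed form, despite your assertion that ``the nonzero entries sit in the $a,b,c$ rows and columns, which is exactly the claimed shape.'' Relatedly, your contraction $\kappa_C=w_1+pw_2+qw_3$ is the one for the wrong column placement; for the correct placement the idempotency scalar is $w_2+pw_3$ (only the positivity matters, but it signals the bookkeeping slip). You correctly identified the block positions as the main point requiring care and then got them wrong. The fix is local: replace the product by the displayed one above, after which the rest of your argument (PF convergence of $C^s/\lambda_C^s$, vanishing of $C^r/\lambda_C^s$ and of the corner entry $1$ since $s-r\to\infty$, squaring to determine $\kappa_C$) goes through as written.
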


  \begin{proof}
    We observe that the matrix $N_s N_r$ has shape that is the transpose of
    the matrix in \Cref{lem:Y}, with powers of the PF matrix $C$
    forming the nonzero blocks: 
    \[
      N_sN_r=
    \begin{pmatrix}
      \begin{matrix}
        0\\
        0\\
        0
      \end{matrix}
      & \rvline & C^s & \rvline & \bigzero \\
      \hline
      \begin{matrix}
        0\\
        0\\
        0
      \end{matrix}
      & \rvline & \bigzero & \rvline & C^r \\
      \hline
      1 & \rvline &
      \begin{matrix}
        0 & 0 & 0
      \end{matrix}
      & \rvline &
      \begin{matrix}
        0 & 0 & 0
      \end{matrix}
    \end{pmatrix}
    \qedhere
    \]
  \end{proof}

  For future reference, we also record the following. 
    Let $P = \lim_{r
  \to \infty} B^r/\lambda_B^r$ and $Q = \lim_{r \to \infty}
  C^r/\lambda_C^r$. Set 
  \[
    M_\infty = 
    \lim_{r \to \infty} \frac{M_r}{\lambda^r_B} 
    =\begin{pmatrix}
    \begin{matrix}
      0
    \end{matrix}
    & \rvline & 0\\
    \hline
    P & \rvline & 0
    \end{pmatrix}
    \qquad \text{and} \qquad
    N_\infty = 
    \lim_{r \to \infty} \frac{N_r}{\lambda^r_C} 
    =\begin{pmatrix}
    \begin{matrix}
      0
    \end{matrix}
    & \rvline & Q\\
    \hline
    0 & \rvline & 0
    \end{pmatrix}
    \]

  \begin{lemma} \label{lem:MYMZ}
    
    There are $p,q,r,s > 0$ such that
    \[
    M_\infty Y = \begin{pmatrix} 
     y & py & qy & 0 & 0 & 0 & 0 
    \end{pmatrix} \text{ with }  
    y = 
    \begin{pmatrix} 
    0, 0, 0, 0, y_1, y_2, y_3 
    \end{pmatrix}^T,
    \]
    $(y_1,y_2,y_3)^T$ is a Perron-Frobenius eigenvector of $B$; and
    \[
      ZN_\infty = \begin{pmatrix} 
       0 & 0 & 0 & 0 & z & rz & sz 
      \end{pmatrix} 
      \text{ with }  
      z = 
      \begin{pmatrix}  
      z_1, z_2, z_3,  0 , 0 , 0 , 0 
      \end{pmatrix}^T, 
    \]
  and  $(z_1,z_2,z_3)^T$ is a Perron-Frobeninus eigenvector of $C$.
  \end{lemma}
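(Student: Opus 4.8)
The plan is to prove this by a direct computation, reading off the rank-one structure of the two limit matrices. First I would record the shape of $M_\infty$: writing $P=\begin{pmatrix} x & px & qx\end{pmatrix}$ for the Perron projection of $B$ as in the proof of \Cref{lem:Y} (so $x=(x_1,x_2,x_3)^T$ is a Perron--Frobenius eigenvector of $B$ and $p,q>0$), the matrix $M_\infty$ has its only nonzero entries in rows $5,6,7$ and columns $1,2,3$, where it equals $P$. Since $P$ is rank one with strictly positive left eigenvector, $M_\infty$ is rank one with image the line spanned by $(0,0,0,0,x_1,x_2,x_3)^T$, and $M_\infty$ reads only the first three coordinates of any input vector. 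By \Cref{lem:Y} the columns of $Y$ are $u,\,pu,\,qu,\,0,\,0,\,0,\,0$ with $u=(0,u_1,u_2,u_3,0,0,0)^T$, so the columns of $M_\infty Y$ are $M_\infty u,\,p\,M_\infty u,\,q\,M_\infty u$ followed by four zeros. Since $M_\infty u$ depends only on $(0,u_1,u_2)$ and $P(0,u_1,u_2)^T=(pu_1+qu_2)\,x$ is a positive multiple of $x$ (as $u_1,u_2,p,q>0$), the vector $M_\infty u$ has the form $(0,0,0,0,y_1,y_2,y_3)^T$ with $(y_1,y_2,y_3)^T$ a Perron--Frobenius eigenvector of $B$; hence $M_\infty Y=\begin{pmatrix} M_\infty u & p\,M_\infty u & q\,M_\infty u & 0 & 0 & 0 & 0\end{pmatrix}$, which is exactly the claimed form (and $pu_1+qu_2=1$ by the normalization $\kappa_B=px_1+qx_2$, $u_i=x_i/\kappa_B$ from that proof).

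The identity for $ZN_\infty$ follows by the mirror-image argument. Write $Q=\begin{pmatrix} q_1 w & q_2 w & q_3 w\end{pmatrix}$ for the Perron projection of $C$, with $w=(w_1,w_2,w_3)^T$ a Perron--Frobenius eigenvector and $q_1,q_2,q_3>0$. Then $N_\infty$ has nonzero entries only in rows $1,2,3$ and columns $5,6,7$, equal there to $Q$, so its $j$-th column for $j=5,6,7$ is $q_{j-4}\,(w_1,w_2,w_3,0,0,0,0)^T$ and all its other columns vanish. By \Cref{lem:Z} the columns of $Z$ are $0,\,v,\,pv,\,qv,\,0,\,0,\,0$ with $v=(v_1,v_2,v_3,0,0,0,0)^T$, whence $Z(w_1,w_2,w_3,0,0,0,0)^T=(w_2+pw_3)\,v$, a positive multiple of $v$. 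Therefore the $5$th, $6$th and $7$th columns of $ZN_\infty$ are $q_1(w_2+pw_3)v$, $q_2(w_2+pw_3)v$, $q_3(w_2+pw_3)v$, and the remaining columns vanish; taking $z:=q_1(w_2+pw_3)v$, $r:=q_2/q_1$ and $s:=q_3/q_1$ gives the stated form, with $(z_1,z_2,z_3)^T$ a Perron--Frobenius eigenvector of $C$ and $r,s>0$.

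I do not expect a genuine obstacle here; the computation is essentially bookkeeping. The one point requiring care is the coordinate placement forced by the cyclic/shift block structure of $M_r$ and $N_r$: one must correctly locate the overlap of the support of $M_\infty$ with that of $Y$ (respectively of $Z$ with $N_\infty$) and verify that restricting the Perron projection $P$ (resp.\ $Q$) to the two overlapping coordinates does not annihilate it. This is automatic, since a Perron projection has a strictly positive left (resp.\ right) eigenvector, and this same positivity is exactly what makes the new coefficients $p,q,r,s$ strictly positive.
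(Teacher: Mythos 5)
Your computation is correct, and it is exactly the intended verification: the paper records \Cref{lem:MYMZ} without proof as a routine consequence of \Cref{lem:Y}, \Cref{lem:Z}, and the displayed block forms of $M_\infty$ and $N_\infty$, and your column-by-column evaluation (including the positivity of $pu_1+qu_2$ and $w_2+pw_3$ coming from the strict positivity of the Perron--Frobenius eigenvectors) supplies precisely that bookkeeping.
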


  \subsection{Folding and unfolding sequence}
  
  Consider a sequence of positive integers $(r_i)_{i \ge 1}$ and the
  sequence of automorphisms $\phi_{r_i}$, with transition matrix $M_{r_i}$
  and $\phi_{r_i}^{-1} = \psi_{r_i}$ with transition matrix $N_{r_i}$. Let
  $\tau_i \to \tau_{i-1}$ (resp. $\tau_{i-1}' \to \tau_i'$) be the train
  track map induced on the rose by $\phi_{r_i}$ (resp. $\psi_{r_i}$) as
  given by \Cref{lem:phi train track} (resp. \Cref{lem:psi train
  track}). Thus we have an unfolding sequence 
  \[
    \begin{tikzcd} 
      \cdots 
      \arrow[r]  
      & \tau_{i+1} \arrow[r, "\phi_{r_{i+1}}"] 
      & \tau_i \arrow[r, "\phi_{r_i}"] 
      & \tau_{r_{i-1}} \arrow[r] 
      & \cdots \arrow[r, "\phi_{r_3}"] 
      & \tau_2 \arrow[r, "\phi_{r_2}"] 
      & \tau_1 \arrow[r, "\phi_{r_1}"] 
      & \tau_0,
    \end{tikzcd}
  \]
  and a folding sequence 
  \[
    \begin{tikzcd} 
      \cdots  
      & \tau_{i+1}' \arrow[l]  
      & \tau_i' \arrow{l}[above,pos=.4]{\psi_{r_{i+1}}} 
      & \tau_{i-1}' \arrow{l}[above,pos=.4]{\psi_{r_i}} 
      & \cdots \arrow[l] 
      & \tau_2' \arrow{l}[above,pos=.4]{\psi_{r_3}}  
      & \tau_1' \arrow{l}[above,pos=.4]{\psi_{r_2}} 
      & \tau_0' \arrow{l}[above,pos=.4]{\psi_{r_1}}. 
    \end{tikzcd}
  \]

  Let $\Phi_i = \phi_{r_1} \circ \ldots \circ \phi_{r_i}$ and $\Phi_i^{-1}
  = \Psi_i = \psi_{r_i} \circ \ldots \circ \psi_{r_1}$. Here, $\tau_0$ is a
  rose with petals labeled by elements in $\{a,b,c,d,e,f,g\}$ and hence for
  $i \ge 1$, $\tau_i$ is a rose labeled by $\{\Phi_i(a), \ldots,
  \Phi_i(g)\}$. Also, $\tau_0'$ is a rose labeled by
  $\{a,b,c,d,e,f,g\}$, so $\tau_i'$ is also a rose labeled by
  $\{\Phi_i(a), \ldots, \Phi_i(g)\}$. Thus, for every $i\geq 0$,
  $\tau_i$ and $\tau_i'$ have the same marking but different train track
  structures. In other words, they belong to the same simplex in $\CV[7]$.  

  The next lemma studies the behavior of illegal turns in a path along the
  folding sequence. This will be used in the proof of
  \Cref{prop:nongeometric} to show that the limit tree of the folding
  sequence is non-geometric. 

  \begin{lemma}\label{lem:lose illegal turns}
    
    Let $(r_i)_{i \ge 1}$ be strictly increasing such that $r_i \equiv 0$
    mod 3 and $r_1 > R$, where $R$ is the constant from \Cref{lem:lose
    illegal turns iwip}. Let $(\tau_i')_i$ be the corresponding folding
    sequence. Then for any edge path $\beta$ in $\tau'_j$ with at least one
    illegal turn, the number of illegal turns in
    $[\psi_{r_{j+3}}\psi_{r_{j+2}}\psi_{r_{j+1}}(\beta)]$ is less than the
    number of illegal turns in $\beta$. 

  \end{lemma}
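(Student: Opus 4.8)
The plan is to reduce the claim, via the block structure of the relevant train track maps, to an application of \Cref{lem:lose illegal turns iwip} for the map $\vartheta$ on the $3$-petaled rose. Recall from \Cref{lem:vartheta train track} that $\vartheta$ (equivalently $\theta^{-1}$) is a train track map on the rose labeled $a,b,c$ with \emph{no periodic INPs}; hence by the remark following \Cref{lem:lose illegal turns iwip}, there is a constant $R$ so that for every edge path $\gamma$ in that rose which is not legal, $[\vartheta^R(\gamma)]$ has strictly fewer illegal turns than $\gamma$. The hypothesis $r_1 > R$ and $r_i \equiv 0 \bmod 3$ is exactly what lets us invoke this: by \Cref{obs}, $D\vartheta^{r_i} = D\vartheta^3$, so the legal structure determined by $\psi_{r_i}$ is independent of $i$ and matches that of $\vartheta^3$, and since $r_i > R$ we may "absorb" an extra $R$ powers of $\vartheta$ inside the $r_i$-th power.

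The key step is to unpack what $\psi_{r_{j+3}}\psi_{r_{j+2}}\psi_{r_{j+1}}$ does combinatorially. By \Cref{lem:psi train track}, the transition matrix of $\psi_r$ is $\begin{pmatrix} 0 & C^r \\ I & 0 \end{pmatrix}$; composing three of them gives a matrix whose nonzero block is a product $C^{r_{j+1}} C^{r_{j+2}} C^{r_{j+3}}$ (up to reordering dictated by the permutation parts $\rho^{-1}$), acting on the three edges $a,b,c$ of the subrose, after the first $\rho^{-1}$'s have rotated the path so that its "active" part lives on $\{a,b,c\}$. Concretely: the first application $\psi_{r_{j+1}} = (\rho\phi^{r_{j+1}})^{-1} = \phi^{-r_{j+1}}\rho^{-1}$ rotates $\beta$ by $\rho^{-1}$ and then applies $\phi^{-r_{j+1}}$, which restricts to $\vartheta^{r_{j+1}}$ on the subrose $\langle a,b,c\rangle$ and is the identity elsewhere. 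So after one step the path has its combinatorics concentrated on $\{a,b,c\}$, and applying $\vartheta^{r_{j+1}}$ there — with $r_{j+1} \ge r_1 > R$ — strictly decreases the number of illegal turns of that restricted path, unless it was already legal. Two more applications of the same type only further (weakly) decrease the count, and the $\rho^{-1}$ rotations are simplicial homeomorphisms of the rose that preserve the number of illegal turns. One must check that "illegal turn in $\beta$ as a path in $\tau'_j$" corresponds correctly under $\rho^{-1}$ and the inclusion of the subrose to "illegal turn with respect to the $\vartheta$-structure": this is where the explicit gate lists in \Cref{lem:psi train track} and \Cref{lem:vartheta train track} get matched up.

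So the order of steps is: (1) record that $\psi_r = \phi^{-r}\rho^{-1}$ and that $\phi^{-r}$ is $\vartheta^r$ on $\langle a,b,c\rangle$ and the identity on $\langle d,e,f,g\rangle$; (2) observe the $\rho^{-1}$'s are graph isomorphisms of the $7$-petaled rose, hence preserve illegal-turn counts, and track how they move the support of $\beta$; (3) after the first $\psi_{r_{j+1}}$, the path's illegal turns all lie in the subrose, and apply \Cref{lem:lose illegal turns iwip} to $\vartheta$ with the constant $R < r_1 \le r_{j+1}$, writing $\vartheta^{r_{j+1}} = \vartheta^{r_{j+1}-R}\circ\vartheta^R$ and using that $\vartheta$ maps legal paths to legal paths so that the initial $\vartheta^{r_{j+1}-R}$ cannot create new illegal turns; (4) note the remaining two compositions $\psi_{r_{j+2}},\psi_{r_{j+3}}$ are again (rotation)$\circ$(power of $\vartheta$ on the subrose), so they do not increase the number of illegal turns. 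Combining, the net effect is a strict decrease, as claimed. The main obstacle is bookkeeping step (1)–(2): one has to verify carefully that $\rho$ "rotates the support of $\phi$ off itself" in the precise sense that after the conjugation the illegal turns of $\beta$ genuinely sit inside the subrose on which the next power of $\vartheta$ acts, rather than straddling the two blocks — here the disjointness of the two gate clusters $\{a,b,c\}$ and $\{d,e,f\}$ in \Cref{lem:phi train track}, and the matching structure of the gates in \Cref{lem:psi train track}, is what makes it work, and checking this may again require appeal to the explicit small cases ($\psi_3$) as in the lemmas above.
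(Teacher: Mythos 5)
Your overall strategy --- reduce to \Cref{lem:lose illegal turns iwip} applied to $\vartheta$, which has no periodic INPs, and use $r_i \equiv 0 \bmod 3$ together with $r_i > R$ to absorb $R$ iterates of $\vartheta$ into a single $\psi_{r_i}$ --- is the same as the paper's. But the combinatorial bookkeeping you flag as ``the main obstacle'' is exactly where your argument breaks. You assert that after one application of $\psi_{r_{j+1}} = \phi^{-r_{j+1}}\rho^{-1}$ ``the path has its combinatorics concentrated on $\{a,b,c\}$'' and that this first step already strictly decreases the illegal-turn count, with the remaining two maps only needed for weak monotonicity. This is false. Since $\rho^{-1}$ is a bijection on the letters, the image path still uses all seven letters; only the directions $e,f,g,\bar e,\bar f,\bar g$ are ``active'' (sent into long $\vartheta$-words), while $a,b,c,d$ are merely relabeled to $d,e,f,g$. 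An illegal turn both of whose directions are passive simply gets carried along with no cancellation: for example $\beta = \bar a\, e$ has the single illegal turn $\{a,e\}$, and $[\psi_{r_{j+1}}(\beta)] = \bar d\, \vartheta^{r_{j+1}}(a)$ still has exactly one illegal turn, namely $\{d,\bar c\}$, which is again passive; only after a second application does it become the active turn $\{g,\bar f\}$, and only the third application resolves it. So a single $\psi_{r_{j+1}}$ does not decrease the count, and your claimed order of events (strict decrease at step 1, weak decrease at steps 2--3) cannot be repaired without the missing case analysis.

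What the paper does instead, and what your proof needs, is an explicit tracking of the images of each of the eight illegal turns of $\tau_j'$ under successive $\psi$'s, showing that every illegal turn becomes, within at most two applications, a turn $\{x,y\}$ with $x,y \in \{e,f,g,\bar e,\bar f,\bar g\}$; only then is the turn governed by the $\vartheta$-dynamics, and one further application of $\psi_{r}$ with $r > R$ invokes \Cref{lem:lose illegal turns iwip} (with no periodic INPs) to force a strict decrease. This two-step delay is precisely why the lemma composes three maps rather than one. A secondary, more minor issue: your claim that the $\rho^{-1}$ rotations ``preserve the number of illegal turns'' needs justification, since the train track structures on $\tau_j'$ and $\tau_{j+1}'$ are not carried to each other by $\rho^{-1}$ alone; in the paper this is subsumed in the explicit gate lists of \Cref{lem:psi train track} and the turn-by-turn computation.
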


  \begin{proof} 
  
    By \Cref{lem:psi train track}, the illegal turns in $\tau'_{j}$ are 
    $$\{a,e\}, \{a,\bar{g}\},\{e,\bar{g}\},
    \{b,\bar{d}\},\{c,\bar{b}\},\{d,\bar{c}\},\{f,\bar{e}\},\{g,\bar{f}\} $$
    and we have
    \begin{flalign*} 
      \{a,e\} & \xrightarrow{\psi_{r_{j+1}}} \{d,\bar{c}\}
      \xrightarrow{\psi_{r_{j+2}}} \{g,\bar{f}\} \\
      \{a,\bar{g}\} & \xrightarrow{\psi_{r_{j+1}}} \{d,\bar{c}\}
      \xrightarrow{\psi_{r_{j+2}}} \{g,\bar{f}\} \\
      \{b,\bar{d}\} & \xrightarrow{\psi_{r_{j+1}}} \{e,\bar{g}\}  \\
      \{c,\bar{b}\} & \xrightarrow{\psi_{r_{j+1}}} \{f,\bar{e}\}  \\
      \{d,\bar{c}\} & \xrightarrow{\psi_{r_{j+1}}} \{g,\bar{f}\} 
    \end{flalign*}
    
    Thus, for any illegal edge path $\beta \subset \tau_{j}'$, one
    of $\beta, \psi_{r_{j+1}}(\beta), \psi_{r_{j+2}}\psi_{r_{j+1}}(\beta)$
    has an illegal turn $\{x, y\}$ where $x, y \in \{e, f, g,
    \bar{e}, \bar{f}, \bar{g}\}$. 

    Consider the automorphism $\vartheta$ and corresponding train track map
    $h \from  \mathrm{R}_3 \to \mathrm{R}_3$ as in \Cref{lem:vartheta train
    track}. Then $h$ does not have any periodic INPs. Since $R$ is the
    constant from \Cref{lem:lose illegal turns iwip}, we get that 
    one of $[\psi_{r_{j+1}}(\beta)], [\psi_{r_{j+2}}\psi_{r_{j+1}}(\beta)],
    [\psi_{r_{j+3}}\psi_{r_{j+2}}\psi_{r_{j+1}}(\beta)]$ has fewer illegal
    turns than $\beta$. \qedhere 
  
  \end{proof}

\section{Limiting tree of folding sequence}
  
  \label{sec:arational tree}

  In this section, we will show that for appropriate choices of $(r_i)_i$,
  the projection of the folding sequences $(\tau_i')_i$ to the free factor
  complex $\FreeF[7]$ is a quasi-geodesic and hence converges to the
  equivalence class of an arational tree. We will also show that this tree
  is non-geometric.

  \subsection{Sequence of free factors}
  
  Given a sequence $(r_i)_{i \ge 1}$, recall that $\Phi_i =
  \phi_{r_1}\phi_{r_1}\cdots\phi_{r_i}$, where $\phi_r = \rho \phi^r$. For
  convenience, also set $\Phi_0 = \id$. We have the folding sequence 
  \[
    \begin{tikzcd} 
      \cdots  
      & \tau_{i+1}' \arrow[l]  
      & \tau_i' \arrow{l}[above,pos=.4]{\psi_{r_{i+1}}} 
      & \tau_{i-1}' \arrow{l}[above,pos=.4]{\psi_{r_i}} 
      & \cdots \arrow[l] 
      & \tau_2' \arrow{l}[above,pos=.4]{\psi_{r_3}}  
      & \tau_1' \arrow{l}[above,pos=.4]{\psi_{r_2}} 
      & \tau_0' \arrow{l}[above,pos=.4]{\psi_{r_1}}. 
    \end{tikzcd}
  \]
  where $\tau_i'$ is a rose labeled by $\{ \Phi_i(a),\cdots,\Phi_i(g)\}$,
  and $\psi_r = \phi_r^{-1}$. From
  the markings, we can associate $\tau_i'$ to an open simplex $U_i$ in
  $\CV[7]$. Consider a sequence of free factors $A_i \in \pi(U_i)$, where
  $\pi \from  \CV[7] \to \FreeF[7]$. For an appropriate sequence of $(r_i)_i$,
  we will see that $(A_i)_i$ is a quasi-geodesic (with infinite diameter).
  The key will be \Cref{lem:consecutive} which is the main goal of
  this section. 

  We now consider the following explicit sequence of free factors. Let $A_0
  = \la d,e,f\ra$ be the free factor in $\free[7]$, and define $$ A_i:=
  \Phi_i (A_0) = \la \Phi_i(d), \Phi_i(e), \Phi_i(f)\ra.$$ Note that for any
  $r,s,t >0$, the following holds:
  \begin{equation}\label{eqn:ff1} 
    \begin{aligned} 
      A_0 &= \la d, e, f \ra \\ 
      A_1 &= \phi_r(A_0) = \la a, b, c \ra \\ 
      A_2 &= \phi_s\phi_r(A_0) = \la e, f, g \ra \\ 
      A_3 &= \phi_t\phi_s\phi_r(A_0) = \la b,c,d \ra 
    \end{aligned} 
  \end{equation} 
  Thus, for any sequence $(r_i)_i$,
  \begin{equation}\label{eqn:ff2} 
    A_i = \Phi_i(A_0) = \Phi_{i-1}(A_1) = \Phi_{i-2}(A_2) =
    \Phi_{i-3}(A_3). 
  \end{equation}

  We say two free factors $A$ and $A'$ are \emph{disjoint} if (possibly
  after conjugating) $\free=A*A'*B$ for a (possibly trivial) free factor
  $B$, and $A'$ is \emph{compatible} with $A$ if it either contains $A$ (up
  to conjugation) or is disjoint from $A$. 

  \begin{lemma}\label{lem:i-j}

    For any sequence $(r_i)_{i \ge 1}$, if $|i-j|=1$, then $A_i, A_j$ are
    disjoint; and if $|i-j| = 2$ or $3$, then they are distinct and not
    disjoint. 

  \end{lemma}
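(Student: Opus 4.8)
The plan is to reduce everything to the four explicit free factors $A_0 = \la d,e,f\ra$, $A_1 = \la a,b,c\ra$, $A_2 = \la e,f,g\ra$, $A_3 = \la b,c,d\ra$ listed in \eqref{eqn:ff1}, using the translation identity \eqref{eqn:ff2} which says $A_i$ and $A_j$ are simultaneously the $\Phi_{\min(i,j)}$-images of $A_{|i-j|}$ and... actually more carefully: if $|i-j|=k\le 3$, then $A_i=\Phi_{\min(i,j)}(A_{k})$ and $A_j=\Phi_{\min(i,j)}(A_0)$ (after reindexing, applying \eqref{eqn:ff2} twice). Since disjointness and the property of being distinct/non-disjoint are invariant under applying the automorphism $\Phi_{\min(i,j)}$ (automorphisms preserve free product decompositions and conjugacy classes), it suffices to check the claim for the pairs $(A_0,A_1)$, $(A_0,A_2)$, $(A_0,A_3)$. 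First I would verify that the index-$1$ pair $A_0=\la d,e,f\ra$, $A_1 = \la a,b,c\ra$ is disjoint: indeed $\free[7] = \la a,b,c\ra * \la d,e,f\ra * \la g\ra$ on the nose, so we may take $B=\la g\ra$. (By equivariance, $|i-j|=1$ in general gives a disjoint pair, since $\rho$ moves the support around but $\Phi_i$ is an automorphism.)

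Next I would handle $|i-j|=2$, i.e.\ the pair $A_0=\la d,e,f\ra$ and $A_2 = \la e,f,g\ra$. These are clearly distinct (they are not even equal as subgroups, since $d\in A_0\setminus A_2$ and $g\in A_2\setminus A_0$; and one checks the conjugacy classes differ, e.g.\ $d$ is not conjugate into $A_2$ because $A_2$ is a free factor not containing $d$ in any conjugate — this uses that $\la e,f,g\ra$ is a visibly free factor whose complement $\la a,b,c,d\ra$ contains $d$). For non-disjointness: if $A_0$ and $A_2$ were disjoint we would have $\free[7]=A_0 * A_2 * B$ for some $B$, forcing $\rank(B) = 7-3-3 = 1$; but then $\{d,e,f\}\cup\{e,f,g\}$ together with a basis of $B$ would generate $\free[7]$, which is impossible since $e$ and $f$ are repeated and the listed elements lie in the rank-$5$ subgroup $\la d,e,f,g, b\ra$... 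The cleanest argument: two disjoint free factors have a common "complement-respecting" basis, so in particular one can simultaneously realize $A_0$ and $A_2$ as subgraphs of a rose in $\CV[7]$ along disjoint edge-sets; but $A_0\cap A_2$ contains $\la e,f\ra$ of rank $2>0$, and disjoint free factors must intersect trivially (a core graph argument: if $\free = A_0 * A_2 * B$ then $A_0\cap A_2 = 1$). So non-disjointness is immediate from $\la e,f\ra \le A_0\cap A_2$ being nontrivial.

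Finally $|i-j|=3$, the pair $A_0 = \la d,e,f\ra$ and $A_3 = \la b,c,d\ra$: here $A_0\cap A_3$ contains $\la d\ra$, nontrivial, so the same intersection argument shows they are not disjoint; and they are distinct since, e.g., $e\in A_0$ is not conjugate into $A_3=\la b,c,d\ra$ (as $e$ lies in the complementary free factor $\la a, e, f, g\ra$). The only real point requiring care is the assertion that disjoint free factors intersect trivially and, conversely, that "not disjoint" follows; I would cite the standard fact (Kurosh / Grushko, or a core-graph argument as in \cite{CV:OuterSpace}) that in a free product decomposition $\free = A * A' * B$ the factors $A, A'$ are conjugate-separated and in particular $A\cap A' = \{1\}$. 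The main obstacle is therefore not any deep computation but being careful that "distinct" in the statement means distinct as conjugacy classes of free factors, and checking that in each of the index-$2$ and index-$3$ cases the relevant generator genuinely fails to be conjugate into the other factor — which follows because each $A_k$ here is exhibited as a standard free factor with an explicit complementary free factor containing the witness element.
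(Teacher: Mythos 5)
Your proof is correct and follows essentially the same route as the paper: reduce to the explicit base pairs $(A_0,A_k)$, $k=1,2,3$, from \eqref{eqn:ff1} and transfer via the automorphism $\Phi_{\min(i,j)}$ using \eqref{eqn:ff2}. The paper's proof is just a terser version of this; your verification of the base cases (an explicit complement $\la g\ra$ for disjointness, and the nontrivial intersections $\la e,f\ra$ and $\la d\ra$ together with the standard fact that conjugates of distinct factors in a free product decomposition intersect trivially for non-disjointness) correctly fills in what the paper leaves implicit.
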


  \begin{proof}

    We see from \Cref{eqn:ff1} that the statement of the lemma
    holds for $A_0, A_1, A_2$ and $A_3$. Now for $i \ge 1$ and $k \in
    \{1,2,3\}$, by \Cref{eqn:ff2}, the pair $A_i, A_{i+k}$
    differs from $A_0, A_k$ by the automorphism $\Phi_i$, whence the lemma.
    \qedhere 
  \end{proof}

  Recall the transition matrix $M_r$ for $\phi_r$, and the $3\times 3$
  matrix $B$ whose power $B^r$ forms a block of $M_r$. For each $i \ge 1$,
  let $\overline{M}_i = M_i$ mod 2.  By a simple computation, we see that
  $B^7 = I$ mod 2. Thus, when $i = j$ mod 7, $\overline{M}_i =
  \overline{M}_j$. We have the following lemma.

  \begin{lemma}\label{lem:sage fact} 
    
    Let $V_0$ be the 3-dimensional vector
    space of $(\mathbb{Z}/2\mathbb{Z})^7$ spanned by the vectors
    $(0,0,0,1,0,0,0)^T,(0,0,0,0,1,0,0)^T,(0,0,0,0,0,1,0)^T$. Then for all
    $i \ge 0$, 
    \[
      V_0 \bigcup \left( \bigcup_{j=0}^{107} \overline{M}_i
      \overline{M}_{i+1}\ldots \overline{M}_{i+j} V_0 \right) =
      (\mathbb{Z}/2\mathbb{Z})^7.
    \]
  
  \end{lemma}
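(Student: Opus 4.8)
The plan is to reduce Lemma~\ref{lem:sage fact} to a finite verification modulo~$2$, exploiting the periodicity observed just before the statement. First I would note that since $B^7 = I \pmod 2$, the reduced matrix $\overline M_i$ depends only on $i \bmod 7$; in fact from the block form $M_r = \begin{pmatrix} 0 & I \\ B^r & 0 \end{pmatrix}$ and $B^7 \equiv I$, the sequence $(\overline M_i)_{i \ge 0}$ is periodic with period dividing~$14$ (and, since the off-diagonal block pattern also matters, exactly a small explicit period). Therefore the product $\overline M_i \overline M_{i+1} \cdots \overline M_{i+j}$ depends only on the residue of $i$ modulo this period and on $j$. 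Consequently the union on the left-hand side, as a function of $i$, takes only finitely many values, one for each residue class of $i$. This collapses the ``for all $i \ge 0$'' quantifier to a check over at most $14$ values of $i$.

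Next, for each such residue class of $i$, I would verify the claimed equality. Observe that the left-hand side is an increasing (under inclusion) union of subspaces of $(\mathbb{Z}/2\mathbb{Z})^7$ as $j$ grows: writing $W_j = V_0 \cup \bigcup_{k=0}^{j} \overline M_i \cdots \overline M_{i+k} V_0$ (as a \emph{set}, though one really tracks the subspace it spans, or rather the set of vectors it contains), the relevant quantity is a union of finitely many $3$-dimensional affine-linear images of $V_0$. Since $(\mathbb{Z}/2\mathbb{Z})^7$ has only $2^7 = 128$ vectors, and each $\overline M_i \cdots \overline M_{i+j} V_0$ contributes $2^3 = 8$ vectors (fewer if the relevant product is singular mod~$2$), the union stabilizes after boundedly many steps $j$; the bound $107$ in the statement is a safe over-count. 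For each fixed starting residue, one lists the vectors $\overline M_i \cdots \overline M_{i+j} V_0$ for $j = 0, 1, \dots, 107$, takes their union together with $V_0$, and checks it exhausts all $128$ vectors.

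The actual computation is mechanical: it is a finite linear-algebra calculation over $\mathbb{F}_2$, and it is exactly the sort of thing the authors run in {\tt Sage}, as flagged in the introduction and in the proof of \Cref{lem:phi train track}. So the "proof" is really: (i) the periodicity reduction above, making the claim a finite check; (ii) cite the computer verification. The main obstacle—or rather the only content beyond bookkeeping—is confirming that the finitely many cases really do yield all of $(\mathbb{Z}/2\mathbb{Z})^7$; there is nothing conceptual forcing this, it is a feature of the specific matrix $B$ and the specific choice of $V_0$ (the span of $d,e,f$, i.e.\ of $A_0$), and it is precisely why the lemma is stated as a standalone computational fact to be invoked later (in the proof of \Cref{lem:consecutive}, to control the mod-$2$ homology classes that certify disjointness/non-disjointness of the free factors $A_i$). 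I would therefore present the proof as the periodicity argument followed by the sentence that the finite check was carried out with Coulbois' package, or by hand, matching the paper's established convention for such verifications.
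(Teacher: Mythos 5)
Your proposal is correct and matches the paper's proof: the paper reduces to finitely many cases via the periodicity $\overline{M}_{i+7}=\overline{M}_i$ (coming from $B^7\equiv I \bmod 2$, so the period is $7$ rather than your more cautious $14$) and then verifies the seven cases by a short {\tt Sage} computation over $\mathbb{F}_2$. The only cosmetic difference is that the bound $107$ is not merely a ``safe over-count''---the computer output shows it is exactly the index at which the union first exhausts $(\mathbb{Z}/2\mathbb{Z})^7$ in each case.
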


  \begin{proof} 
    
    Since $\overline{M}_i=\overline{M}_j$ whenever $i = j$ mod
    7, it is enough to verify the statement for $i \in \{0,\ldots,6\}$. In
    these cases, we can check the validity of the statement using {\tt Sage}
    with the following code: \lstset{language=Python} \lstset{frame=lines}
    \lstset{label={lst:code_direct}}
    \begin{lstlisting} 
B = matrix(GF(2), [
        [0,0,1],
        [1,0,0],
        [0,1,1]
    ])

def M(i):
    return block_matrix([
        [ matrix(4,3,0) , identity_matrix(4) ],
        [ B^i           , matrix(3,4,0)      ]
    ])

V0 = set( 
    (0,0,0,i,j,k,0) 
    for i in (0,1) 
    for j in (0,1) 
    for k in (0,1)
)

for i in range(0,7):
    W = set(V0)
    P = identity_matrix(7)
    for j in range(i,200):
        P = P*M(j)
        for v in V0:
            w = tuple(P*vector(v))
            W.add(w)
        if len(W) >= 2^7:
            break
    print(i,j)

# Output:
# 0 107
# 1 107
# 2 107
# 3 107
# 4 107
# 5 107
# 6 107\end{lstlisting}  

  \end{proof}

  \begin{lemma}\label{lem:consecutive} 
    For any sequence $(r_i)_i$, if $r_i \equiv i \mod 7$, then $109$
    consecutive $A_i$'s cannot be contained in the same free factor or be
    disjoint from a common factor.
  \end{lemma}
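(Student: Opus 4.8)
The plan is to reduce the statement about free factors to a linear-algebra statement over $\mathbb{F}_2$ that was verified computationally in \Cref{lem:sage fact}. The bridge is the observation that if a collection of free factors $A_{i}, A_{i+1}, \dots, A_{i+k}$ is contained in a common proper free factor $H$, or is disjoint from a common proper free factor, then in particular the $H_1(\free[7];\mathbb{Z}/2)$-images of these free factors all lie in a proper subspace of $(\mathbb{Z}/2\mathbb{Z})^7$ (of dimension at most $6$ in the "contained in" case, and of dimension at most $6$ as the \emph{annihilator}/complement in the "disjoint from" case). So it suffices to show that the $\mathbb{F}_2$-spans of the images of $109$ consecutive $A_i$'s generate all of $(\mathbb{Z}/2\mathbb{Z})^7$, and dually that they are not all contained in any common hyperplane.

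First I would record that the image of $A_i = \Phi_i(A_0)$ in $H_1(\free[7];\mathbb{Z}/2)$ is the subspace $\overline{M}_1\overline{M}_2\cdots\overline{M}_i\,V_0$, where $V_0$ is the coordinate subspace spanned by the $d,e,f$ basis vectors (this is just the definition of the transition matrix reduced mod $2$, together with $A_0 = \la d,e,f\ra$, so the abelianization of $\Phi_i$ acts by $\overline{M}_1\cdots\overline{M}_i$). Using the hypothesis $r_i \equiv i \bmod 7$ and the fact that $B^7 \equiv I \bmod 2$, each $\overline{M}_{r_i}$ equals $\overline{M}_{i}$ (the matrix from the code), and these matrices are invertible over $\mathbb{F}_2$ (the block form has determinant $\det(B)^{r_i} = 1$). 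Hence, shifting by the invertible change of coordinates $\overline{M}_1\cdots\overline{M}_{i-1}$, the span of the images of $A_i,\dots,A_{i+108}$ generates everything if and only if $V_0 \cup \bigcup_{j=0}^{108} \overline{M}_i\overline{M}_{i+1}\cdots\overline{M}_{i+j}V_0$ spans $(\mathbb{Z}/2\mathbb{Z})^7$ — and that is exactly \Cref{lem:sage fact} (which gives it already with $107$, a fortiori with $108$).

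The remaining point is to handle the two cases of the lemma's conclusion uniformly. If $109$ consecutive $A_i$'s were all contained in a common proper free factor $H$ (after conjugation), then since conjugation acts trivially on $H_1$, all the $\mathbb{F}_2$-images lie in the image of $H$, which is a proper subspace; this contradicts the spanning statement. If instead they are all disjoint from a common proper free factor $H'$, write $\free[7] = A_i * H' * B_i$ for each $i$; then the image of each $A_i$ lies in a fixed complement of the image of $H'$, again a proper subspace, and we get the same contradiction. (One subtlety: "disjoint" is defined allowing independent conjugations, so strictly one should note that the $\mathbb{F}_2$-homology image of a conjugate of $A_i$ equals that of $A_i$, so the complement of $\operatorname{im}(H')$ can be chosen once and for all.)

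The main obstacle is purely bookkeeping: making sure the indexing matches the code (the code computes $\overline{M}(j)$ for $j$ running over an interval, and one must check that $r_i \equiv i \bmod 7$ makes $\overline{M}_{r_i} = \overline{M}_i = \overline{M}(i)$ in the code's convention, and that $108$ steps of multiplication corresponds to $109$ free factors $A_i, \dots, A_{i+108}$). There is no hard mathematical content beyond invoking \Cref{lem:sage fact}; the delicacy is entirely in verifying that "contained in a common free factor" and "disjoint from a common free factor" both force the homology images into a proper $\mathbb{F}_2$-subspace, which is immediate from the structure theory of free factors and their behavior under abelianization.
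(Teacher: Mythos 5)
Your overall strategy is the paper's: pass to mod-$2$ abelianization, where the image of $A_{i+j}$ is $\overline{M}_1\cdots\overline{M}_{i+j}V_0$, and use an invertible change of coordinates to turn $109$ consecutive images into the family of subspaces appearing in \Cref{lem:sage fact}. (Your indexing is slightly off: $A_{i-1},\dots,A_{i+107}$ reduces to $V_0\cup\bigcup_{j=0}^{107}\overline{M}_i\cdots\overline{M}_{i+j}V_0$, which is exactly $109$ subspaces, whereas your union lists $110$; this is harmless bookkeeping.) The ``contained in a common factor'' case is handled correctly: conjugation acts trivially on homology, a proper free factor has proper image, so all $109$ images would sit in a proper subspace, contradicting the lemma.

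The gap is in the ``disjoint from a common factor'' case. You claim that if each $A_i$ is disjoint from $H'$ then each $\operatorname{im}(A_i)$ lies in \emph{a fixed} complement of $\operatorname{im}(H')$. That is false: the complement $\operatorname{im}(A_i)\oplus\operatorname{im}(B_i)$ depends on the splitting $\free[7]=A_i\ast H'\ast B_i$ and varies with $i$. Already in $\free[2]=\langle a,b\rangle$ with $H'=\langle a\rangle$, the factors $\langle b\rangle$ and $\langle ab\rangle$ are each disjoint from $H'$, yet their mod-$2$ images $\langle e_2\rangle$ and $\langle e_1+e_2\rangle$ together span $(\Z/2\Z)^2$. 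So the statement you reduce to --- that the images \emph{span} $(\Z/2\Z)^7$, equivalently are not contained in a common hyperplane --- does not contradict common disjointness. The repair is to use the full strength of \Cref{lem:sage fact}: it asserts that the \emph{union} of the $109$ subspaces is all of $(\Z/2\Z)^7$, i.e.\ every vector lies in at least one of them. Disjointness of $A_j$ and $H'$ gives $\operatorname{im}(A_j)\cap\operatorname{im}(H')=0$, since the two images are complementary summands inside the image of the free factor $A_j\ast H'$. Now pick any nonzero $v\in\operatorname{im}(H')$; by the lemma $v\in\operatorname{im}(A_j)$ for some $j$ among the $109$, a contradiction. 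With that substitution your argument coincides with the paper's proof.
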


  \begin{proof}
    
    For any $i \ge 1$ and $k \ge 0$, let \[B_{i+k} = \phi_i \phi_{i+1}
    \cdots \phi_{i+k} A_0. \] Abelianizing and reducing mod 2, we have $A_0
    \equiv V_0$, and $B_{i+k} \equiv \overline{M}_i \cdots
    \overline{M}_{i+k} V_0$. Thus, by \Cref{lem:sage fact}, the
    sequence $\{A_0, B_i, \ldots, B_{i+107}\}$ cannot be contained in the
    same free factor or be disjoint from a common factor.
    
    Now consider any sequence $(r_i)_i$ with $r_i \equiv i$ mod $7$, so
    that $\overline{M}_{r_i} = \overline{M}_i$ for all $i$. Let $A_i =
    \Phi_i A_0 = \phi_{r_1} \cdots \phi_{r_i}A_0$. Set $\Phi_0 = \id$. For
    any $i \ge 1$, by applying the automorphism $\Phi_{i-1}^{-1}$, the
    sequence of free factors $\{A_{i-1},\ldots,A_{i+107}\}$ is isomorphic to
    the sequence \[\{A_0, \phi_{r_i}A_0, \ldots, \phi_{r_i}\phi_{r_{i+1}}
    \cdots \phi_{r_{i+107}}A_0 \}. \] The latter sequence after
    abelianization and reducing mod 2 is equivalent to the sequence
    $\{A_0,B_i,\ldots,B_{i+107}\}$. Thus $\{A_{i-1},\ldots,A_{i+107}\}$
    cannot be contained in the same factor or be disjoint from a common
    factor.\qedhere
  \end{proof}

  \subsection{Subfactor projection}

  We will now use subfactor projection theory originally introduced in
  \cite{BF:Subfactor} and further developed in \cite{Taylor:Subfactor}
  to show that $(A_i)_i$ is  a quasi-geodesic for
  appropriate choices of sequence $(r_i)_i$.

  We first define subfactor projection and recall the main results about
  them. For $G \in \CV$ and a rank $\geq 2$ free factor $A$, let $A|G$
  denote the core subgraph of the cover of $G$ corresponding to the
  conjugacy class of $A$. Pulling back the metric on $G$, we obtain $A|G
  \in \mathrm{CV}(A)$. Denote by $\pi_A(G):= \pi(A|G) \subset \F(A)$ the
  projection of $A|G$ to $\F(A)$. Here $\mathrm{CV}(A)$ is the Outer space
  of the free group $A$ and $\F(A)$ is the corresponding free factor
  complex. 

  Recall two free factors $A$ and $B$ are \emph{disjoint} if they are
  distinct vertex stabilizers of a free splitting of $\free$. If $B$ is not
  compatible with $A$, then we say $B$ \emph{meets} $A$, that is, $B$ and
  $A$ are not disjoint and $A$ is not contained in $B$, up to conjugation.
  In this case, define the projection of $B$ to $\F(A)$ as follows:
  \[\pi_A(B):=\{\pi_A(G) | G \in \CV \text{ and } B|G \subset G \text{ is
  embedded }\}\] If $B$ is compatible with $A$, then define $\pi_A(B)$ to
  be empty. If $A$ meets $B$ and $B$ meets $A$, then we say $A$ and $B$
  \emph{overlap}.  
  
  \begin{thm}[\cite{Taylor:Subfactor}]\label{thm:subfactor projection}

    Let $A,B,C$ be free factors of $\free$. There is a constant $D$ 
    depending only on $n$ such that the following statements hold.
    \begin{enumerate} 
      \item If $\rank(A) \ge 2$, then either $A \subseteq B$ (up to
        conjugation), $A$ and $B$ are disjoint, or $\pi_A(B) \subset
        \mathcal{F}(A)$ is defined and has diameter $\leq D$. 
      \item If $\rank(A) \ge 2$, $B$ and $C$ meet $A$, and $B$ is
        compatible with $C$, then \[ d_A(B,C) = \diam_{\F(A)}(\pi_A(B) \cup
        \pi_A(C)) \le D.\] 
      \item If $A$ and $B$ overlap, have rank at least $2$, and $C$ meets
        both, then \[ \min \{ d_A(B,C), d_B(A,C)\} \le D.\] 
    \end{enumerate}
  \end{thm}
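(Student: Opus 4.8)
This is Taylor's theorem \cite{Taylor:Subfactor}, built on the subfactor projection machinery of Bestvina--Feighn \cite{BF:Subfactor}, so the plan is to recall the shape of that argument. All three statements are transported into the coarse geometry of the Outer space $\mathrm{CV}(A)$ and the Gromov-hyperbolic free factor complex $\F(A)$, using three soft inputs, each with constants depending only on $\rank(A)\le n$: (i) $\F(A)$ is hyperbolic, and the natural projection from $\CV$ (and from $\mathrm{CV}(A)$) to $\F(A)$, given by $G\mapsto \pi(A|G)$, is coarsely Lipschitz and sends folding paths to unparametrized quasigeodesics; (ii) any two marked graphs are joined by a folding path, each elementary fold changing the core $A|G$ of the $A$-cover by a bounded amount; and (iii) bounded cancellation for morphisms of graphs (a consequence of the Morse lemma applied to the quasi-isometry on universal covers), which bounds how $A|G$ changes when the ambient graph $G$ is altered while a given subgraph is kept embedded. ``Bounded'' below always means by a constant depending only on $n$, since every constant that enters is of this type.

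For (1), fix $B$ meeting $A$ and two graphs $G,G'\in\CV$ in which $B$ embeds; the claim is that $\pi_A(G)$ and $\pi_A(G')$ are boundedly close in $\F(A)$. The point is that the core $A|G$ depends coarsely only on how $A$ and $B$ sit, not on the rest of $G$: one can collapse $G$, keeping $B|G$ embedded at every stage, to a ``standard'' graph in which the $B$-part is a fixed model, and by (ii)--(iii) each move alters $A|G$ boundedly in $\F(A)$; doing the same for $G'$ and comparing the standard models gives the bound. Equivalently, $\{A|G : B|G\hookrightarrow G\}$ has bounded diameter in $\mathrm{CV}(A)$, and (i) finishes it. For (2), if $B$ and $C$ both meet $A$ and are compatible (so $B\subseteq C$ up to conjugation, or $B$ and $C$ disjoint), realize the splitting of $\free$ witnessing their compatibility as a subgraph decomposition of a single graph $G\in\CV$ in which both $B|G$ and $C|G$ embed simultaneously; then $\pi_A(G)$ lies in both $\pi_A(B)$ and $\pi_A(C)$, and (1) bounds $d_A(B,C)$.

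The substance is the Behrstock-type inequality (3), and here lies the main obstacle. Assume $A$ and $B$ overlap, $C$ meets both, and $d_A(B,C)>D$ for $D$ large; one must deduce $d_B(A,C)\le D$. Take $G$ witnessing $\pi_A(C)$ and a folding path in $\CV$ from a graph witnessing $\pi_A(B)$ to $G$; its image in $\F(A)$ is an unparametrized quasigeodesic, so the hypothesis $d_A(B,C)>D$ together with the bounded geodesic image theorem in the hyperbolic complex $\F(A)$ forces this path to avoid a neighborhood of $\pi_A(B)$ except near its start. Unwinding what this says about the graphs along the path --- that they record $C$ but are nowhere near recording $B$ --- pins down the position of $C$ relative to the splitting $\free=A*B*\cdots$ tightly enough that, viewed from $B$, all the relevant graphs project $C$ to the same place in $\F(B)$ as $A$ does, i.e.\ $d_B(A,C)\le D$. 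The hard part is precisely establishing the bounded geodesic image / ``nesting'' statement for subfactor projections and controlling their interaction with folding paths and with the order coming from overlapping factors; lacking an ambient surface to organize the combinatorics, this is the technical heart of \cite{Taylor:Subfactor} (and of \cite{BF:Subfactor} in the cases they treat).
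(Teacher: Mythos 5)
This statement is not proved in the paper at all: it is imported verbatim from \cite{Taylor:Subfactor} (building on \cite{BF:Subfactor}) and used as a black box, so there is no internal argument to compare yours against. Judged on its own terms, your write-up is a reasonable reconstruction of the \emph{strategy} of the cited proof, but it is not a proof. You correctly identify the three inputs (hyperbolicity of $\F(A)$ and coarse behavior of $G\mapsto \pi(A|G)$ along folding paths, realizability of splittings by subgraphs, bounded cancellation), and your arguments for (1) and (2) are in the right spirit: (2) in particular is essentially the standard one-line argument once (1) is known, since a single $G$ realizing the compatibility of $B$ and $C$ contributes a common point to $\pi_A(B)$ and $\pi_A(C)$. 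But even for (1) the load-bearing claim --- that $\{\pi_A(G): B|G \hookrightarrow G\}$ has diameter bounded only in terms of $n$ --- is asserted via an unspecified ``collapse to a standard model, each move changes $A|G$ boundedly'' scheme, with no argument for why each move is bounded uniformly; that uniformity is exactly what requires the bounded-cancellation estimates with explicit constants.

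For (3) you say outright that the nesting/bounded-geodesic-image statement for subfactor projections and its interaction with folding paths is ``the technical heart'' of \cite{Taylor:Subfactor} and you do not supply it; the passage ``unwinding what this says about the graphs along the path \ldots pins down the position of $C$ \ldots so that $d_B(A,C)\le D$'' is the entire content of the Behrstock inequality and is left as a gesture. So the proposal should be read as an accurate annotated citation rather than a proof. In the context of this paper that is acceptable --- the authors themselves only cite the result --- but if the goal were to actually prove the theorem, statement (3) remains entirely open in your write-up, and (1) needs the quantitative bounded-cancellation argument filled in.
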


  \begin{thm}[Bounded geodesic image theorem
    \cite{Taylor:Subfactor}]\label{thm:BGI} For $n \geq 3$, there exists $D'
    \geq 0$ such that if $A$ is a free factor with $\text{rank}(A) \geq 2$
    and $\gamma$ is a geodesic of $\FreeF$ with each vertex of $\gamma$
  having a well-defined projection to $\F(A)$, then
  $\text{diam}(\pi_A(\gamma)) \leq D'$. \end{thm}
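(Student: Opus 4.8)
The plan is to deduce this statement directly from the subfactor projection package in \Cref{thm:subfactor projection}, running the standard Masur--Minsky argument adapted to the free factor complex. First I would fix a geodesic $\gamma = (x_0, x_1, \ldots, x_m)$ in $\FreeF$, where each $x_k$ is (the conjugacy class of) a proper free factor, and suppose toward a contradiction that $\diam_{\F(A)}(\pi_A(\gamma))$ is large compared to the constant $D$ of \Cref{thm:subfactor projection} and the hyperbolicity constant $\delta$ of $\FreeF$. The hypothesis that every vertex of $\gamma$ has a well-defined projection to $\F(A)$ means, by part (1) of \Cref{thm:subfactor projection}, that no $x_k$ contains $A$ up to conjugation and no $x_k$ is disjoint from $A$; equivalently every $x_k$ meets $A$ (and has rank $\ge 2$ along the geodesic, or can be taken to — note $\FreeF$ is connected via rank-$\ge 2$ factors when $n\ge 3$, so we may assume the vertices we project from have large enough rank; low-rank vertices are handled by the coarse well-definedness of $\pi$ on adjacent factors).

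The key step is a contraction/Lipschitz estimate: I claim that if two vertices $x_k, x_\ell$ of $\gamma$ have $d_{\F(A)}(\pi_A(x_k), \pi_A(x_\ell)) > 3D$, then $A$ itself must lie ``close to'' the segment of $\gamma$ between $x_k$ and $x_\ell$ in $\FreeF$ — more precisely, there is a free factor $F$ near $\gamma$ that overlaps $A$ and satisfies $d_A(F, x_k)$ large, forcing (via part (3)) $d_F(A, x_k)$ to be bounded, which pins the $\FreeF$-distance from $A$ to $\gamma$. The cleanest route is the following: choose $x_k, x_\ell \in \gamma$ realizing (almost) the diameter of $\pi_A(\gamma)$; since $\FreeF$ is $\delta$-hyperbolic, consider a geodesic triangle with vertices $A$, $x_k$, $x_\ell$ and let $w$ be a point on $[x_k, x_\ell] \subseteq \gamma$ within $\delta$ of $[A, x_k] \cup [A, x_\ell]$. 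If $w$ (or a nearby vertex) is compatible with $A$, then by part (2) of \Cref{thm:subfactor projection} its projection-via-compatibility is controlled, but that contradicts the choice of $x_k, x_\ell$ once we also use part (2) to say $d_A(x_k, w)$ and $d_A(w, x_\ell)$ are each $\le D$ because consecutive vertices of $\gamma$ are joined by an edge hence are compatible — wait, that is exactly the point: adjacent vertices in $\FreeF$ are nested, hence compatible, so by part (2) $d_A(x_j, x_{j+1}) \le D$ for every edge of $\gamma$ on which both projections are defined. This already gives $\diam_{\F(A)}(\pi_A(\gamma)) \le D \cdot (\text{length of } \gamma)$, a Lipschitz bound, but not yet the \emph{uniform} bound $D'$; to upgrade to uniformity one invokes hyperbolicity of $\FreeF$ to reroute $\gamma$ and iterate, a standard bootstrap (cf. Masur--Minsky, \cite{Taylor:Subfactor}).

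The main obstacle I expect is precisely this last bootstrap: turning the crude Lipschitz estimate $\diam(\pi_A(\gamma)) \lesssim |\gamma|$ into a constant independent of $|\gamma|$. The mechanism is the usual one for hyperbolic spaces with contracting projections: one shows the nearest-point projection of $A$ onto the geodesic $\gamma$ has diameter $O(\delta)$, and that the $\F(A)$-projection only ``moves'' appreciably near this nearest-point projection, using part (3) of \Cref{thm:subfactor projection} (the Behrstock-type inequality $\min\{d_A(B,C), d_B(A,C)\} \le D$) to say that once we pass the point on $\gamma$ closest to $A$, the subfactor $A$ is ``hidden'' behind the vertices of $\gamma$ and their mutual compatibility bounds the projection. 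Concretely: let $p$ be a vertex of $\gamma$ closest to $A$ in $\FreeF$; for a vertex $x_k$ far from $p$ along $\gamma$, a geodesic from $A$ to $x_k$ passes near $p$, so $A$ and $x_k$ are ``separated'' by the vertex near $p$, and applying part (3) with $B = x_k$, $C$ the vertex near $p$ gives that one of $d_A(x_k, C)$ or $d_{x_k}(A, C)$ is $\le D$; the first case is what we want directly, and in the second case we combine with part (2) (compatibility along $\gamma$) to transfer the bound back to $\F(A)$. Summing over the $O(1)$ many transitions yields $D' = D'(n)$. The only genuinely delicate bookkeeping is ensuring all the auxiliary vertices invoked have rank $\ge 2$ and that the coarse well-definedness constant of $\pi$ is absorbed into $D'$; this is routine given $n \ge 3$, which is exactly why the hypothesis appears.
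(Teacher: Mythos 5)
First, a meta-point: the paper does not prove this statement at all --- it is imported verbatim from \cite{Taylor:Subfactor} as a black box, so there is no internal proof to compare against. Your attempt to derive it from \Cref{thm:subfactor projection} alone contains a genuine gap. The first half of your argument is fine: adjacent vertices of $\FreeF$ are nested, hence compatible, so part (2) of \Cref{thm:subfactor projection} gives $d_A(x_j,x_{j+1})\le D$ along every edge of $\gamma$ whose endpoints project, i.e.\ the projection is coarsely Lipschitz. But upgrading this to a bound independent of the length of $\gamma$ is exactly the content of the theorem, and it does not follow from hyperbolicity of $\FreeF$ plus the Behrstock-type inequality of part (3) by the rerouting you describe. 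In your key step you apply part (3) to $B=x_k$ and a vertex $C$ near the point of $\gamma$ closest to $A$, obtaining $\min\{d_A(x_k,C),\, d_{x_k}(A,C)\}\le D$; in the second alternative you learn only about distances in $\F(x_k)$, and part (2) provides no mechanism for converting a bound in $\F(x_k)$ into a bound in $\F(A)$ (it bounds $d_A(B,C)$ only when $B$ and $C$ are compatible with each other and both meet $A$, which need not hold for the pairs arising here). There is also no a priori reason the auxiliary vertex $C$ meets both $A$ and $x_k$, as part (3) requires. More fundamentally, a coarsely Lipschitz projection satisfying a Behrstock inequality on a hyperbolic space need not have bounded image on geodesics far from the defining object, so some further geometric input is unavoidable; the ``standard bootstrap'' you invoke is not standard in this generality.

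The missing input is supplied in \cite{Taylor:Subfactor}, building on \cite{BF:Subfactor}, by folding paths: Bestvina--Feighn show that the projection $\pi_A$ of a folding path in Outer space is coarsely constant as long as $A$ does not become embedded in the graphs along the path, and geodesics of $\FreeF$ fellow-travel projections of folding paths, so the bounded geodesic image theorem for geodesics is deduced from the corresponding contraction statement for folding paths. That contraction property is genuinely additional to the three items of \Cref{thm:subfactor projection} and cannot be recovered from them. If you want a self-contained argument you would have to reprove it; otherwise the correct move is the one the paper makes, namely to cite the result.
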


  We now prove the following lemma. 
  
  \begin{lemma}\label{lem:i-k projection}
    For any $K>0$, there exists a constant $r = r(K)$ such that for any
    sequence $(r_i)_{i \ge 1}$, if $r_i \ge r$ for all $i$, then the
    following statements hold:
    \begin{enumerate}
      \item For any $j \geq 2$, the projections of $A_{j-2}$ and $A_{j+2}$
        to the free factor complex $\F(A_j)$ are defined and the distance
        between them is at least $K$. 
      \item Let $D$ be the constant of \Cref{thm:subfactor
        projection}. If $K > 3D,$ then for any $i < j <k$, if
        $j-i\geq 2$ and $k-j\geq 2$, the projections of $A_i$ and $A_k$ to
        $\F(A_j)$ are defined and have distance at least $K-2D$.
    \end{enumerate}
  \end{lemma}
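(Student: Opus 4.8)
The plan is to deduce both statements from the structure lemmas already proved in this section together with the subfactor projection machinery of Taylor (\Cref{thm:subfactor projection} and \Cref{thm:BGI}). Fix $K > 0$. The first task is to produce the constant $r = r(K)$ so that the projections $\pi_{A_j}(A_{j-2})$ and $\pi_{A_j}(A_{j+2})$ are far apart whenever all $r_i \ge r$.

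For part (1), first observe that $A_{j-2}$ and $A_{j+2}$ each overlap $A_j$: by \Cref{lem:i-j}, $A_j$ and $A_{j\pm 2}$ are distinct and not disjoint, and since none is contained in another (again by \Cref{lem:i-j}, as consecutive factors are disjoint and hence neither contains the other, and \Cref{eqn:ff1} together with $\Phi_j$-equivariance pins down the containment pattern), $A_{j\pm 2}$ meets $A_j$. By \Cref{thm:subfactor projection}(1) the projections $\pi_{A_j}(A_{j-2})$ and $\pi_{A_j}(A_{j+2})$ are therefore defined with diameter $\le D$. The key point is that $r_j$ controls the distance $d_{A_j}(A_{j-2},A_{j+2})$: applying the equivariance $A_i = \Phi_{j-1}(A_{i-j+1})$ from \Cref{eqn:ff2}, the triple $(A_{j-2},A_j,A_{j+2})$ is the $\Phi_{j-1}$-image of $(A_{-1},A_1,A_3)$, and under $\Phi_{j-1}$ followed by one more application of $\phi_{r_j}$ one reads off from the explicit matrices $M_{r_j}$ (\Cref{lem:phi train track}) and the block form containing $B^{r_j}$ that the relative twisting grows with $r_j$. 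Concretely, the core graph $A_{j+2}|(A_j|\tau_j')$ wraps around the $B^{r_j}$-block a number of times proportional to $r_j$, so $\diam_{\F(A_j)}\big(\pi_{A_j}(A_{j-2}) \cup \pi_{A_j}(A_{j+2})\big) \to \infty$ as $r_j \to \infty$, uniformly in $j$ by equivariance. Choosing $r(K)$ so that this diameter exceeds $K$ whenever $r_j \ge r(K)$ gives part (1).

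For part (2), assume $K > 3D$ and let $i < j < k$ with $j - i \ge 2$ and $k - j \ge 2$. First I would reduce to the cases $i = j-2$ and $k = j+2$: the factors $A_i, A_{j-2}$ satisfy $|i - (j-2)| \le j-i-2$, and a repeated application of \Cref{thm:subfactor projection}(2) (using that consecutive $A$'s, and more generally $A_m$ and $A_{m'}$ with $|m-m'|=1$, are disjoint hence compatible, so their projections to $\F(A_j)$ agree up to $D$) shows $d_{A_j}(A_i, A_{j-2}) \le D$ and similarly $d_{A_j}(A_k, A_{j+2}) \le D$ — provided those projections are defined, which follows from \Cref{thm:subfactor projection}(1) once we check $A_i$ and $A_k$ meet $A_j$. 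That meeting condition is where \Cref{lem:consecutive} enters: if $A_i$ (or $A_k$) were compatible with $A_j$ it would, together with the string of $A_m$'s between them, be contained in or disjoint from a common factor over a range that, once $|i-j|$ or $|k-j|$ is large, contradicts \Cref{lem:consecutive}; for short ranges one argues directly from \Cref{lem:i-j} and the overlap established in part (1). With the projections defined and the reduction in hand, the triangle inequality gives
\[
  d_{A_j}(A_i, A_k) \ge d_{A_j}(A_{j-2}, A_{j+2}) - d_{A_j}(A_i, A_{j-2}) - d_{A_j}(A_k, A_{j+2}) \ge K - 2D,
\]
using part (1) for the first term. This yields part (2).

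The main obstacle I anticipate is verifying that $A_i$ and $A_k$ genuinely \emph{meet} $A_j$ (neither compatible nor disjoint) for \emph{all} admissible ranges $j-i, k-j \ge 2$, not merely the generic large ones — this is exactly the subtle point the authors flag in the introduction, that there is no algorithm to detect a common complement, and it is why \Cref{lem:consecutive} with its computer-checked bound of $109$ consecutive factors is needed. Bridging the gap between ``$109$ consecutive factors have no common behavior'' and ``any two factors at distance $\ge 2$ from $A_j$ meet it'' will likely require combining \Cref{lem:consecutive} with \Cref{lem:i-j} and possibly an induction on the index range, together with the observation that compatibility is not preserved under the relevant twisting when $r_i$ is large. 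Everything else — the growth estimate from the $B^{r_j}$-block and the triangle-inequality bookkeeping — is routine once that meeting property is secured.
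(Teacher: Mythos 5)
There are genuine gaps in both parts. In part (1), your mechanism for why $d_{A_j}(A_{j-2},A_{j+2})$ grows --- that the core graph ``wraps around the $B^{r}$-block a number of times proportional to $r$,'' so the ``relative twisting'' grows --- does not lower-bound distance in a free factor complex: large transition-matrix entries (large word length, large twisting) are perfectly compatible with bounded distance in $\F(A_j)$, just as high powers of a Dehn twist stay within bounded distance of the twisting curve in the curve complex. The input the paper actually uses is that $\phi$ restricts to a \emph{fully irreducible} automorphism of $\la a,b,c\ra$ and hence acts loxodromically on $\F(\la a,b,c\ra)$; one then identifies (via $\Phi_{j-2}^{-1}$ and the computations in \Cref{eqn:ff1}) the two projections with points $D$-close to $\rho(\la a,b\ra)$ and $\rho\phi^{r_{j-1}}(\la b,c\ra)$ in $\F(A_2)$, and chooses $r(K)$ using the loxodromic action. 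Without that input, part (1) is unproved.

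In part (2), the reduction to $i=j-2$, $k=j+2$ by chaining \Cref{thm:subfactor projection}(2) along $A_i,A_{i+1},\dots,A_{j-2}$ only yields $d_{A_j}(A_i,A_{j-2})\le (j-2-i)D$, which grows with the gap and destroys the uniform bound $K-2D$. The paper instead inducts on $k-i$ and uses statement (3) of \Cref{thm:subfactor projection} (the Behrstock-type inequality): by induction $d_{A_{j+2}}(A_j,A_k)\ge K-2D>D$, so that inequality forces $d_{A_j}(A_{j+2},A_k)\le D$ in a single step, with no chain. The same induction, combined with statement (2), is also what establishes that $A_i$ and $A_k$ genuinely meet $A_j$: if some $A_s$ in the relevant range were compatible with $A_j$, both would project to $\F(A_{j+2})$ at distance $\le D$, contradicting the inductive lower bound. \Cref{lem:consecutive} plays no role in this lemma --- it is used later, in \Cref{prop:qi-geodesic}, to rule out a vertex of a geodesic being compatible with too many consecutive $A_j$'s --- so your plan to invoke it here would not close the gap. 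The missing ideas are thus the loxodromic action of $\phi$ on $\F(\la a,b,c\ra)$ for (1), and the induction-plus-Behrstock argument (rather than a triangle-inequality chain) for (2).
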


  \begin{proof}
    
    Recall for any $r$, $\phi_r = \rho \phi^r$, where $\phi$ restricts to a
    fully irreducible outer automorphism of $\la a, b, c\ra$. In
    particular, $\phi$ acts as a loxodromic isometry of the free factor
    complex $\F(\la a,b,c \ra)$, Thus, for any $K$, there exists $r=r(K)$
    such that for all $s \ge r$, the distance between $\phi^s(\la b,c\ra)$
    is at least $K+2D$ away from $\la a,b\ra$ in $\F(\la a,b,c\ra)$. 

    Now consider any sequence $(r_i)_i$ with $r_i \ge r$ for all $i$. By
    \Cref{lem:i-j} and \Cref{thm:subfactor projection}, the projections of
    $A_{j-2}$ and $A_{j+2}$ to $\F(A_j)$ are defined. Moreover, by
    \Cref{eqn:ff2}, we see that, by applying an automorphism, the distance
    between projections of $A_{j-2}$ and $A_{j+2}$ in $\F(A_j)$ is the same
    as the distance between the projections of $A_0 = \la d, e, f \ra$ and
    $\phi_{r_{j-1}}(A_3) = \la \phi_{r_{j-1}}(b),\phi_{r_{j-1}}(c),a \ra$
    to $\F(A_2) = \F(\la e, f, g \ra)$. Note that the rotation $\rho$ sends
    the free factor $\la a,b,c\ra$ to $A_2$, thus inducing an isometry from
    $\F(\la a,b,c\ra)$ to $\F(A_2)$. The projection of $A_0$ to $\F(A_2)$
    is $D$-close to the factor $\la e,f\ra = \rho (\la a,b \ra)$, and the
    projection of $\phi_{r_{j-1}}(A_3)$ to $\F(A_2)$ is $D$-close to the
    factor $\rho \phi^{r_{j-1}}(\la b,c \ra)$. Thus, the distance in
    $\F(A_2)$ of the two projections is at least $K$. This shows the first
    statement of the Lemma.
  
    Now fix $K > 3D$ and let $(r_i)_i$ be any sequence with $r_i \ge r(K)$
    for all $i$. We will prove the second statement by inducting on $l=k-i$
    with the previous statement giving the base case $l = 4$. Suppose we
    are given $A_i,A_j,A_k$ with $l=k-i>4$, $j-i,k-j\geq 2$. We first claim
    that projections of $A_{j+2},A_{j+3},\cdots, A_k$ to $\F(A_j)$ are
    defined, i.e.\ none of them are equal to or disjoint from $A_j$. For
    suppose $A_s$ is the first on the list that is equal to or disjoint
    from $A_j$. By \Cref{lem:i-j} we have $4\leq s-j < k-i$. By induction,
    the projections of both $A_j$ and $A_s$ to $\F(A_{j+2})$ are defined
    and the distance between their projections is $\geq K-2D > D$. Using
    statement 2 of \Cref{thm:subfactor projection}, this implies that $A_s$
    and $A_j$ cannot coincide or be disjoint, proving the claim. By the
    same argument, we also have that the projections of
    $A_i,A_{i+1},\cdots,A_{j-2}$ to $\F(A_j)$ are all defined.
    
    By the first statement of the lemma, we have $d_{A_j}(A_{j-2},A_{j+2})
    \ge K$. We now claim that $d_{A_j}(A_{j+2}, A_k) \le D$. If $k=j+3$, then
    $A_{j+2}$ and $A_k$ are disjoint, and the claim holds by statement 2 of
    \Cref{thm:subfactor projection}. If $k \ge j+4$, then applying
    induction again to $j$, $j+2$ and $k$, we see that $A_j$ and $A_k$ have
    well-defined projections to $\F(A_{j+2})$ and $d_{A_{j+2}}(A_j,A_k) \ge
    K-2D > D$. Now, the claim follows by the third statement of
    \Cref{thm:subfactor projection}. By the same argument, we also see that
    $d_{A_j}(A_i,A_{j-2}) \le D$. We now conclude $d_{A_j}(A_i,A_k) \ge
    K-2D$ by the triangle inequality. \qedhere
  \end{proof}

  We are now ready to prove the main results of this section.

  \begin{prop}\label{prop:qi-geodesic} 

    There exists $R>0$ such for any sequence $(r_i)_{i \ge 1}$, if $r_i \ge
    R$, and $r_i \equiv i$ mod 7, then the sequence $(A_i)_{i \ge 0}$ is a
    quasi-geodesic in $\FreeF[7]$. 

  \end{prop}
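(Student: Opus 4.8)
The plan is to apply the standard Masur--Minsky style criterion for recognizing quasi-geodesics in a hyperbolic space via projections, in the form adapted to the free factor complex by Taylor and Bestvina--Feighn. The key input is \Cref{lem:i-k projection}: choosing $R = r(K)$ for a suitably large $K$ (in particular $K > 3D$, and also $K$ large compared to the constants $D$, $D'$, and the coarse diameter bound on the projection of a simplex), we know that for any $i < j < k$ with $j - i \ge 2$ and $k - j \ge 2$, the projections of $A_i$ and $A_k$ to $\F(A_j)$ are defined and satisfy $d_{A_j}(A_i, A_k) \ge K - 2D$, which can be made larger than any prescribed threshold. Combined with \Cref{lem:consecutive} (using $r_i \equiv i \bmod 7$), which guarantees that no $109$ consecutive $A_i$'s lie in a common factor or are disjoint from a common factor, we have uniform control on both the projections and the ``spread'' of the sequence.

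First I would invoke the projection-based quasi-geodesicity criterion: a sequence of vertices $(A_i)$ in a hyperbolic complex such that consecutive pairs are uniformly close in an appropriate auxiliary sense, and such that each $A_j$ has large projection distance $d_{A_j}(A_i, A_k)$ for all $i < j < k$ sufficiently separated from $j$, must be a uniform quasi-geodesic; this is the form of Proposition used in \cite{BF:Subfactor} and \cite{Taylor:Subfactor}. The role of \Cref{lem:consecutive} is to handle the ``near-diagonal'' behavior: it rules out the degenerate possibility that long blocks of the $A_i$'s have no well-defined mutual projections (because they are all nested in or disjoint from a single factor), which is exactly the gap not covered by \Cref{lem:i-k projection}. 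Together these say that, for the geodesic $[A_i, A_k]$ in $\FreeF[7]$ joining $A_i$ to $A_k$, the bounded geodesic image theorem \Cref{thm:BGI} forces this geodesic to pass $D'$-close to $A_j$ whenever $i \ll j \ll k$, since otherwise $\pi_{A_j}$ would have bounded image along $[A_i,A_k]$, contradicting $d_{A_j}(A_i, A_k) > D'$. Iterating, the geodesic $[A_0, A_m]$ fellow-travels the sequence $A_0, A_1, \dots, A_m$, up to the uniformly bounded jumps allowed by \Cref{lem:consecutive}, which gives a linear lower bound $d_{\FreeF[7]}(A_0, A_m) \ge c m - c'$; the upper bound $d_{\FreeF[7]}(A_i, A_{i+1}) \le \text{const}$ is immediate since $A_i$ and $A_{i+1}$ are disjoint free factors (\Cref{lem:i-j}), hence at bounded distance in $\FreeF[7]$.

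The main obstacle I expect is bookkeeping the interaction between the two lemmas to produce a genuine \emph{linear} (not merely proper) lower bound on distances: \Cref{lem:i-k projection} only controls projections for indices separated by at least $2$ from $j$, and \Cref{lem:consecutive} only gives that the ``escape'' happens within windows of length $109$. So one must argue that between any two indices the number of ``guaranteed large projections'' grows linearly --- roughly, partition $[0,m]$ into blocks of length $109$, extract from each block (via \Cref{lem:consecutive}) an index whose projection to it from the endpoints is large, and check these witnessing projections are mutually far enough apart that the BGI argument applies and the witnesses must appear in order along the geodesic. This is the standard but somewhat delicate ``no backtracking'' argument; once it is set up, the quasi-geodesic constants come out depending only on $n = 7$ and the fixed automorphisms, as required. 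The constant $R$ in the statement is then $R = r(K)$ from \Cref{lem:i-k projection} for the chosen $K$, intersected with the congruence condition $r_i \equiv i \bmod 7$ needed to apply \Cref{lem:consecutive}.
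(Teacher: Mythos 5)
Your overall architecture --- subfactor projections, \Cref{lem:i-k projection} to get large projection distances $d_{A_j}(A_i,A_k)\ge K-2D$ at every intermediate index, \Cref{thm:BGI} to detect these along a candidate geodesic, and \Cref{lem:i-j} for the bounded upper step --- is exactly the paper's. One imprecision first: \Cref{thm:BGI} does not force the geodesic to pass $D'$-close to $A_j$; its contrapositive only produces, for each $j\in\{i+2,\dots,k-2\}$, \emph{some vertex of $\gamma$ that is compatible with $A_j$}, since otherwise every vertex of $\gamma$ would project to $\F(A_j)$ and $\diam\,\pi_{A_j}(\gamma)\le D'$ would contradict $d_{A_j}(A_i,A_k)\ge K-2D>D'$.

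The genuine gap is that you have misplaced the role of \Cref{lem:consecutive}. It is not needed to guarantee well-defined mutual projections among the $A_i$'s (that is \Cref{lem:i-j} together with \Cref{lem:i-k projection}), nor to ``extract an index whose projection from the endpoints is large'' (again \Cref{lem:i-k projection}). It is needed exactly at the step you flag as the main obstacle: converting ``each $A_j$ has a compatible vertex on $\gamma$'' into a linear lower bound on the length of $\gamma$. Without it, nothing prevents a \emph{single} vertex of a short $\gamma$ from being compatible with arbitrarily long runs of the $A_j$'s, and no lower bound follows. The paper closes this by contradiction: with $K=4D+D'$, if $k-i\ge 110d+4$ but $\gamma$ has length $<d$, the pigeonhole principle yields one vertex $B$ of $\gamma$ compatible with at least $110$ of the $A_j$'s; \Cref{lem:consecutive} forbids $109$ consecutive such indices, so there are $i'<j'<k'$ with $j'-i',\,k'-j'\ge 2$ such that $B$ is compatible with $A_{i'}$ and $A_{k'}$ but meets $A_{j'}$; then statement (2) of \Cref{thm:subfactor projection} gives $d_{A_{j'}}(A_{i'},B)\le D$ and $d_{A_{j'}}(A_{k'},B)\le D$, hence $d_{A_{j'}}(A_{i'},A_{k'})\le 2D$, contradicting $K-2D>2D$. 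Your alternative ``ordered witnesses'' scheme could likely be carried out using the Behrstock-type inequality (statement (3) of \Cref{thm:subfactor projection}), but as written it invokes \Cref{lem:consecutive} for the wrong purpose and omits the counting step where that lemma is actually indispensable.
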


  \begin{proof} 

    Let $D$ be the constant of \Cref{thm:subfactor projection} and let $D'$
    be the constant of \Cref{thm:BGI}. Fix $K=4D+D'$. Let $R=r(K)$ be the
    constant of \Cref{lem:i-k projection}. Let $(r_i)_{i \ge 1}$ be any
    sequence with $r_i \ge R$ and $r_i \equiv i$ mod 7 for all $i$. We will
    show that the sequence $(A_i)_i$ goes to infinity with linear speed.
    More precisely, we will show that for any $d>0$, if $k-i \ge 110d +4$,
    then $d_{\FreeF[7]}(A_i,A_k) \ge d$. Suppose not. Let $\gamma$ be a
    geodesic between $A_i$ and $A_k$ of length $< d$. 
    
    For every $j \in \{i+2,\ldots,k-2\}$, there exists a free factor in
    $\gamma$ that is compatible with $A_j$. Indeed, if every free factor in
    $\gamma$ meets $A_j$, then by \Cref{thm:BGI}, projection of $\gamma$ to
    $A_j$ will be well-defined and has diameter bounded by $D'$. However,
    by \Cref{lem:i-k projection}, the projections of $A_i$ and $A_k$ to
    $\F(A_j)$ has distance at least $K - 2D > D'$. 
    
    By the pigeonhole principle, there exists a vertex $B$ of $\gamma$
    compatible with at least 110 free factors among
    $\{A_{i+2},\ldots,A_{k-2}\}$. By \Cref{lem:consecutive}, it is not
    possible for $B$ to be compatible with 109 consecutive $A_j$'s.
    Therefore, it must be possible to find $i+2 \le i' < j' < k' \le k-2$
    with $j' - i' \ge 2$ and $k' - j' \ge 2$, such that $B$ is compatible
    with $A_{i'}$ and $A_{k'}$, but $B$ meets $A_{j'}$. In particular,
    $\pi_{A_{j'}}(B)$ is defined. By \Cref{lem:i-k projection}, $A_{i'}$,
    $A_{k'}$ also have well-defined projections to $\F(A_{j'})$ with
    $d_{A_{j'}}(A_{i'},A_{k'}) \ge K-2D > 2D$. On the other hand, since $B$
    is compatible with both $A_{i'}$ and $A_{k'}$, we have
    $d_{A_{j'}}(A_{i'},B) \le D$ and $d_{A_{j'}}(A_{k'},B)\le D$ by
    \Cref{thm:subfactor projection}. This is a contradiction, finishing the
    proof that $d_{\FreeF[7]}(A_i, A_k) \ge d$ for all $k-i \ge 110d+4$.
    \qedhere 
  
  \end{proof}
  
  Recall that $\FreeF$ is Gromov hyperbolic and that its Gromov boundary is
  the space of equivalence class of arational trees. Also recall we say a
  folding sequence $(G_i)_i$ \emph{converges to an arational tree} $T$, if
  $\pi(U_i)$ converges to $[T] \in \partial \FreeF$, where $U_i$ is the
  open simplex in in $\CV$ associated to $G_i$. We have the following
  corollary.

  \begin{cor} \label{cor:arational}

    Given any strictly increasing sequence $(r_i)_{i \geq 1}$ satisfying $r_i
    \equiv i \mod 7$, the folding sequence $(\tau_i')_i$ converges to
    an arational tree $T$. 
  \end{cor}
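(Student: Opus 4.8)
The plan is to deduce \Cref{cor:arational} from \Cref{prop:qi-geodesic} together with the identification of $\partial\FreeF[7]$ with $\AT/\!\sim$ from \cite{BR:FFBound,H}. The only gap between the two statements is that \Cref{prop:qi-geodesic} assumes $r_i \ge R$ for the constant $R = r(K)$ coming from \Cref{lem:i-k projection}, whereas \Cref{cor:arational} only assumes that $(r_i)_i$ is strictly increasing with $r_i \equiv i \bmod 7$; also, $\partial\FreeF$ records equivalence classes of arational trees rather than trees on the nose, so one still has to produce an honest tree $T$ with $\pi(U_i)\to[T]$.

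First I would observe that a strictly increasing sequence $(r_i)_i$ eventually satisfies $r_i \ge R$: there is some $i_0$ with $r_i \ge R$ for all $i \ge i_0$. The tail sequence $(r_i)_{i \ge i_0}$ still satisfies the congruence hypothesis, so \Cref{prop:qi-geodesic} applies to it and shows that $(A_i)_{i \ge i_0}$ is a quasi-geodesic in $\FreeF[7]$. Since prepending finitely many vertices $A_0,\dots,A_{i_0-1}$ to a quasi-geodesic ray changes it only by a bounded amount near the basepoint, $(A_i)_{i\ge 0}$ is again a quasi-geodesic ray (possibly with worse constants, which is irrelevant). Alternatively, and more cleanly, one can simply run the whole argument on the tail and note that the limit point in $\partial\FreeF[7]$ does not depend on the initial segment.

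Next, since $\FreeF[7]$ is Gromov hyperbolic, a quasi-geodesic ray $(A_i)_i$ converges to a well-defined point $\xi \in \partial\FreeF[7]$, and by \cite{BR:FFBound,H} this boundary point is the equivalence class $[T]$ of some arational tree $T \in \AT$. It remains to check that $\pi(U_i) \to [T]$ in $\partial\FreeF[7]$, where $U_i$ is the open simplex associated to $\tau_i'$. By definition $A_i \in \pi(U_i)$, and $\pi(U_i)$ has uniformly bounded diameter in $\FreeF[7]$ (as recalled at the end of \Cref{sec:arational trees}, since $\pi$ is coarsely well-defined on simplices). Hence $\pi(U_i)$ lies within bounded Hausdorff distance of $A_i$, and since $A_i \to \xi = [T]$, we get $\pi(U_i) \to [T]$ as well. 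By the definition of "converges to an arational tree" recalled just before the corollary, this says exactly that the folding sequence $(\tau_i')_i$ converges to the arational tree $T$, completing the proof.

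I do not expect a genuine obstacle here: this corollary is essentially bookkeeping that packages \Cref{prop:qi-geodesic} into the language used by \Cref{prop:IsoLength}. The one point requiring a word of care is the hypothesis mismatch ($r_i \ge R$ versus merely strictly increasing), handled by passing to a tail; and the mild subtlety that what converges is the \emph{equivalence class} $[T]$, so $T$ is only determined up to its length measures — but that is precisely the flexibility the later sections exploit, so stating the corollary with "an arational tree $T$" is appropriate.
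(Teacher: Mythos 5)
Your proposal is correct and is essentially the argument the paper intends: the corollary is stated without proof, as an immediate consequence of \Cref{prop:qi-geodesic} combined with Gromov hyperbolicity of $\FreeF[7]$, the identification of $\partial\FreeF[7]$ with $\AT/\!\sim$ from \cite{BR:FFBound,H}, and the fact that $A_i\in\pi(U_i)$ with $\pi(U_i)$ of uniformly bounded diameter. Your explicit handling of the hypothesis mismatch (passing to a tail where $r_i\ge R$, which is harmless since the underlying lemmas work for any starting index and the boundary limit of a quasi-geodesic ray is unaffected by a finite initial segment) is a point the paper glosses over, and is correctly resolved.
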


  \subsection{Non-geometric tree}

  \label{sec:trees}

  We will now show that the arational tree obtained in the previous
  section as the limit of the free factors $(A_i)_i$ is
  non-geometric. This section will use the terminology of band
  complexes and resolutions, for details see \cite{BF:Stable}.

  \begin{definition}[Geometric tree]
    \cite{BF:OuterLimits, levitt-paulin}\label{defn:geometric tree}
    Let $X$ be a band complex and $T$ a $G=\pi_1(X)$-tree. A resolution
    $f :\widetilde{X} \to T$ is \emph{exact} if for every $G$-tree
    $T'$ and equivariant factorization 
    \[
      \widetilde{X} \stackrel{f'}{\longrightarrow} T' 
      \stackrel{h}{\longrightarrow} T
    \] 
    of $f$ with $f'$ a surjective resolution it follows that $h$ is an
    isometry onto its image. We say $T$ is \emph{geometric} if every
    resolution is exact.
  \end{definition}

  The proof of the following proposition is based on \cite[Proposition
    3.6]{BF:OuterLimits}.

  \begin{prop}\label{prop:nongeometric}
    For any strictly increasing sequence $(r_i)_{i \ge 1}$, if the
    corresponding folding sequence $(\tau_i')_i$ converges to an arational
    tree $T$, then $T$ is not geometric. 
  \end{prop}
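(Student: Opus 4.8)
The plan is to adapt the strategy of \cite[Proposition 3.6]{BF:OuterLimits} to our folding sequence. The key structural input is that a limit tree of a folding sequence comes with a canonical resolution by a band complex built from the sequence, and that geometricity is an exactness property of this resolution. Suppose, for contradiction, that $T$ is geometric. Then the canonical resolution $f\colon \widetilde X\to T$ coming from the folding sequence is exact. The band complex $X$ here can be taken to be (a quotient of) the mapping telescope of the maps $\psi_{r_i}$, so that bands correspond to edges of the roses $\tau_i'$ and the resolution records how edge-paths are carried forward along the folding sequence. Geometricity then forces a strong finiteness/rigidity statement: roughly, that the tree $T$ is obtained from the band complex with no "infinitely thin" pieces surviving, which will contradict the fact that illegal turns are repeatedly destroyed along the sequence.

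The heart of the argument is to exploit \Cref{lem:lose illegal turns}. I would argue as follows. If $T$ were geometric, then (as in \cite{BF:OuterLimits}) there would be a leaf of the dual lamination, or equivalently a reduced bi-infinite path carried by the band complex, that survives to the limit and is realized by an actual path in each $\tau_i'$ whose images stabilize in a controlled way. Concretely, geometricity of $T$ would provide a nondegenerate arc in $T$ whose full preimage in the band complex is "supported" on a finite subcomplex, which translates to: there is an edge-path $\beta$ in some $\tau_j'$ with at least one illegal turn such that, for all $m$, the path $[\psi_{r_{j+m}}\cdots\psi_{r_{j+1}}(\beta)]$ retains at least one illegal turn (the illegal turn being forced by the arc in $T$ it maps onto, since an arc mapping injectively would have to come from a legal path, and legal paths embed). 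But \Cref{lem:lose illegal turns} says that every three steps (once the $r_i$ are $\equiv 0 \bmod 3$ and large — which holds for a tail of any strictly increasing sequence, and the non-geometricity of $T$ only depends on a tail) the number of illegal turns in $[\psi_{r_{j+3}}\psi_{r_{j+2}}\psi_{r_{j+1}}(\beta)]$ strictly drops. Iterating, after finitely many steps the image of $\beta$ becomes legal, hence embeds, contradicting the persistence of an illegal turn.

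The main obstacle, and the step requiring the most care, is the translation between the band-complex/resolution language of \Cref{defn:geometric tree} and the combinatorics of illegal turns in the folding sequence — that is, showing precisely that failure of exactness of the canonical resolution is detected by, and detects, an edge-path with a persistently illegal turn. This is where I would lean on \cite[Proposition 3.6]{BF:OuterLimits}: there, geometricity is shown to be equivalent to the train-track map having no "vanishing" behavior, and the obstruction is exactly an INP-like or illegal configuration that the map fails to eventually resolve. In our setting $\psi_{r_i}$ is not a single train-track map but a composition of varying ones, so one must check that the relevant finiteness argument still goes through; the content of \Cref{lem:vartheta train track} (that $\vartheta$ has no periodic INPs) together with \Cref{lem:lose illegal turns} is precisely what makes the "no persistently illegal path" conclusion available here. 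Once that dictionary is set up, the contradiction is immediate: an exact resolution would need to faithfully carry some nonlegal configuration to the limit, but every nonlegal configuration is eventually annihilated, so $T$ cannot be geometric.
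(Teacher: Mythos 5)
Your instinct to combine \cite[Proposition 3.6]{BF:OuterLimits} with \Cref{lem:lose illegal turns} is the right one, but the pivotal step of your argument --- the ``dictionary'' asserting that geometricity of $T$ would produce an edge path $\beta$ in some $\tau_j'$ with a \emph{persistently} illegal turn --- is exactly the part you leave unproved, and it is not the mechanism that actually makes the proof work. The paper's argument does not extract anything from the assumption of geometricity; it proceeds unconditionally. Given \emph{any} finite band complex $X$ with resolution $f\from \widetilde X\to T$, one chooses an equivariant map $f_0$ on the (finitely many orbits of) vertices of $\widetilde X$ into $\tilde\tau_0'$ with $f=h_0f_0$ on vertices; since there are only finitely many edge orbits, \Cref{lem:lose illegal turns} guarantees that for some finite $i$ every edge path in $\tilde\tau_i'$ joining the images of the endpoints of an edge of $\widetilde X$ is legal, so one can extend to a resolution $f_i\from\widetilde X\to\tilde\tau_i'$ sending edges to legal paths and constant on leaves. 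This produces a factorization $f = h_i\circ f_i$ in which $h_i\from\tilde\tau_i'\to T$ is \emph{not} an isometry (because $T$ is arational, hence not simplicial, the morphisms $h_i$ keep folding), which is precisely the failure of exactness in \Cref{defn:geometric tree}. Your proposal never constructs this factorization and never invokes the key fact that $h_i$ fails to be an isometry; instead it posits an equivalence between geometricity and the survival of an illegal configuration, which is neither established nor needed, and whose forward direction (``geometric $\Rightarrow$ persistently illegal path'') is not something \cite{BF:OuterLimits} hands you off the shelf.

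Two smaller points. First, your parenthetical that the hypotheses of \Cref{lem:lose illegal turns} (``$r_i\equiv 0 \bmod 3$ and $r_1>R$'') hold ``for a tail of any strictly increasing sequence'' is false for the congruence condition: a strictly increasing sequence need contain no multiples of $3$ at all. (The congruence is imposed in the Main Theorem, so this is a hypothesis to be carried along, not deduced.) Second, your claim that an arc of $T$ ``mapping injectively would have to come from a legal path'' conflates the direction you need: what the argument uses is that legal paths in $\tilde\tau_i'$ are embedded by $h_i$, so that $f_i$ as constructed really is a resolution; it does not need, and should not assert, a converse.
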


  \begin{proof}
    
    Let $\tilde\psi_i \from \tilde\tau_{i-1}' \to \tilde\tau_i'$ be a lift
    of the train track map to the universal covers fixing a base
    vertex. Pick a length measure on $(\tau_i')_i$ so we get a folding
    sequence $\tilde \tau_0' \stackrel{\tilde \psi_1}{\longrightarrow}
    \tilde\tau_1' \stackrel{\tilde \psi_2}{\longrightarrow} \cdots$ in
    $\cv[7]$ that converges to $T$.  Recall that there are morphisms
    $h_i \from \tilde{\tau}_i' \to T$ such that $h_i = h_{i+1} \tilde
    \psi_{i+1}$. Since $T$ is arational, $h_i$'s are not isometries though
    they restrict to isometries on edges. Let $X$ be a finite band
    complex with resolution $f \from \tilde X \to T$. We will show that
    the resolution factors through $\tilde\tau_i'$ for sufficiently
    large $i$. This will imply $T$ is not geometric.

    Let $\Gamma$ be the underlying real graph of $X$ (disjoint union
    of metric arcs) with preimage $\tilde\Gamma$ in $\tilde X$.
    We may assume $f$ embeds the components of $\tilde\Gamma$. A vertex $v$
    of $\tilde X$ is either a vertex of $\tilde\Gamma$ or a corner of a
    band or a 0-cell of $\tilde X$. For every such vertex $v$ choose a
    point $f_0(v) \in \tilde\tau_0$ so that $f_0$ is equivariant and $f=h_0
    f_0$ on the vertices of $\tilde X$.  

    An edge in $\tilde X$ is either a subarc of $\tilde\Gamma$ or a
    vertical boundary component of a band or a 1-cell in $\tilde X$. Up to
    the action of $\free[7]$, there are only finitely many edges. Using
    \Cref{lem:lose illegal turns}, we can find $i>0$ such that for every
    edge $e$ in $\tilde X$, the edge path in $\tilde\tau_i'$ joining the
    two vertices of $\tilde\psi_i \cdots \tilde\psi_1 f_0(\partial e)$ is
    legal. Now extend $\tilde\psi_i \cdots \tilde\psi_1 f_0$ to an
    equivariant map $f_i \from \tilde X \to \tilde\tau_i'$ that sends edges
    to legal paths (or points) and is constant on the leaves. Thus $f_i$ is
    a resolution of $\tilde\tau_i'$.
    \[\begin{tikzcd}
      &  & 
      \widetilde{X}\arrow[rd, bend left, "f_i"] \arrow[lld, bend right, "f_0"]
      \arrow[rrrd, bend left, "f"]\\ 
      \tilde{\tau}_0' \arrow[r, "\psi_1"] 
      & \tilde{\tau}_1' \arrow[r, "\psi_2"] 
      & \cdots \cdots  \arrow[r, "\psi_i"] 
      & \tilde{\tau}_i' \arrow[r, "\,\, \psi_{i+1}"] \arrow[rr, bend left, "h_i"]
      & \cdots 
      & T 
    \end{tikzcd}\]
    This yields a factorization
    \[\begin{tikzcd}
      \tilde X \arrow[r, "f_i"]  & \tilde\tau_i' \arrow[r, "h_i"] & T  
    \end{tikzcd}\]
    but $h_i$ is not an isometry. This shows $T$ is non-geometric. \qedhere 
  \end{proof}

\section{Non-uniquely ergodic unfolding sequence}

  \label{sec:unfolding}

  The goal of this section is to show that if a sequence $(r_i)_{i \ge 1}$
  grows sufficiently fast, then the set of currents supported on the legal
  lamination $\Lambda$ of the unfolding sequence $(\tau_i)_{i \geq 0}$
  is a 1-simplex in $\PCurr[7]$. 
  
  \medskip 
  
  Recall that $M_r$ is a $7\times 7$ matrix of the block form
  \[
    \begin{pmatrix}
      0&I\\
      B^r&0
    \end{pmatrix}
    \]
  where $I$ is the $4\times 4$ identity matrix, and $B$ is the transition
  matrix of $\theta$; all that matters is that some positive power of $B$
  has all entries positive. Let $\lambda_B$ be the Perron-Frobenius
  eigenvalue of $B$. Recall the constant $\kappa_B > 0$ from
  \Cref{lem:Y}. Given a sequence $(r_i)_i$, define for each $i \ge 1$ 
  \[
    P_i = \frac{1}{\kappa_B \lambda_B^{r_{i+1}}} M_{r_i} M_{r_{i+1}}.
  \]
  Let $\{e_k:k=1,\ldots,7\}$ be the standard basis for $\R^7$. Denote by
  $\mathbb P\R_{\geq 0}^7$ the projectivization of $\R_{\geq 0}^7$, and the
  projective class of a vector $v$ by $[v]$. Fix a metric $d$ on
  $\P\R_{\geq 0}^7$. We view $M_r$ as a projective transformation
  $\P\R_{\geq 0}^7\to \P\R_{\geq 0}^7$. For a sequence $(r_i)_{i \geq  1}$
  and for $i<j$ denote by $S_{i,j}\subset\mathbb P \R_{\geq 0}^7$ the image
  of the composition $$M_{ij}:=M_{r_i}M_{r_{i+1}}\cdots M_{r_j}$$ and by
  $S_i=\bigcap_{j>i}S_{i,j}$. We denote by $v_B$ a positive
  Perron-Frobenius eigenvector of $B$, and by $v_B^{234}$ (resp.\ 
  $v_B^{567}$) the vector in $\R^7$ which is $v_B$ in coordinates $2,3,4$
  (resp. $5,6,7$) and 0 in all other coordinates. The main result of this
  section is the following.

  \begin{prop}\label{prop:2currents}
    Let $(r_i)_{i \ge 1}$ be a sequence of positive integers with
    $r_{i+1}-r_i \ge i$. Then for all $i$ the set $S_i$ is a 1-simplex,
    i.e.\ it is the convex hull of two distinct points $p_i,q_i\in\mathbb P
    \R_{> 0}^7$. Moreover, as $i\to\infty$, $\{p_i,q_i\}$ converges (as a
    set) to $\{ [v_B^{234}], [v_B^{567}]\}$. \end{prop}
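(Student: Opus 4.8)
The plan is to study the projective dynamics of the maps $M_r$ on $\mathbb P\R_{\ge 0}^7$, using the block structure $M_r = \left(\begin{smallmatrix}0 & I\\ B^r & 0\end{smallmatrix}\right)$ together with the asymptotic computation of \Cref{lem:Y}. The key observation is that $M_{r_i}M_{r_{i+1}}$ has a simple limiting shape once $r_i$ and $r_{i+1}-r_i$ are large: after rescaling by $\kappa_B\lambda_B^{r_{i+1}}$ it converges to the rank-one idempotent $Y$ of \Cref{lem:Y}, whose image is the single point $[u] = [v_B^{234}]$ (up to identifying coordinates). So the even partial products $M_{r_i}M_{r_{i+1}}\cdots M_{r_j}$ (with $j-i$ even) are, in the limit, compositions of near-rank-one maps, forcing the nested images to collapse. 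The subtlety is that we are \emph{not} passing to a subsequence or to a fixed number of factors: we want all of $S_i=\bigcap_{j>i}S_{i,j}$, and $S_{i,j}$ for $j-i$ odd has a different limiting shape.

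First I would set up the two-color bookkeeping: write $M_{ij}$ as a product of the matrices $P_k = \tfrac{1}{\kappa_B\lambda_B^{r_{k+1}}}M_{r_k}M_{r_{k+1}}$ (for $k$ of the appropriate parity), since projectively the scalars are irrelevant. By \Cref{lem:Y}, under the hypothesis $r_{k+1}-r_k\ge k$ (which sends both $r_k\to\infty$ and $r_{k+1}-r_k\to\infty$), each $P_k\to Y$. The matrix $Y$ is idempotent of rank one in the relevant block, so $\mathrm{Im}(Y)$ in $\mathbb P\R_{\ge 0}^7$ is a single point. Then I would invoke the matrix-product convergence lemma promised in \Cref{sec:appendix}: a product $P_{i}P_{i+2}\cdots$ of matrices each converging to a fixed rank-one idempotent $Y$, with controlled error terms, has image shrinking to $\mathrm{Im}(Y)$; more precisely the nested images $\bigcap_j \mathrm{Im}(P_iP_{i+2}\cdots P_j)$ converge to $\{[u]\}$. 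Applying this to the even-index and odd-index subproducts separately, and noting that $M_{ij}$ for the two parities of $j$ factors (up to one leftover $M_{r_j}$) as such an even product followed by either nothing or a single extra $M_{r_j}$, I get that $S_i$ is contained in the union of two small neighborhoods: one around $\mathrm{Im}(Y)$, coming from products ending in an even number of factors, and one around $M_{r_j}(\mathrm{Im}(Y))$-type limits. Using the companion computation $M_\infty Y$ from \Cref{lem:MYMZ}, which lands on $[v_B^{567}]$, the second cluster converges to $[v_B^{567}]$. Hence $S_i$ accumulates only on $\{[v_B^{234}],[v_B^{567}]\}$.

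Next I would upgrade ``accumulates on'' to ``is the convex hull of $p_i,q_i$ with $p_i,q_i\to v_B^{234},v_B^{567}$.'' Here I use that each $S_{i,j}$ is the image of a simplex under a linear (projectively affine) map, hence is itself a projective simplex (convex cell); a nested intersection of such cells is convex. Since $S_i$ is convex and accumulates only on two points, $S_i$ must be either a single point, or a segment joining one point near $[v_B^{234}]$ to one point near $[v_B^{567}]$. To rule out degeneration to a point, I would exhibit, for each fixed $i$ and all large $j$, two points in $S_{i,j}$ that stay a definite distance apart: take the images under $M_{ij}$ of two standard basis directions feeding the two color classes — concretely the images of $e_2$-type and $e_5$-type vectors — and show via the block structure (the ``$0\ I$ / $B^r\ 0$'' swap interchanges the coordinate blocks $\{2,3,4\}$ and $\{5,6,7\}$) that one converges to $[v_B^{234}]$ and the other to $[v_B^{567}]$ along the $j$-subsequence. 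Taking the intersection over $j$, the two limit points $p_i=[v_B^{234}]$-direction survivor and $q_i=[v_B^{567}]$-direction survivor lie in $S_i$ and are distinct; positivity ($p_i,q_i\in\mathbb P\R_{>0}^7$) follows because $B$ has a positive power, so the surviving vectors have all coordinates positive once enough factors are applied.

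The main obstacle I anticipate is the uniformity in the matrix-product lemma: the $P_k$ converge to $Y$ but the rate depends on $r_k$ and $r_{k+1}-r_k$, so I need the error terms to be summable (or at least go to zero fast enough that an infinite product of ``$Y+\varepsilon_k$'' still collapses to a single point projectively). The hypothesis $r_{i+1}-r_i\ge i$ is presumably exactly what guarantees $\sum_k \|\varepsilon_k\| < \infty$ (the PF convergence $B^s/\lambda_B^s\to P$ is geometric, so $\varepsilon_k$ is exponentially small in $\min(r_k, r_{k+1}-r_k)\gtrsim k$), and this is the content of the appendix lemma. A secondary technical point is that $Y$ is idempotent but \emph{not} a projection in the metric sense, so ``image shrinks to $\mathrm{Im}(Y)$'' needs the observation that $Y$ restricted to the positive cone is an honest contraction toward its one-dimensional image (Perron–Frobenius / Birkhoff-cone contraction), which is where I'd use that $(u_1,u_2,u_3)$ is a positive PF eigenvector. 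Modulo citing the appendix lemma, the rest is bookkeeping with the block form and the limits recorded in Lemmas~\ref{lem:Y}, \ref{lem:Z}, and \ref{lem:MYMZ}.
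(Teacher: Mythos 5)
Your proposal follows essentially the same route as the paper's proof: write $M_{ij}$ (projectively) as a product of the pair-matrices $P_k \to Y$, invoke the appendix convergence lemma (\Cref{lem:convergence}) to get limits $Y_i$ of the even/odd subproducts, locate the two clusters via $Y(e_1)$ and $M_\infty Y(e_1)$ using \Cref{lem:Y} and \Cref{lem:MYMZ}, and use nestedness and convexity of the $S_{i,j}$ to identify $S_i$ with the segment joining $p_i$ and $q_i$. Two points need tightening. First, the containment claim must be made about the seven vertices $[M_{ij}(e_k)]$ of $S_{i,j}$, each of which lands near $p_i$ or $q_i$ for large $j$; as written, ``$S_i$ is contained in the union of two small neighborhoods'' together with convexity of $S_i$ would force $S_i$ into a single one of those neighborhoods, i.e.\ collapse it to (nearly) a point. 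The correct intermediate statement is that $S_{i,j}$ lies in a small neighborhood of the segment $[p_i,q_i]$, while $p_i,q_i\in S_i$ because they are limits of vertices of the nested closed sets. Second, for fixed $i$ the vertices converge as $j\to\infty$ to $p_i=[Y_i(e_1)]$ and $q_i=[M_{r_i}Y_{i+1}(e_1)]$, not to $[v_B^{234}]$ and $[v_B^{567}]$ themselves, so your ``definite distance apart'' argument for distinctness only applies for large $i$ (where $Y_i\approx Y$ and $M_{r_i}Y_{i+1}/\lambda_B^{r_i}\approx M_\infty Y$ give near-orthogonal vectors); for small $i$ one must propagate distinctness down the sequence using invertibility of the $M_r$, which is how the paper's \Cref{lem:pairprod1}(4) reduces to the large-$i$ case.
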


  Before we give a technical proof of \Cref{prop:2currents}, we will give a
  simpler, more intuitive proof where the sequence $r_1<r_2<\cdots$ is
  chosen inductively so that $r_1$ is sufficiently large and each $r_i$ is
  sufficiently large depending on $r_1,r_2,\cdots,r_{i-1}$. Later, we do a
  more careful analysis where we can control the growth of the sequence.   
  
  \begin{proof}[Proof idea of \Cref{prop:2currents}]
    For $\epsilon>0$ we will write $x\overset{\epsilon}=y$ if
    $d(x,y)<\epsilon$ in $\P\R_{\geq 0}^7$.
    Each $S_{ij}$ is the convex hull of the $M_{ij}$-images of the vectors
    $e_i$, $i=1,\cdots,7$. The proof consists of computing these
    images using the Perron-Frobenius dynamics.
    We first observe that there is a sequence $\epsilon_r\to 0$ such
    that:
    \begin{itemize}
      \item $M_r(e_7)=e_4$, $M_r(e_6)=e_3$, $M_r(e_5)=e_2$,
        $M_r(e_4)=e_1$,
      \item $M_r(e_i)\overset{\epsilon_r}=v_B^{567}$, $i=1,2,3$,
        \item $M_r(v_B^{567})=v_B^{234}$,
          $M_r(v_B^{234})\overset{\epsilon_r}=v_B^{567}$.
    \end{itemize}

    Next, we consider the composition $M_sM_r$ for $r>>s$. The third
    bullet uses uniform continuity of $M_s$ and the assumption that
    $r$ is sufficiently large compared to $s$.
    \begin{itemize}
    \item $M_sM_r(e_7)=e_1$,
    \item $M_sM_r(e_i)\overset{\epsilon_s}=v_B^{567}$, $i=4,5,6$,
    \item $M_sM_r(e_i)\overset{\epsilon_s}=v_B^{234}$, $i=1,2,3$.
    \end{itemize}
    Finally, for $r>>s>>t$ we see similarly:
    \begin{itemize}
    \item $M_tM_sM_r(e_7)\overset{\epsilon_t}=v_B^{567}$,
      \item $M_tM_sM_r(e_i)\overset{\epsilon_t}=v_B^{234}$,
        $i=4,5,6$,
      \item $M_tM_sM_r(e_i)\overset{\epsilon_t}=v_B^{567}$,
        $i=1,2,3$.
    \end{itemize}

    It follows that if we make suitably large choices for the $r_i$'s, the
    set $S_{i,i+3}$ will be contained in the $\epsilon_{r_i}$-neighborhood
    of the 1-simplex $[v_B^{567},v_B^{234}]$. Moreover, given any
    $\epsilon>0$ and $j>i+3$ we can choose $r_j$ large (depending on
    uniform continuity constants of $M_{ij}$) to ensure that
    $S_{i,j+3}=M_{ij}(S_{j,j+3})$ is contained in the
    $\epsilon$-neighborhood of the 1-simplex with endpoints
    $M_{ij}(v_B^{567})$ and $M_{ij}(v_B^{234})$. Thus each $S_i$ is the
    nested intersection of simplices of dimension $\leq 6$ such that for
    all $\epsilon>0$ they are eventually all contained in the
    $\epsilon$-neighborhood of a 1-simplex with definite distance between
    the endpoints. This proves the Proposition.
  \end{proof}

  We now present a more detailed proof of \Cref{prop:2currents}. 
  For a sequence of integers $(r_i)_{i\geq 1}$ such that $r_i, r_{i+1}-r_i
  \to \infty$, by \Cref{lem:Y} $(P_i)_i$ converges to an idempotent matrix
  $Y$.  Let $\Delta_i = Y - P_i$ and let $||Y||$ be the operator norm. 

  \begin{lemma}\label{lem:error bound} 
    Let $(r_i)_{i\geq 1}$ be a sequence of positive integers such that
    $r_{i+1}-r_i \ge i$. Then there exists an $I \geq 1$, such that for all
    $i \geq I$, $||\Delta_i|| \leq 1/(2 \cdot 2^i)$. 
  \end{lemma}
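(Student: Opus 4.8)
The plan is to exploit the rigid block shape of $M_{r_i}M_{r_{i+1}}$ recorded in the proof of \Cref{lem:Y} and to compare it entrywise with $Y$. Write $r=r_i$, $s=r_{i+1}$, set $N_i=\lambda_B^{-s}M_rM_s$, so that $P_i=N_i/\kappa_B$. By that proof, after a permutation of rows and of columns $N_i$ is block diagonal with three nonzero blocks, namely $B^{s}/\lambda_B^{s}$ (at rows $2,3,4$ and columns $1,2,3$), $B^{r}/\lambda_B^{s}$ (at rows $5,6,7$ and columns $4,5,6$), and the $1\times1$ block $(\lambda_B^{-s})$ (at row $1$, column $7$); its limit $L$ keeps only the first block, equal to $P=\lim_mB^m/\lambda_B^m$, and satisfies $L^2=\kappa_BL$, whence $Y=L/\kappa_B$. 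Consequently $\kappa_B\,\Delta_i=L-N_i$ has the same pairwise‑disjoint row/column supports, so its operator norm is simply the maximum of the norms of the three blocks:
\[
\|\Delta_i\|=\tfrac1{\kappa_B}\max\Bigl(\bigl\|P-\tfrac{B^{s}}{\lambda_B^{s}}\bigr\|,\ \tfrac{\|B^{r}\|}{\lambda_B^{s}},\ \lambda_B^{-s}\Bigr).
\]
So everything reduces to estimating these three scalars.

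For the first term I would use the spectral decomposition $B=\lambda_BP+S$ with $PS=SP=0$. Since $B$ is primitive, $\lambda_B$ is a simple dominant eigenvalue and the nonzero eigenvalues of $S$ are exactly the subdominant eigenvalues of $B$, all of modulus $\lambda_*<\lambda_B$; then $B^m=\lambda_B^mP+S^m$ and $P-B^{s}/\lambda_B^{s}=-S^{s}/\lambda_B^{s}$, with $\|S^m\|\le C\rho^m$ for any fixed $\rho\in(\lambda_*,\lambda_B)$, so the first term is $\le C(\rho/\lambda_B)^{s}$. The same identity gives $\|B^{r}\|\le\|P\|\,\lambda_B^{r}+C\rho^{r}\le C'\lambda_B^{r}$, so the second term is $\le C'\lambda_B^{-(s-r)}$, and the third is just $\lambda_B^{-s}$.

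It then remains to feed in the hypothesis. Iterating $r_{k+1}-r_k\ge k$ and using $r_1\ge1$ yields $r_{i+1}\ge 1+\binom{i+1}{2}$, so $(\rho/\lambda_B)^{r_{i+1}}$ and $\lambda_B^{-r_{i+1}}$ decay faster than any fixed geometric rate and are in particular at most $\tfrac14\cdot 2^{-i}$ once $i$ is large, while the remaining (dominant) term is $\le C'\lambda_B^{-(r_{i+1}-r_i)}\le C'\lambda_B^{-i}$; the growth condition $r_{i+1}-r_i\ge i$ is exactly what forces this last term, and hence $\|\Delta_i\|$, below $\tfrac1{2\cdot2^i}$ for all $i$ beyond some threshold $I$. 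The one nontrivial ingredient is the rate estimate $\|S^m\|\le C\rho^m$, i.e.\ turning primitivity of $B$ into a quantitative spectral gap; this is handled by the Jordan form of $S$, or by Gelfand's formula $\|S^m\|^{1/m}\to\lambda_*$. After that the proof is just bookkeeping of exponentials against the prescribed rate, and I expect that bookkeeping — together with pinning down the spectral gap constant $\rho$ — to be the main (though routine) obstacle.
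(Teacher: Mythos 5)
Your reduction is sound up to the last step, and it is essentially the paper's own route with the constants made explicit: the three disjointly supported blocks of $L-N_i$, the identification $Y=L/\kappa_B$ from $L^2=\kappa_B L$, and the spectral splitting $B^m=\lambda_B^mP+S^m$ with $\|S^m\|\le C\rho^m$ are all correct (the paper's displayed bound $\max\bigl(\mu^{r_{i+1}}/\lambda^{r_{i+1}},\ \lambda^{r_i}/\lambda^{r_{i+1}}\bigr)$ is exactly your first two blocks with multiplicative constants suppressed). The two terms governed by $r_{i+1}$ alone are indeed negligible, since $r_{i+1}\ge 1+\binom{i+1}{2}$ grows quadratically.

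The gap is the final sentence. You need $C'\lambda_B^{-(r_{i+1}-r_i)}\le \tfrac{1}{2\cdot 2^{i}}$, and the hypothesis only gives $C'\lambda_B^{-(r_{i+1}-r_i)}\le C'\lambda_B^{-i}$. But $\lambda_B\approx 1.4656<2$ (the real root of $t^3=t^2+1$), so $\lambda_B^{-i}/2^{-i}=(2/\lambda_B)^{i}\to\infty$: the dominant block decays strictly slower than $2^{-i}$, and no threshold $I$ rescues the inequality. This is not merely a loss in your upper bound: since $\|B^{r_i}\|\ge\lambda_B^{r_i}$, the middle block forces $\|\Delta_i\|\ge \lambda_B^{-(r_{i+1}-r_i)}/\kappa_B$, so for $r_{i+1}-r_i=i$ exactly the conclusion of the lemma fails. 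One needs $r_{i+1}-r_i\ge\alpha i$ with $\alpha>\log 2/\log\lambda_B\approx 1.81$ (e.g.\ $r_{i+1}-r_i\ge 2i$), or else the target rate $2^{-i}$ must be relaxed to $\sigma^{i}$ for some $\sigma\in(\lambda_B^{-1},1)$, with the corresponding adjustment in \Cref{lem:convergence}. For comparison, the paper's proof stalls at the same point: it reduces the claim to $2\le\lambda_B^{(r_{i+1}-r_i)/(i+1)}$ but only establishes $(r_{i+1}-r_i)/(i+1)\ge i/(i+1)$, which yields $\lambda_B^{(r_{i+1}-r_i)/(i+1)}\ge\lambda_B^{i/(i+1)}\to\lambda_B<2$. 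So your write-up reproduces, rather than introduces, the defect; but as a proof of the lemma as stated it does not close.
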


  \begin{proof}
    Let $\lambda_B, \mu_B, \mu'_B$ be the modulus of the three
    eigenvalues of $B$; we have $\lambda_B \sim 1.46$ and $\mu_B = \mu'_B
    \sim 0.826$. Then \[||\Delta_i|| = || P_i - Y || \leq \text{ max }
    \left( \frac{\mu^{r_{i+1}}}{\lambda^{r_{i+1}}},
    \frac{\lambda^{r_{i}}}{\lambda^{r_{i+1}}} \right) \leq
    \frac{\lambda^{r_{i}}}{\lambda^{r_{i+1}}} \]
    where the two terms comes from the two blocks in $P_i$. For the last
    inequality, note that $\mu < 1 < \lambda$ and $r_i$ are positive
    integers. Therefore,  $\mu^{r_{i+1}} < 1 < \lambda^{r_i}$. 
   
    Now we claim that there exists an $I \geq 1$, such that for all $i\geq I$, 

   \[ \frac{\lambda^{r_{i}}}{\lambda^{r_{i+1}}} \leq \frac{1}{2^{i+1}}
    \qquad \text{ equivalently, } \qquad 2 \leq
    \lambda^{\frac{r_{i+1}-r_i}{i+1}}\]
   
    We only need to show that the sequence $\frac{r_{i+1}-r_i}{i+1}$ is
    eventually increasing. Indeed, by assumption, $r_{i+1}-r_i \ge i$, so 
    \begin{align*}
      i  & \leq r_{i+1}-r_i \\
      \frac{ i}{i+1}  & \leq \frac{r_{i+1}-r_i}{i+1} 
    \end{align*}
    Since $i/(i+1)$ is an increasing sequence, it follows that our sequence
    is also increasing. \qedhere

  \end{proof}

  The following lemma is a consequence of \Cref{lem:Y} and
  \Cref{lem:convergence}.
  
  \begin{lemma} \label{lem:pairprod1}

    Let $(r_i)_{i \ge 1}$ be a sequence of positive integers such that
    $r_{i+1}-r_i \ge i$, $Y$ be the idempotent matrix of \Cref{lem:Y} and
    $M_\infty = \lim_{r \to \infty}
    M_r/\lambda_B^r$. Then the following statements hold. 
    \begin{enumerate}[(1)]
      \item For all $i \ge 1$, the sequence of matrices $\{ P_i P_{i+2}
        \cdots P_{i+2k}\}_{k=1}^\infty$ converges to a matrix $Y_i$.
        Furthermore, for all sufficiently large $i$, 
        \[ \norm{Y_i - Y} \le \frac{2}{2^i}\left( \norm{Y}+\norm{Y}^2 \right)\]
      \item The kernel of $Y$ is a subspace of the kernel of $Y_i$ for all
        $i \ge 1$.
      \item
        For all $i \ge 1$, $Y_i(e_1) \neq 0$ with non-negative entries and $Y_i(e_2)$
        and $Y_i(e_3)$ are positive multiples of $Y_i(e_1)$. 
      \item For all $i \ge 1$, $M_{r_i} Y_{i+1}(e_1) \neq 0$ with non-negative entries,
        and $M_{r_i}Y_{i+1}(e_1)$ and $Y_i(e_1)$ are not scalar multiples of
        each other.
      \item Projectively, $[Y_i(e_1)] \to [Y(e_1)]$ and
        $[M_{r_i}Y_{i+1}(e_1)] \to [M_\infty Y(e_1)]$ as $i \to \infty$. 
    \end{enumerate}
  \end{lemma}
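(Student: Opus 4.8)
The plan is to deduce all five statements from four ingredients: the idempotency and block shape of $Y$ recorded in \Cref{lem:Y} (in particular that the first three columns of $Y$ are $u,\,pu,\,qu$ with $p,q>0$ and that $Ye_1=u\neq 0$ is supported in coordinates $2,3,4$); the summable error estimate $\norm{\Delta_j}\le 1/(2\cdot 2^j)$ of \Cref{lem:error bound}; the matrix-product convergence lemma \Cref{lem:convergence}; and the elementary observation that each $M_r$ (hence each $P_i$) is invertible, since $\det M_r=\pm(\det B)^r=\pm1$, and has non-negative entries. For (1) I would apply \Cref{lem:convergence} to the sequence $P_i,P_{i+2},P_{i+4},\dots$, which converges to the idempotent $Y$ by \Cref{lem:Y} with summable errors: $\sum_{l\ge0}\norm{\Delta_{i+2l}}<\infty$, and for large $i$ this tail is at most $\sum_{l\ge0}\tfrac1{2\cdot 2^{i+2l}}=\tfrac2{3\cdot 2^i}\le\tfrac2{2^i}$ by \Cref{lem:error bound}. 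This yields the limit $Y_i=\lim_kP_iP_{i+2}\cdots P_{i+2k}$, the stated bound $\norm{Y_i-Y}\le\tfrac2{2^i}(\norm Y+\norm Y^2)$ for large $i$, a uniform bound $C_i:=\sup_k\norm{P_iP_{i+2}\cdots P_{i+2k}}<\infty$ on the partial products, and $Y_i\to Y$. Passing to the limit in the factorization $P_iP_{i+2}\cdots P_{i+2k}=P_i\,(P_{i+2}\cdots P_{i+2k})$ also gives the recursion $Y_i=P_iY_{i+2}$, which I use repeatedly below.

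For (2): given $v\in\ker Y$, write $P_iP_{i+2}\cdots P_{i+2k}\,v=(P_iP_{i+2}\cdots P_{i+2(k-1)})(Yv-\Delta_{i+2k}v)=-(P_iP_{i+2}\cdots P_{i+2(k-1)})\Delta_{i+2k}v$, of norm at most $C_i\norm{\Delta_{i+2k}}\norm v\to 0$; hence $Y_iv=0$. For (3): non-negativity of $Y_ie_1$ is inherited from the non-negativity of the $P_j$. Since $P_i$ is invertible, $Y_i=P_iY_{i+2}$ gives $Y_ie_1=0\iff Y_{i+2}e_1=0$; as $Y_ie_1\to Ye_1=u\neq 0$, we have $Y_ie_1\neq 0$ for all large $i$, and a downward induction within each parity class (using $Y_i=P_iY_{i+2}$ and invertibility of $P_i$) then yields $Y_ie_1\neq 0$ for every $i\ge 1$. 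Finally $e_1-p^{-1}e_2$ and $e_1-q^{-1}e_3$ lie in $\ker Y$ since the first three columns of $Y$ are $u,pu,qu$; hence they lie in $\ker Y_i$ by (2), i.e.\ $Y_ie_2=pY_ie_1$ and $Y_ie_3=qY_ie_1$, which are positive multiples because $p,q>0$ and $Y_ie_1\neq 0$.

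For (5): $Y_i\to Y$ gives $[Y_ie_1]\to[Ye_1]=[u]$; and since $M_{r_i}/\lambda_B^{r_i}\to M_\infty$ (a convergent, hence bounded, sequence) and $Y_{i+1}e_1\to u$, the products converge, $\lambda_B^{-r_i}M_{r_i}Y_{i+1}e_1\to M_\infty u$, which a short computation from the block shapes of $M_\infty$ and $u$ shows to be a positive multiple of the Perron--Frobenius eigenvector of $B$ placed in coordinates $5,6,7$ — in particular nonzero — so $[M_{r_i}Y_{i+1}e_1]\to[M_\infty u]=[M_\infty Ye_1]$. For (4): $M_{r_i}Y_{i+1}e_1$ is non-negative (product of non-negative matrices) and nonzero (as $M_{r_i}$ is invertible and $Y_{i+1}e_1\neq 0$ by (3)). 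Since $u$ and $M_\infty u$ are supported in the disjoint coordinate sets $\{2,3,4\}$ and $\{5,6,7\}$ they are not proportional, so $M_{r_i}Y_{i+1}e_1$ and $Y_ie_1$ are not proportional for all large $i$; to upgrade this to every $i$, suppose $M_{r_i}Y_{i+1}e_1=\lambda Y_ie_1$, substitute $Y_i=(\kappa_B\lambda_B^{r_{i+1}})^{-1}M_{r_i}M_{r_{i+1}}Y_{i+2}$ and $Y_{i+1}=(\kappa_B\lambda_B^{r_{i+2}})^{-1}M_{r_{i+1}}M_{r_{i+2}}Y_{i+3}$, and left-multiply by $(M_{r_i}M_{r_{i+1}})^{-1}$ to obtain $M_{r_{i+2}}Y_{i+3}e_1=\mu Y_{i+2}e_1$ with $\mu=\lambda\lambda_B^{r_{i+2}-r_{i+1}}\neq 0$; thus proportionality at index $i$ propagates to every $i+2k$, contradicting the large-index conclusion.

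The only genuinely technical input is \Cref{lem:convergence}, the convergence of the infinite product of the $P$'s with explicit error control, and this has been isolated into the appendix; granting it, the rest is bookkeeping. I expect the two points needing a little care to be the bootstrap of the ``large $i$'' facts about $Y_ie_1$ down to all $i$ via $Y_i=P_iY_{i+2}$ and invertibility of $P_i$, and the disjoint-support identification of $M_\infty Ye_1$ that makes the non-proportionality in (4) visible — both short once the block descriptions from \Cref{lem:Y} are in hand.
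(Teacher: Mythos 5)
Your proposal is correct and follows essentially the same route as the paper: apply \Cref{lem:error bound} and \Cref{lem:convergence} to get $Y_i$ with the stated bound, deduce (2) and (3) from kernel containment and the column structure of $Y$, and get (4)--(5) from the limits $[Y_ie_1]\to[Y e_1]$ and $[M_{r_i}Y_{i+1}e_1]\to[M_\infty Ye_1]$ together with the disjoint supports of $Ye_1$ and $M_\infty Ye_1$. If anything, you supply two details the paper leaves implicit — the invertibility of $M_r$ via $\det B=1$, and the recursion $Y_i=P_iY_{i+2}$ used to propagate the ``large $i$'' conclusions of (3) and (4) down to all $i$ — though when invoking \Cref{lem:convergence} you should check the pointwise hypothesis $\norm{\Delta_{i+2l}}\le \epsilon/2^{l+1}$ with $\epsilon=1/2^i$ rather than mere summability of the errors (the estimate of \Cref{lem:error bound} does give this).
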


  \begin{proof}
    
    For (1), it suffices to show convergence for all $i$ greater than some
    $I$. Indeed, if such $I$ exists and $i < I$, then let $i_0 \ge I$ be
    such that $i = i_0$ (mod 2) and observe that
    \[ \{P_i P_{i+2} \cdots P_{i+2k}\}_{k=\frac{i_0-i}{2}}^\infty =
    P_i P_{i+2} \cdots P_{i_0-2} \{P_{i_0}P_{i_0+2} \cdots
    P_{i_0+2k}\}_{k=0}^\infty.\] By assumption $\{P_{i_0}P_{i_0+2} \cdots
    P_{i_0+2k}\}_{k=0}^\infty$ converges. Since matrix multiplication is
    continuous, the sequence $\{P_iP_{i+2} \cdots P_{i+2k}\}_{k=0}^\infty$
    also converges.

    For each $i$, let 
    \[ \Delta_i = P_i - Y.\]
    By \Cref{lem:error bound},
    there exists $I \ge 1$ such that for all $i \ge I$, $\norm{\Delta_i} \le
    \frac{1}{2 \cdot 2^i}$. Also, choose $I$ sufficiently large so that
    $\frac{1}{2^I} \norm{Y} \le 1/2$. Then, by \Cref{lem:convergence},
    for all $i \ge I$, the sequence $\{ P_i P_{i+2} \cdots
    P_{i+2k}\}_{k=0}^\infty$ converges to some matrix $Y_i$, with 
    \begin{equation} \label{eqn:bound}
      \norm{Y_i - Y} \le \frac{2}{2^i} \left( \norm{Y}+\norm{Y}^2 \right).
    \end{equation}

    For (2), it again suffices to show the statement is true for all
    sufficiently large $i$, and the statement holds for all $i \ge I$ by
    \Cref{lem:convergence}. 
    
    For (3), first note that since all the matrices involved are
    non-negative, the resulting vectors are all also non-negative. So we
    only need to show that they are not the zero vector. It suffices to
    check that $Y_i(e_1) \neq 0$ for all sufficiently large $i$, since each
    $P_i$ is non-negative and has full rank. For large $i$, the statement
    follows because $Y(e_1)$ is not equal to 0 and $\norm{Y_i(e_1) -
    Y(e_1)} \le \norm{Y_i-Y}$ can be made arbitrarily small. For the second
    statement, we know that $Y(e_2)$ and $Y(e_3)$ are positive multiples of
    $Y(e_1)$, so there are $s,t > 0$ such that $se_2 - e_1$ and $te_3 -
    e_1$ are in the kernel of $Y$. Then $Y_i(se_2-e_1) = Y_i(te_3-e_1)=0$
    for all $i$ by (2). 
    
    For (4), $M_{r_i}Y_{i+1}(e_1) \neq 0$ with non-negative entries since
    $Y_{i+1}(e_1)$ is so by (3). To see that $M_{r_i}Y_{i+1}(e_1)$ and
    $Y_i(e_1)$ are projectively distinct, it is enough to do this for all
    sufficiently large $i$. Let $M_\infty = \lim_{r \to \infty}
    M_r/\lambda_B^r$. By \Cref{lem:Y} and \Cref{lem:MYMZ}, $Y(e_1)$ and
    $M_\infty Y (e_1)$ are orthogonal. Since $r_i \to \infty$, we can make
    $\frac{M_{r_i}}{\lambda_B^{r_i}}Y_{i+1}(e_1)$ arbitrarily close to
    $M_\infty Y(e_1)$, and $Y_i(e_1)$ close to $Y(e_1)$. This means
    $M_{r_i} Y_{i+1}(e_1)$ and $Y_i(e_1)$ are near orthogonal, so they
    can't be scalar multiples of each other. 
    
    Statement (5) is clear. \qedhere
  \end{proof}
  
  \begin{proof}[Proof of \Cref{prop:2currents}]
    
    By \Cref{lem:Y} and \Cref{lem:MYMZ},
    $[v_B^{234}]=[Y(e_1)]$ and 
    $[v_B^{567}]=[M_\infty Y(e_1)]$.
    Using notation from \Cref{lem:pairprod1}, set \[p_i = [Y_i(e_1)] \qquad
    \text{and} \qquad q_i = [M_{r_i}Y_{i+1}(e_1)].\] By
    \Cref{lem:pairprod1}
    (3) - (5), 
    \begin{itemize}
      \item $p_i$ and $q_i$ are well defined and distinct.
      \item $p_i = [Y_i(e_k)]$, and $q_i = [M_{r_i}Y_{i+1}(e_k)]$, for $k=1,2,3$.
      \item $p_i \to [v_B^{234}]$ and $q_i \to [v_B^{567}]$.
      \item $[M_{r_i}(p_{i+1})] = q_i$ and $[M_{r_i}(q_{i+1})] = p_i$. 
    \end{itemize}

    Our goal is to show $S_i$ is the $1$-simplex spanned by $p_i$ and
    $q_i$. To do this, we consider $S_{ij}$, which is the convex hull of
    the $M_{ij}$-images of the vectors $e_k$, $k=1,\cdots,7$. That is, we
    have to show that $[M_{ij}(e_k)]$ is close to either $p_i$ or $q_i$ for
    each $k$. We first observe that for all $r,s > 0$: 
    \begin{itemize}
      \item $M_r(e_4)=e_1$, $M_r(e_5)=e_2$, $M_r(e_6)=e_3$,
        $M_r(e_7)=e_4$,
      \item $M_rM_s(e_7)=e_1$.
    \end{itemize}
    
    We may assume that $j-1 = i+2m$, so $M_{ij}$ breaks up into pairs,
    i.e.\ for all $k$, 
    \[ [M_{ij}(e_k)] = [P_i \cdots P_{j-1} (e_k)].\]
    
    Let $\epsilon > 0$ be arbitrary. Choose
    $\delta > 0$ such that for any vector $u \in \R_+^7$ and any $v \in
    \left\{ Y_i(e_k), \frac{M_{r_i}}{\lambda_B^{r_i}} Y_{i+1}(e_k) :
    k=1,2,3\right\}$, if $\norm{u-v} \le \delta$, then $d([u],[v]) \le
    \epsilon$. Now by \Cref{lem:pairprod1}, we can choose $J$ sufficiently
    large so that whenever $i+2m \ge J$, then
    \begin{itemize}
      \item $\norm{ P_i \cdots P_{i+2m} -Y_i} \le \delta$
      \item $\norm{ P_{i+1} \cdots P_{i+2m+1} -Y_{i+1}} \le
        \frac{\delta}{\norm{M_{r_i}/\lambda_B^{r_i}}}.$
    \end{itemize}
    Now we may assume that $j-3 \ge J$. Then, 
    \begin{itemize}
      \item For $k=1,2,3$, we have \[ \norm{P_i \cdots P_{j-1}(e_k) -
        Y_i(e_k)} \le 
        \delta \quad \Longrightarrow \quad d \Big( [ M_{ij} (e_k) ],
        p_i \Big) \le
        \epsilon. \]
     \item For $k=7$, we have $M_{ij}(e_7) = M_{i,j-2}(e_1)$, so
       $[M_{ij}(e_7)]=[P_i \cdots P_{j-3}(e_1)]$ is $\epsilon$-close to
        $[p_i]$ by the same reasoning as the previous bullet point.
      \item For $k=4,5,6$, $M_{ij}(e_k) = M_{i,j-1}(e_{k-3})$. In this
        case, we consider $\frac{M_{r_i}}{\lambda^{r_i}} P_{i+1} \cdots
        P_{j-2}(e_{k-3})$ and approximate it by
        $\frac{M_{r_i}}{\lambda^{r_i}} Y_{i+1}(e_{k-3})$, as follows: 
        \begin{align*}
          & \norm{\frac{M_{r_i}}{\lambda^{r_i}} P_{i+1} \cdots
          P_{j-2}(e_{k-3}) -
          \frac{M_{r_i}}{\lambda^{r_i}} Y_{i+1}(e_{k-3})} \\ 
          & \le \norm{\frac{M_{r_i}}{\lambda^{r_i}}} \norm{P_{i+1} \cdots
          P_{j-2} - Y_{i+1}}\\
          & \le \delta. 
        \end{align*}
        Thus, for $k=4,5,6$, $d \Big( [M_{ij}(e_k)], q_i \Big) \le
        \epsilon$.

    \end{itemize}
    We have shown that for any $\epsilon$, the vertices of the simplex
    $S_{i,j}$ come $\epsilon$-close to $p_i$ and $q_i$ for all sufficiently
    large $j$. Since $S_{i,j+1} \subset S_{i,j}$ and $S_i = \bigcap_{j>i}
    S_{i,j}$, it follows that $S_i$ must be the $1$-simplex spanned by
    $p_i$ and $q_i$. This proves the Proposition. \qedhere
  \end{proof}
  
  Recall the unfolding sequence $(\tau_i)_{i \ge 0}$ where $M_{r_i}$ is the
  transition matrix of the train track map $\phi_{r_i}: \tau_i \to
  \tau_{i-1}$. Let $\Lambda$ be the legal lamination of $(\tau_i)_{i \ge 0}$.  
  
  \begin{corollary}\label{cor:currents 1-simplex}
    If $(r_i)_{i \ge 1}$ is a positive sequence with  $r_{i+1}-r_i \ge i$,
    then $\P\C(\Lambda)$ is a 1-simplex. 
  \end{corollary}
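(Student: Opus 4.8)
The plan is to deduce the corollary from \Cref{thm:NPR currents} together with \Cref{prop:2currents}. \Cref{thm:NPR currents} supplies a natural linear isomorphism $\C(\Lambda)\cong\C((\tau_i)_i)$ \emph{provided} the unfolding sequence $(\tau_i)_{i\ge0}$ has no invariant sequence of subgraphs, so the argument splits into two parts: (i) identify $\P\C((\tau_i)_i)$ with the set $S_i$ of \Cref{prop:2currents}, which is a $1$-simplex; and (ii) verify the no-invariant-subgraph hypothesis.

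For (i), recall that a current on the unfolding sequence is a nonnegative sequence $(\mu_i)_{i\ge0}$, $\mu_i\in\R^7$, satisfying the compatibility relation coming from the train track maps $\phi_{r_i}\from\tau_i\to\tau_{i-1}$, namely $\mu_{i-1}=M_{r_i}\mu_i$ for all $i\ge1$ (the transition matrix pushes edge weights forward, in the direction of the map). Since $\det B=1$, each $M_{r_i}$ is invertible, so a current is determined by $\mu_0$ via $\mu_i=(M_{r_1}\cdots M_{r_i})^{-1}\mu_0$, and the sequence so produced is nonnegative throughout if and only if $\mu_0$ lies in the convex cone $\bigcap_{j\ge1}M_{r_1}\cdots M_{r_j}(\R_{\geq0}^7)$. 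Passing to projective space, and noting that the $j=1$ term $M_{r_1}(\R_{\geq0}^7)$ contains every $S_{1,j}$ with $j>1$ and so is redundant in the intersection, this identifies $\P\C((\tau_i)_i)$ with the nested intersection $S_1=\bigcap_{j>1}S_{1,j}$; the identical argument applied to the tail beginning at $\tau_i$ gives $\P\C((\tau_i)_i)\cong S_i$ for every $i$. By \Cref{prop:2currents}, each $S_i$ is a $1$-simplex.

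For (ii), each $\tau_i$ is a $7$-petaled rose, so a nondegenerate proper subgraph $H_i\subset\tau_i$ is a sub-rose supported on a nonempty proper subset $E_i$ of the seven petals, and $f_i$ restricts to a morphism $H_i\to H_{i-1}$ precisely when $\phi_{r_i}$ carries each petal of $E_i$ to a reduced word on the petals of $E_{i-1}$. From the description of $\phi_r=\rho\phi^r$ in \Cref{lem:phi train track} one reads off that $d\mapsto a$, $e\mapsto b$, $f\mapsto c$, $g\mapsto d$, while each of $\phi_r(a),\phi_r(b),\phi_r(c)$ is a reduced word involving all of $e,f,g$ whenever $B^r>0$ (e.g.\ for $r\ge4$, since already $B^4$ is positive). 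Because $r_{i+1}-r_i\ge i$ forces $(r_i)$ to be strictly increasing and eventually at least $4$, for all large $i$ the presence of any single petal in $E_i$ propagates backwards through the relations above and forces $E_{i-k}$ to consist of all seven petals for some $k\le 6$, contradicting properness. Hence $(\tau_i)_i$ has no invariant sequence of subgraphs.

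Combining (i), (ii) and \Cref{thm:NPR currents}, we get $\P\C(\Lambda)\cong\P\C((\tau_i)_i)=S_1$, which is a $1$-simplex. The part demanding the most care is (i): one must track the direction in which the transition matrices act, since currents on the sequence transform by the matrices $M_{r_i}$ themselves, whereas length measures transform by the transposes; consequently $\P\C((\tau_i)_i)$ is the intersection of the \emph{images} of the compositions $M_{ij}$ --- precisely the object $S_i$ analyzed in \Cref{prop:2currents} --- and not of their transposes.
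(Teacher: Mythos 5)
Your argument is correct and follows the paper's route: reduce via \Cref{thm:NPR currents} to showing $\P\C((\tau_i)_i)$ is a 1-simplex, and identify the latter with the set $S_i$ of \Cref{prop:2currents} through the projection $[(\mu_i)_i]\mapsto[\mu_i]$. You additionally verify the no-invariant-subgraph hypothesis of \Cref{thm:NPR currents}, which the paper's proof leaves implicit, and you use invertibility of the $M_{r_i}$ to make the bijection with $S_i$ explicit; both points are sound (the only quibble is that propagating a single petal backwards to the full rose can take up to $7$ steps rather than $6$, e.g.\ starting from $g$, which changes nothing).
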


  \begin{proof}
    
    In light of \Cref{thm:NPR currents}, it is enough to show
    $\P\C((\tau_i)_i)$ is a 1-simplex. For each $i \ge 0$, we have a
    well-defined projection \[ p_i\from \P\C((\tau_i)_i) \to \P\R_+^7
    \quad\text{given by}\quad p_i([(\mu_i)_i]) = [\mu_i].\] The image of
    the projection is $S_{i+1}$, which is always a $1$-simplex by
    \Cref{prop:2currents}. Therefore, $\P\C((\tau_i)_i)$ is a $1$-simplex.
    \qedhere

  \end{proof}
  
\section{Non-uniquely ergometric tree}

  \label{sec:folding}

  The goal of this section is to show that if a sequence $(r_i)_{i \ge 1}$
  grows sufficiently fast, then the set of projectivized length measures
  $\P\D((\tau_i')_i)$ on the folding sequence $(\tau_i')_i)$ is a
  1-simplex. By \Cref{prop:IsoLength}, if $(\tau_i')_i$ converges to an
  arational tree $T$, then $\P\D(T)$ is also a 1-simplex in
  $\partial\CV[7]$. 

  Recall that $N_r$ is a $7\times 7$ matrix of the block form
  \[
    \begin{pmatrix}
      0&C^r\\
      I&0
    \end{pmatrix}
    \]
  where $I$ is the $4\times 4$ identity matrix, and $C$ is the transition
  matrix of $\vartheta$. The transpose of $N_r$ has the same shape as $M_r$.
  Therefore, the same theory from \Cref{sec:unfolding} holds true.
  For brevity, we record only the essential statements that will be used
  later and omit all proofs from this section. 

  Let $\lambda_C$ be the Perron-Frobenius eigenvalue of $C$.   Let
  $\kappa_C$ be the constants of \Cref{lem:Z}. Given a sequence
  $(r_i)_i$, define for each $i \ge 1$ 
  \[
    Q_i = \frac{1}{\kappa_C \lambda_C^{r_{i+1}}} N_{r_{i+1}} N_{r_i}.
  \]

  \begin{lemma} \label{lem:pairprod2}
 
    Given a sequence $(r_i)_{i \ge 1}$ of positive integers such that
    $r_{i+1}-r_i \ge i$. Then for all $i \ge 1$, the sequence of matrices
    $\{ Q_{i+2k}\cdots Q_{i+2} Q_i\}_{k=0}^\infty$ converges to a matrix
    $Z_i$. Furthermore, $\lim_{i \to \infty} Z_i=Z$, where $Z$ is the
    idempotent matrix of \Cref{lem:Z}.

  \end{lemma}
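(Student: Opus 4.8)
The plan is to reduce \Cref{lem:pairprod2} to the already-established \Cref{lem:pairprod1} by transposing. The matrices $N_r$ are ``$M$-type'' after transposition --- $N_r^T$ has the same block shape as $M_r$ with $B$ replaced by $C^T$ --- and, correspondingly, the products in \Cref{lem:pairprod2}, namely $Q_{i+2k}\cdots Q_{i+2}Q_i$, accumulate on the side opposite to the products $P_iP_{i+2}\cdots P_{i+2k}$ of \Cref{lem:pairprod1}. So I would first take transposes in \Cref{lem:Z} to get $\tfrac1{\kappa_C\lambda_C^s}N_r^TN_s^T\to Z^T$ as $r,\,s-r\to\infty$, with $Z^T$ again idempotent. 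Since $r_{i+1}-r_i\ge i$ forces both $r_i\to\infty$ and $r_{i+1}-r_i\to\infty$, this shows $\widetilde P_i:=Q_i^T=\tfrac1{\kappa_C\lambda_C^{r_{i+1}}}N_{r_i}^TN_{r_{i+1}}^T$ converges to $Z^T$.

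Next I would record the analogue of \Cref{lem:error bound} with $C$ in place of $B$. The proof of \Cref{lem:Z} exhibits $N_{r_{i+1}}N_{r_i}$ with one block equal to $C^{r_{i+1}}$ and one equal to $C^{r_i}$, so the identical estimate yields
\[
  \norm{Z-Q_i}\;\le\;\max\!\left(\frac{\mu_C^{r_{i+1}}}{\lambda_C^{r_{i+1}}},\ \frac{\lambda_C^{r_i}}{\lambda_C^{r_{i+1}}}\right)\;\le\;\frac{\lambda_C^{r_i}}{\lambda_C^{r_{i+1}}}\;\le\;\frac{1}{2\cdot 2^i}
\]
for all sufficiently large $i$ (here $\mu_C<1<\lambda_C$ are the subdominant and dominant eigenvalue moduli of $C$, and the last inequality is the same computation as in \Cref{lem:error bound}, using that $(r_{i+1}-r_i)/(i+1)$ is eventually increasing). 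As the spectral norm is transpose invariant, the same bound holds for $\norm{Z^T-\widetilde P_i}$.

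With these two ingredients I would then apply \Cref{lem:convergence} to the sequence $(\widetilde P_i)$ and the idempotent $Z^T$ exactly as in the proof of \Cref{lem:pairprod1}(1): choose $I$ with $\norm{Z^T-\widetilde P_i}\le\tfrac1{2\cdot 2^i}$ and $\tfrac1{2^I}\norm{Z^T}\le\tfrac12$ for $i\ge I$; then the products $\widetilde P_i\widetilde P_{i+2}\cdots\widetilde P_{i+2k}$ converge as $k\to\infty$ to a matrix $W_i$ with $\norm{W_i-Z^T}\le\tfrac2{2^i}\big(\norm{Z^T}+\norm{Z^T}^2\big)$, and the finitely many values $i<I$ are reduced to the case $i\ge I$ by factoring off the leading terms and using continuity of multiplication, just as before. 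Transposing back, $Q_{i+2k}\cdots Q_{i+2}Q_i=(\widetilde P_i\widetilde P_{i+2}\cdots\widetilde P_{i+2k})^T$ converges to $Z_i:=W_i^T$, and $\norm{Z_i-Z}=\norm{W_i-Z^T}\to 0$, so $Z_i\to Z$.

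The step I expect to be the main obstacle is purely the bookkeeping of multiplication order: one must verify carefully that $Q_i^T$ is the correct product of $N$-type transposes and that transposition sends the product $Q_{i+2k}\cdots Q_{i+2}Q_i$ appearing in the statement precisely onto the product $\widetilde P_i\widetilde P_{i+2}\cdots\widetilde P_{i+2k}$ handled by \Cref{lem:convergence} --- this is the point where the asymmetry between the folding and unfolding settings enters. Once that alignment is pinned down, every estimate and every appeal to \Cref{lem:Z}, \Cref{lem:convergence}, and the eigenvalue bounds transfers verbatim, so I do not anticipate any genuine difficulty. (The positivity and non-degeneracy assertions that accompany \Cref{lem:pairprod1}, statements (2)--(5), would follow by identical reasoning, but are not needed for \Cref{lem:pairprod2} as stated.)
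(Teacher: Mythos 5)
Your proposal is correct and is exactly the argument the paper intends: the paper explicitly omits the proof of \Cref{lem:pairprod2}, remarking only that $N_r^T$ has the same block shape as $M_r$ so "the same theory from \Cref{sec:unfolding} holds true," and your write-up simply carries out that transposition reduction (including the key bookkeeping point that $(Q_{i+2k}\cdots Q_{i+2}Q_i)^T = Q_i^TQ_{i+2}^T\cdots Q_{i+2k}^T$, which puts the product in the order required by \Cref{lem:convergence}). No gaps.
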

  
  \begin{cor}\label{cor:trees 1-simplex}

    If $(r_i)_{i \ge 1}$ is a positive sequence with  $r_{i+1}-r_i \ge i$.
    Then $\P\D((\tau_i')_i)$, and hence $\P\D(T)$, is a 1-simplex.

  \end{cor}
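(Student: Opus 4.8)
The plan is to transpose the entire argument of Section~\ref{sec:unfolding} verbatim, exploiting the observation already recorded before Lemma~\ref{lem:pairprod2} that $N_r^T$ has the same block shape as $M_r$ with $C$ in place of $B$. First I would recall that a length measure on the folding sequence $(\tau_i')_i$ is a sequence $(\lambda_i)_i$ with $\lambda_i = N_{r_{i+1}}^T \lambda_{i+1}$, so that the forward transfer operators are exactly the transposes $N_{r_i}^T$. For each $i \ge 0$ we therefore get a projection $p_i \from \P\D((\tau_i')_i) \to \P\R_+^7$, $p_i([(\lambda_j)_j]) = [\lambda_i]$, whose image is the nested intersection $\bigcap_{j > i}$ of the images of the composed transfer operators $N_{r_{i+1}}^T \cdots N_{r_j}^T$. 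The goal is to show this intersection is a $1$-simplex.

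Next I would run the analogue of Proposition~\ref{prop:2currents}: the images $S_{ij}$ of the composed operators are convex hulls of the images of the standard basis vectors, and using the pairing-up $Q_i = \frac{1}{\kappa_C \lambda_C^{r_{i+1}}} N_{r_{i+1}}N_{r_i}$ from Lemma~\ref{lem:pairprod2} (here applied to the transposed operators, i.e. the relevant matrices are $Q_i^T$) one shows that the basis vectors fall, after enough steps, into two clusters converging projectively to $[Z(e_k)]$-type and $[N_\infty^T Z(e_k)]$-type limits, where $Z$ is the idempotent of Lemma~\ref{lem:Z} and $N_\infty = \lim_r N_r/\lambda_C^r$. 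Lemma~\ref{lem:pairprod2} gives the convergence $Z_i \to Z$ of the even products, playing the role Lemma~\ref{lem:pairprod1} played in Section~\ref{sec:unfolding}, and Lemma~\ref{lem:MYMZ} gives that $Z(e_1)$ and $N_\infty^T Z(e_1)$ are projectively distinct (indeed near-orthogonal). Exactly as in the proof of Proposition~\ref{prop:2currents}, distinguishing the five cases $k \in \{1,2,3\}$, $k \in \{4,5,6\}$, and $k = 7$ for $N_r^T(e_k)$, one concludes each $S_i$ is the $1$-simplex spanned by two points $p_i, q_i \in \P\R_{>0}^7$ with $\{p_i, q_i\} \to \{[Z(e_1)], [N_\infty^T Z(e_1)]\}$.

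Finally, since $p_i$ is injective on $\P\D((\tau_i')_i)$ with image the $1$-simplex $S_i$ (injectivity because the transfer operators have full rank, so a length measure is determined by any single $\lambda_i$), it follows that $\P\D((\tau_i')_i)$ is a $1$-simplex. For the last clause, Proposition~\ref{prop:IsoLength} gives a linear isomorphism $\D((\tau_i')_i) \cong \D(T)$ once $(\tau_i')_i$ converges to an arational tree $T$, which is exactly the content of Corollary~\ref{cor:arational} under the standing hypotheses; hence $\P\D(T)$ is also a $1$-simplex, and in particular $T$ is non-uniquely ergometric. The only genuine work is bookkeeping the transpose correctly so that the block in $N_r^T$ is $(C^T)^r$ rather than $C^r$ — but $C^T$ is again primitive with the same Perron--Frobenius eigenvalue $\lambda_C$, so every Perron--Frobenius estimate goes through unchanged; I expect this transposition housekeeping, together with verifying that the two limit vectors land in the positive cone and are distinct, to be the only place requiring care.
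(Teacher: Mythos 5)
Your proposal is correct and is essentially the paper's own (largely omitted) argument: the paper itself observes that $N_r^T$ has the same block shape as $M_r$, declares that ``the same theory from \Cref{sec:unfolding} holds true,'' records only \Cref{lem:pairprod2}, and suppresses all proofs, exactly the transposed rerun of \Cref{prop:2currents} plus \Cref{prop:IsoLength} and \Cref{cor:arational} that you describe. The only quibble is notational: since $Ze_1=0$, the two limit points should be written $[Z^T(e_1)]$ and $[N_\infty^T Z^T(e_1)]$ (i.e.\ rows of $Z$ and of $ZN_\infty$ from \Cref{lem:MYMZ}), which is precisely the transpose bookkeeping you already flag as the point requiring care.
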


\section{Non-uniquely ergodic tree}

  \label{sec:combination}

  In this section we relate the legal lamination $\Lambda$ associated to
  the unfolding sequence $(\tau_i)_i$ defined in \Cref{sec:unfolding} and
  the limiting tree $T$ of the folding sequence $(\tau_i')_i$ defined in
  \Cref{sec:arational tree}, to show that $T$ is not uniquely ergodic.

  Recall the automorphism $\Phi_i = \phi_{r_1} \circ \cdots \phi_{r_i}$,
  with $\Phi_0 = \id$. We also use $\Phi_i$ to denote the induced graph map
  from $\tau_i$ to $\tau_0$. If each $\tau_i$ and $\tau_i'$ as a marked
  graph is the rose labeled by $\{a_i, b_i, c_i, d_i, e_i, f_i, g_i\}$,
  then  $x_i$ is represented by $\Phi_i(x)$ for $x \in \{a,b,c,d,e,f,g\}$
  as a word in $\free[7] = \langle a,b,c,d,e,f,g\rangle = \pi_1(\tau_0) =
  \pi_1(\tau_0')$. We denote $x_0$ as above simply by $x$. 

  \begin{lemma}\label{lem:volume0}

    If $(r_i)_{i \ge 1}$ is positive, then for any length measure
    $(\lambda_i)_i \in \D((\tau'_i)_i)$, the $\lambda_i$-volume of
    $\tau_i'$ goes to $0$ as $i \to \infty$. 

  \end{lemma}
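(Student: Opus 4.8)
The plan is to prove the stronger, self\-contained statement that the total edge length $w_i:=\sum_{e\in E\tau_i'}\lambda_i(e)=\mathbf 1^T\lambda_i$ tends to $0$, working directly with the recursion that defines a length measure on the folding sequence $(\tau_i')_i$. By the definition in \Cref{sec:sequences}, $\lambda_i=N_{r_{i+1}}^T\lambda_{i+1}$, where $N_{r_{i+1}}$ is the transition matrix of $\psi_{r_{i+1}}\colon\tau_i'\to\tau_{i+1}'$ given by \Cref{lem:psi train track}. The first step is monotonicity of $w_i$: from the block form $N_r=\left(\begin{smallmatrix}0&C^r\\ I&0\end{smallmatrix}\right)$, the vector of row sums $N_r\mathbf 1$ has all entries $\ge 1$ (the four bottom rows have row sum $1$, the three top rows have row sums equal to the entries of $C^r\mathbf 1\ge\mathbf 1$), so $w_i=(N_{r_{i+1}}\mathbf 1)^T\lambda_{i+1}\ge\mathbf 1^T\lambda_{i+1}=w_{i+1}$ because $\lambda_{i+1}>0$. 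Thus $w_i\searrow w_\infty\ge 0$, and the lemma reduces to showing $w_\infty=0$.

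Next I would unpack the relation $\lambda_i=N_{r_{i+1}}^T\lambda_{i+1}$ coordinatewise. Writing $\alpha_i,\beta_i,\gamma_i$ for the lengths of the petals $a,b,c$ of $\tau_i'$, the block structure of $N_{r_{i+1}}^T$ produces the ``shift'' relations $(\lambda_i)_d=\alpha_{i+1}$, $(\lambda_i)_e=\beta_{i+1}$, $(\lambda_i)_f=\gamma_{i+1}$, $(\lambda_i)_g=\alpha_{i+2}$ — so that $\lambda_i=(\alpha_i,\beta_i,\gamma_i,\alpha_{i+1},\beta_{i+1},\gamma_{i+1},\alpha_{i+2})$ — together with the single ``expansion'' relation
\[
 (\alpha_i,\beta_i,\gamma_i)^T=C^{r_{i+1}}\,(\beta_{i+2},\gamma_{i+2},\alpha_{i+3})^T .
\]
Combining these yields the telescoping identity $w_i-w_{i+1}=(C^{r_{i+1}}\mathbf 1-\mathbf 1)^T(\alpha_{i+1},\beta_{i+1},\gamma_{i+1})^T$. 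Since $C\mathbf 1=(1,2,1)^T$ and applying $C$ to any vector $\ge\mathbf 1$ keeps the middle coordinate $\ge 2$, one gets $(C^r\mathbf 1)_2\ge 2$ for every $r\ge 1$, so the coefficient of $\beta_{i+1}$ is always $\ge 1$ and $w_i-w_{i+1}\ge\beta_{i+1}$. Summing over $i$ gives $\sum_j\beta_j\le w_0<\infty$, hence $\beta_j\to 0$; and inspecting the other two columns of $C^r\mathbf 1-\mathbf 1$ shows in the same way that $w_i-w_{i+1}\ge\alpha_{i+1}$ whenever $r_{i+1}\ge 2$ and $w_i-w_{i+1}\ge\gamma_{i+1}$ whenever $r_{i+1}\ge 3$.

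The final step is to bootstrap $\sum_j\beta_j<\infty$ (and the partial sums just obtained) to $\alpha_j\to 0$ and $\gamma_j\to 0$; then $w_i=(\alpha_i+\beta_i+\gamma_i)+(\alpha_{i+1}+\beta_{i+1}+\gamma_{i+1})+\alpha_{i+2}\to 0$, finishing the proof. The mechanism is that, $C$ being primitive, the expansion relation and its index\-translates force every coordinate of $\lambda_i$ to be bounded by a finite nonnegative combination of $\beta$\-coordinates a bounded number of levels ahead (e.g.\ for $r_i\ge 3$ the middle row of $C^{r_i}$ is positive, so $\beta_{i-1}\ge\gamma_{i+1}+\alpha_{i+2}$). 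I expect this bootstrap to be the main obstacle: the hypothesis allows the $r_i$ to be small, and for small $r$ the matrix $C^{r}$ is not positive, so the length measure can momentarily ``park'' mass in the $\alpha$\- or $\gamma$\-coordinate. This is handled by a finite case analysis on the small values of $r_i$ (those below the least exponent making $C^{r}$ positive), substituting the explicit matrices $C,C^2,C^3,C^4$ into the recursion and its translates — e.g.\ $r_{i+1}=2$ gives $\gamma_i=\gamma_{i+2}$ and $\beta_i=\beta_{i+2}+\gamma_{i+2}$, whence $\gamma_i\le\beta_i$, and similarly one pins down the stray $\alpha$\-terms — so that every $\alpha_j,\gamma_j$ is dominated by a $\beta$\-coordinate with index in a window of bounded size around $j$; summability of $(\beta_j)$ then forces $\alpha_j,\gamma_j\to 0$.

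As a consistency check one can also see the conclusion from \Cref{prop:IsoLength}: the sequence $(\tilde\tau_i',\tilde\lambda_i)$ is a folding path converging to the boundary tree $T\in\partial\cv[7]$, which (being non\-simplicial) cannot be an accumulation point of trees of bounded covolume; but the matrix computation above is preferable here since it is elementary and does not require identifying $T$.
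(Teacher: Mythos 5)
Your overall strategy---track the total length $w_i=\mathbf 1^T\lambda_i$ through the transition matrices and telescope---is sound, and both the monotonicity step and the identity $w_i-w_{i+1}=(C^{r_{i+1}}\mathbf 1-\mathbf 1)^T(\alpha_{i+1},\beta_{i+1},\gamma_{i+1})^T$ are in fact correct. But the coordinatewise unpacking on which the rest of the argument rests is wrong: you have effectively applied $N_{r_{i+1}}$ instead of its transpose. From $\lambda_i=N_{r_{i+1}}^T\lambda_{i+1}$ and the block form of $N_r$, the correct relations are $(\lambda_i)_{a,b,c,d}=(\lambda_{i+1})_{d,e,f,g}$ and $(\lambda_i)_{e,f,g}=(C^{r_{i+1}})^T(\lambda_{i+1})_{a,b,c}$; geometrically, $\psi_r$ sends $a\mapsto d$, $b\mapsto e$, $c\mapsto f$, $d\mapsto g$, and $e,f,g$ to $\vartheta^r$-words in $a,b,c$, and an edge of $\tau_i'$ has the length of its image path in $\tau_{i+1}'$. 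So $\lambda_i$ is not $(\alpha_i,\beta_i,\gamma_i,\alpha_{i+1},\beta_{i+1},\gamma_{i+1},\alpha_{i+2})$, and your ``expansion relation'' is false. This kills the final step as written: in reality the $e,f,g$-coordinates of $\lambda_i$ are combinations of $(\alpha_{i+1},\beta_{i+1},\gamma_{i+1})$ with coefficients given by entries of $C^{r_{i+1}}$, which are unbounded in $i$, so $\alpha_j,\beta_j,\gamma_j\to 0$ (even with summability) does not by itself bound $w_i$. Moreover the bootstrap you flag as the main obstacle is left unproved, is set up on the incorrect identities (e.g.\ the $r_{i+1}=2$ computation), and is moot anyway: the folding sequence is only defined, via \Cref{lem:psi train track}, for $r_i\ge3$ with $r_i\equiv 0 \bmod 3$, so every $C^{r_i}$ already has all row sums at least $2$.

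For comparison, the paper's proof is a two-line geometric version of this same computation: for the triple composition $\psi_{r_i}\psi_{r_{i-1}}\psi_{r_{i-2}}\from\tau_{i-3}'\to\tau_i'$ every point of $\tau_i'$ has at least two preimages (equivalently, all row sums of $N_{r_i}N_{r_{i-1}}N_{r_{i-2}}$ are $\ge 2$, which uses that row sums of $C^r$ are $\ge 2$ for $r\ge 3$), and since these maps are isometries on edges for a compatible length measure, $\vol(\tau_i')\le\tfrac12\vol(\tau_{i-3}')$, so $w_i\to 0$ geometrically. Your telescoping can be repaired to recover exactly this: since each row sum of $C^{r_{i+1}}$ is $\ge 2$, one has $w_i-w_{i+1}\ge\tfrac12(\text{$efg$-length of }\tau_i')$, while the correct shift relations give that the $abcd$-length of $\tau_i'$ equals the $defg$-length of $\tau_{i+1}'$, which is at most the sum of the $efg$-lengths of $\tau_{i+1}'$ and $\tau_{i+2}'$; combining yields $w_i\le 2(w_i-w_{i+3})$, i.e.\ $w_{i+3}\le\tfrac12 w_i$. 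As submitted, however, the proposal contains a genuine error in the key relations plus an unfinished crucial step, so it does not yet prove the lemma.
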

  
  \begin{proof}
    
    The composition
    $\psi_{r_i}\psi_{r_{i-1}}\psi_{r_{i-2}}:\tau_{i-3}'\to\tau_i'$ has
    the property that the preimage of every point of $\tau_i'$ consists of
    at least two (in fact, many more) points of $\tau_{i-3}'$ and so
    the $\lambda_i$-volume of $\tau_i'$ is at most half of the
    $\lambda_{i-3}$-volume of $\tau_{i-3}'$. \qedhere
  \end{proof}

  \begin{lemma}\label{lem:leaf of Lambda} 

    Suppose $(r_i)_{i \ge 1}$ is positive. Let $\Lambda$ be the legal
    lamination of the unfolding sequence $(\tau_i)_i$. Then every leaf in
    $\Lambda$ is obtained as a limit of a sequence $\{\Phi_i(w)\}_i$, where
    $w$ is a legal word in $\tau_0$ of length at most two in $\{a, b, c, d, e, f, g\}$
    and their inverses. Moreover, $w$ can be closed up to a legal loop
    which is a cyclic word of length $\leq 3$.

  \end{lemma}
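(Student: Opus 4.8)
The plan is to unwind the definition of the legal lamination $\Lambda = \bigcap_i F_i(\Omega^L_\infty(\tau_i))$ and push the combinatorial structure of the train track maps $\phi_{r_i}$ down to $\tau_0$. First I would fix a leaf $\ell \in \Lambda$. For each $i$, $\ell$ lifts to a bi-infinite legal path in $\tau_i$, which is just a bi-infinite reduced word in the basis of $\tau_i$ whose every length-two subword crosses a legal turn of the train track structure from \Cref{lem:phi train track}. The key point is that the legal turns of $\tau_i$ at the (single) vertex of the rose are controlled: the gates are $\{a,b,c\}, \{d,e,f\}$ together with eight single half-edge gates, so a legal turn is any pair of directions not both lying in $\{a,b,c\}$ and not both in $\{d,e,f\}$ (after translating through the marking $\Phi_i$). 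Consequently a legal segment of length two in $\tau_i$ is the image under $\Phi_i$ of one of finitely many legal words $w$ of length $\le 2$ in $\{a,b,c,d,e,f,g\}^{\pm}$ in $\tau_0$'s train track structure.

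Next, I would extract the leaf as a limit. Since $\ell \in F_i(\Omega^L_\infty(\tau_i))$ for every $i$ and $F_i = \Phi_i$ as a marked graph map, there is for each $i$ a bi-infinite legal path $\sigma_i$ in $\tau_i$ with $\Phi_i(\sigma_i) = \ell$ (as a path in $\tau_0$, up to reduction, which changes nothing since $\Phi_i$ maps legal paths to legal paths and legal paths are already reduced). Look at the length-two subword of $\sigma_i$ straddling the basepoint; there are only finitely many such possibilities in $\tau_i$, each being the $\Phi_i$-image of a bounded legal word $w^{(i)}$ of length $\le 2$ in $\tau_0$. As $i \to \infty$, the words $\Phi_i(w^{(i)})$ approximate longer and longer subwords of $\ell$ — indeed $\Phi_i$ maps a legal edge of $\tau_i$ to a legal path in $\tau_0$ whose length tends to infinity, because the transition matrices $M_{r_i}$ have growing entries (as $r_i \to \infty$, or even just from primitivity of $B$). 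Passing to a subsequence on which $w^{(i)}$ is a constant legal word $w$, the sequence $\Phi_i(w)$ converges to $\ell$ in $\partial^2\free[7]$, which gives the first assertion.

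For the "moreover" clause, I would argue that any legal word $w$ of length $\le 2$ in $\tau_0$ that actually arises as a straddling subword of some bi-infinite legal path can be closed up: since $\phi_{r_i}$ is a train track map with two gates of rank three at the vertex, every direction is the image of a direction under $Df_i$, and one checks directly from $D\theta^3$ and the rotation $\rho$ (as recorded in \Cref{obs} and \Cref{lem:phi train track}) that the turn $\{\bar{x}, y\}$ needed to close up $w$ into a cyclic word of length $\le 3$ is legal — concretely, adding at most one more letter produces a legal cyclic word, e.g.\ a legal loop such as $abc$ or $def$ up to the marking, which has cyclic length $3$. I expect the main obstacle to be the bookkeeping in this last step: verifying that every legal bi-infinite word through the basepoint of $\tau_0$ has a straddling subword of length $\le 2$ that extends to a legal \emph{cyclic} word of length $\le 3$, rather than merely to a legal linear word. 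This is a finite check using the explicit gate structure, but it must be done carefully because the gates $\{a,b,c\}$ and $\{d,e,f\}$ are large, so many turns are illegal and one must confirm a legal closure genuinely exists in each case. The convergence argument itself is routine once one notes that legal edges have images of unbounded length, so I would not belabor it.
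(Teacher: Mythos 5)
Your proposal is correct and follows essentially the same route as the paper: lift the leaf to bi-infinite legal paths in each $\tau_i$, use the growth of edge-image lengths under the graph maps $\Phi_i$ to cover any finite subsegment of the leaf by $\Phi_i(w)$ for a legal $w$ of length at most two, and dispose of the closing-up claim by a finite check against the gate structure of \Cref{lem:phi train track}. Your extra pigeonhole step to make $w$ constant along a subsequence is a small refinement the paper leaves implicit, not a different argument.
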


  \begin{proof} 
    Let $l$ be a leaf of $\Lambda$ realized as a bi-infinite line in
    $\tau_0$ and let $s$ be any subsegment of $l$, with combinatorial edge
    length $\ell_s >0$ in $\tau_0$. By definition, for every $i$ there is a
    bi-infinite legal path $l_i$ in $\tau_i$ such that $l = \Phi_i(l_i)$.
    Let $i = i(s) \geq 0$ such that the edge length of $x_i$ in $\tau_0$
    under the graph map $\Phi_i$ is $\geq \ell_s$ for all $x \in \{a, b, c,
    d, e, f, g\}$. Thus, there is a segment $s_i$ of $l_i$ of combinatorial
    length at most two in $\{a_i, b_i , c_i, d_i, e_i, f_i, g_i\}$, such
    that $s \subset \Phi_i(s_i)$ (here $\Phi_i$ is a graph map). Now if
    $s_i = x_iy_i$ for $x, y \in \{a,b,c,d,e,f,g\}$, take $w=xy$. Thus we
    see that $\Phi_i (w)$ (here $\Phi_i$ is an automorphism) covers $s$ in
    $\tau_0$. Since this is true for any segment of $l$, we conclude the
    lemma by taking a nested sequence of subsegments of $l$ with edge
    length in $\tau_0$ going to infinity. The fact that legal paths of
    length $\leq 2$ can be closed up to legal loops of length $\leq 3$
    follows from the description of the train track in \Cref{lem:phi train
    track}. \qedhere
  \end{proof}

  Recall that if $(\tau_i')_i$ converges to an arational tree $T$, then we
  can identify $\D((\tau_i')_i)$ with $\D(T)$ by
  \Cref{prop:IsoLength}.

  \begin{lemma}\label{lem:T translation length}

    Suppose $(r_i)_{i \ge 1}$ is positive and that the folding sequence
    $(\tau_i')_i$ converges to an arational tree $T$. Let $w$ be any
    conjugacy class 
    in $\mathbb F_7$ represented by a cyclic word in $\{a,b,c,d,e,f,g\}$
    and their inverses and let $\lambda \in \D(T)$
    correspond to a length measure $(\lambda_i)_i \in \D((\tau_i')_i)$.
    Then \[ \lim_{i \to \infty} \norm{\Phi_i(w)}_{(T,\lambda)} = 0. \]  

  \end{lemma}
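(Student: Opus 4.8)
The plan is to unwind the definitions and use the fact that the length measure $(\lambda_i)_i$ lives on the folding sequence, so $\lambda_i = N_{r_{i+1}}^T \lambda_{i+1}$ (in the notation where $N_r$ is the transition matrix of $\psi_r$), together with the relation between $\|\Phi_i(w)\|_{(T,\lambda)}$ and the combinatorial length of $w$ in $\tau_i'$. Concretely, since $(\tau_i')_i$ converges to $T$ and the morphisms $h_i \colon \tilde\tau_i' \to T$ restrict to isometries on edges, the translation length of $\Phi_i(w)$ in $(T,\lambda)$ is controlled by the $\lambda_i$-length of the loop in $\tau_i'$ representing the conjugacy class $\Phi_i(w)$. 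Because $\tau_i'$ is the rose labeled by $\{\Phi_i(a),\dots,\Phi_i(g)\}$ and $w$ is a \emph{fixed} cyclic word of bounded length $\ell$ in the letters $\{a,\dots,g\}$ and their inverses, the conjugacy class $\Phi_i(w)$ is represented in $\tau_i'$ by a cyclic edge path of combinatorial length at most $\ell$. Hence $\|\Phi_i(w)\|_{(T,\lambda)} \le \ell \cdot \max_{x} \lambda_i(x)$, where the max is over the $7$ petals of $\tau_i'$.

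First I would make this inequality precise: $\|\Phi_i(w)\|_{(T,\lambda)} \le C_w \cdot \mathrm{vol}_{\lambda_i}(\tau_i')$ for a constant $C_w$ depending only on the length of the cyclic word $w$, using that each petal length is at most the total volume. Then I would invoke \Cref{lem:volume0}, which says exactly that $\mathrm{vol}_{\lambda_i}(\tau_i') \to 0$ as $i \to \infty$. Combining the two gives $\|\Phi_i(w)\|_{(T,\lambda)} \to 0$, as desired.

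The one point that needs care is justifying that $\|\Phi_i(w)\|_{(T,\lambda)}$ is bounded above by the $\lambda_i$-length of a representative loop in $\tau_i'$. This follows because, under the identification of $\D((\tau_i')_i)$ with $\D(T)$ from \Cref{prop:IsoLength}, the metric space $(T,\lambda)$ is the limit of the metric graphs $(\tilde\tau_i', \tilde\lambda_i)$ and we have a morphism $h_i \colon \tilde\tau_i' \to T$ which restricts to an isometry on edges (and is $\free$-equivariant, inducing the automorphism tracked by the marking). Since $h_i$ is $1$-Lipschitz — it cannot increase distances, being a morphism that is an isometry on each edge — the translation length of the element $\Phi_i(w)$ acting on $T$ is at most its translation length acting on $\tilde\tau_i'$ with the metric $\tilde\lambda_i$, which in turn is the cyclically reduced $\lambda_i$-length of the loop representing $\Phi_i(w)$ in $\tau_i'$. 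That loop has combinatorial length $\le$ (length of $w$) because $w$ itself, written as a word in $\{a,\dots,g\}^{\pm 1}$, pulls back under the change of marking to the corresponding word in the petals $\{\Phi_i(a),\dots,\Phi_i(g)\}^{\pm 1}$ of $\tau_i'$.

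The main obstacle, such as it is, is bookkeeping the translation-length estimate through the morphism $h_i$ and making sure the word $w$ of bounded length in $\tau_0'$ really does have a bounded-length representative in \emph{every} $\tau_i'$ — this is where it matters that $w$ is a word in the \emph{original} basis, so that the change-of-marking $\tau_0' \to \tau_i'$ (induced by $\Phi_i^{-1}$, a graph map) sends the loop $w$ to a loop of the same combinatorial complexity. Everything else is then an immediate appeal to \Cref{lem:volume0}.
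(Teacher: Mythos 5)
Your proposal is correct and follows essentially the same route as the paper: represent $\Phi_i(w)$ by the loop spelling $w$ in the petals of $\tau_i'$, bound $\norm{\Phi_i(w)}_{(T,\lambda)}$ by $\norm{\Phi_i(w)}_{(\tau_i',\lambda_i)} \le \norm{w}_{\mathrm{word}} \cdot \vol(\tau_i',\lambda_i)$, and invoke \Cref{lem:volume0}. The only cosmetic difference is that you justify $\norm{\Phi_i(w)}_{(T,\lambda)} \le \norm{\Phi_i(w)}_{(\tau_i',\lambda_i)}$ via the $1$-Lipschitz morphism $h_i$, whereas the paper phrases it as monotone non-increase of translation lengths along the folding sequence; these are the same fact.
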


  \begin{proof}

    Under the isomorphism from $\D((\tau_i')_i) \to \D(T)$ that maps
    $(\lambda_i)_i \mapsto \lambda$, the sequence $(\tau_i',\lambda_i)
    \subset \cv[7]$ also converges to $(T,\lambda) \in \partial \cv[7]$.
    Thus, for any $x \in \free[7]$, 
    \[\norm{x}_{(T,\lambda)} = \lim_{i \to \infty}\norm{x}_{(\tau_i',
    \lambda_i)}.\] In fact, the sequence $\norm{x}_{(\tau_i', \lambda_i)}$
    is monotonically non-increasing. Recall that $\tau_0'$ as a marked
    graph is the rose labeled by $\{a,b,c,d,e,f,g\}$. Represent $w$ by a
    loop $c_w$ in $\tau_0'$. The graph $\tau_i'$ is the rose labeled by
    $\{\Phi_i(a),\ldots,\Phi_i(g)\}$. Thus, the loop $c_w$ in $\tau_i'$
    represents the conjugacy class $\Phi_{i}(w)$. This shows 
    \[\norm{\Phi_i(w)}_{(T,\lambda)}\leq
    \norm{\Phi_i(w)}_{(\tau_i',\lambda_i)}\leq
    \norm{w}_{\mbox{word}}\vol(\tau_i',\lambda_i)\]
    where $\norm{w}_{\mbox{word}}$ is the word length of $w$. By
    \Cref{lem:volume0} the last term goes to 0.
    \qedhere
  \end{proof}

  We now come to the main statement of this section.

  \begin{prop}\label{prop:dual lamination}

    Suppose $(r_i)_{i \ge 1}$ is positive and that the folding sequence
    $(\tau_i')_i$ converges to an arational tree $T$. Let $\tilde{\Lambda}$
    be the lamination corresponding to the legal lamination $\Lambda$ of
    the unfolding sequence$(\tau_i)_i$ and let $L(T)$ be the lamination
    dual to $T$. Then $\tilde{\Lambda} \subseteq L(T)$. In particular, if
    $T$ is non-geometric, then $\C(\Lambda) = \C(T)$. 

  \end{prop}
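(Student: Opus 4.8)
The plan is to show the inclusion $\tilde{\Lambda} \subseteq L(T)$ leaf by leaf, using the dynamical description of $L(T)$ as $\bigcap_{\epsilon > 0} L_\epsilon(T)$ together with the combinatorial description of leaves of $\Lambda$ from \Cref{lem:leaf of Lambda}. First I would fix a leaf $l$ of $\Lambda$. By \Cref{lem:leaf of Lambda}, $l$ is a limit of a sequence $\{\Phi_i(w)\}_i$ where each $w = w(i)$ is a legal word of length $\le 2$ in the standard basis, and moreover $w$ can be closed up to a legal loop which is a cyclic word of length $\le 3$; call this conjugacy class $\hat w = \hat w(i)$. The key point is that $\Phi_i(w)$ and $\Phi_i(\hat w)$ differ by a bounded amount (the closing-up segment has bounded length in $\tau_0$), so the bi-infinite geodesic that is the limit of $\{\Phi_i(w)\}_i$ agrees with the limit of the axes of the conjugacy classes $\Phi_i(\hat w)$. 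Then I would invoke \Cref{lem:T translation length}: for any fixed length measure $\lambda \in \D(T)$ (which exists since $T$ is arational, so $\D(T)$ is nonempty, e.g.\ the Lebesgue measure), we have $\norm{\Phi_i(\hat w)}_{(T,\lambda)} \to 0$ as $i \to \infty$. Since there are only finitely many cyclic words of length $\le 3$ in the standard basis, the translation lengths $\norm{\Phi_i(\hat w(i))}_{(T,\lambda)}$ still tend to $0$ even as $\hat w(i)$ varies with $i$ (a finite set of sequences each going to $0$).

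Next I would assemble this into the statement that $l \in L(T)$. Given $\epsilon > 0$, choose $i$ large enough that $\norm{\Phi_i(\hat w(i))}_{(T,\lambda)} < \epsilon$; then the pair $\big((\Phi_i(\hat w(i)))^{-\infty}, (\Phi_i(\hat w(i)))^{\infty}\big)$ lies in $L_\epsilon(T)$ by definition. As $i \to \infty$, these pairs converge in $\partial^2 \free[7]$ to the leaf $l$ (after passing to a subsequence so the bounded closing-up data stabilizes, which is harmless since everything is in a finite set). Because each $L_\epsilon(T)$ is closed and $L_{\epsilon'}(T) \subseteq L_\epsilon(T)$ for $\epsilon' < \epsilon$, and because for any fixed $\epsilon$ all sufficiently large $i$ give pairs in $L_\epsilon(T)$, the limit $l$ lies in $L_\epsilon(T)$ for every $\epsilon > 0$, hence $l \in L(T)$. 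Since $\Lambda$ is $\free[7]$-invariant and closed and $L(T)$ is too, this gives $\tilde\Lambda \subseteq L(T)$.

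For the "in particular" clause: if $T$ is non-geometric, then by \cite{R:ReducingSystems} (since $T$ is arational) it is free and indecomposable, hence in particular $\C(T) = \C(L(T))$ is the cone of currents with support in $L(T)$ by the definition of $\C(T)$ and the Kapovich–Lustig zero-length criterion \cite[Theorem 1.1]{KL:ZeroLength}. We have just shown $\tilde\Lambda \subseteq L(T)$, so $\C(\Lambda) \subseteq \C(T)$. For the reverse inclusion I would argue that $L(T)$ contains no more than the legal lamination: by \Cref{cor:trees 1-simplex}, $\P\D(T) = \P\D((\tau_i')_i)$ is a $1$-simplex, so by \cite{CH:ErgodicCurrents} (or rather the ergometric analogue) the relevant spaces are low-dimensional, and by \Cref{cor:currents 1-simplex} $\P\C(\Lambda)$ is also a $1$-simplex; since $\C(\Lambda) \subseteq \C(T)$ and $\P\C(T)$ is the convex hull of finitely many ergodic currents, matching dimensions forces equality.

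The main obstacle I anticipate is the last step — upgrading $\C(\Lambda) \subseteq \C(T)$ to equality. One clean route is to observe that $\tilde\Lambda$ is the \emph{full} support possibility: any current dual to $T$ has support a sublamination of $L(T)$, but one can show $L(T)$ is contained in the legal lamination by a symmetric "unfolding" argument (leaves of $L(T)$ are limits of short loops, and short loops must eventually be legal in the $\tau_i$, since the illegal-turn count drops along the unfolding by \Cref{lem:lose illegal turns iwip}/\Cref{lem:lose illegal turns}). That would give $L(T) = \tilde\Lambda$ directly and hence $\C(T) = \C(\Lambda)$ without dimension-counting. I would pursue this symmetric argument as the cleaner option, falling back on the dimension count only if needed.
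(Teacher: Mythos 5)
The first half of your argument --- showing $\tilde\Lambda \subseteq L(T)$ by combining \Cref{lem:leaf of Lambda} with \Cref{lem:T translation length}, using the finiteness of the set of legal loops of length $\le 3$ to get uniform convergence, and concluding via the closedness of the sets $L_\epsilon(T)$ --- is exactly the paper's argument and is fine.

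The gap is in the ``in particular'' clause. The paper's proof of $\C(\Lambda)=\C(T)$ is a direct citation: a non-geometric arational tree is free and indecomposable by \cite{R:ReducingSystems}, and \cite[Corollary~1.4]{CHR} then says that any nonempty (closed, invariant) sublamination of the dual lamination of a free indecomposable tree carries the same cone of dual currents; applied to $\tilde\Lambda\subseteq L(T)$ this gives equality immediately. Neither of your two substitutes closes this step. The dimension count fails logically: from $\C(\Lambda)\subseteq\C(T)$ and the fact that $\P\C(\Lambda)$ is a $1$-simplex you cannot conclude $\P\C(T)$ is a $1$-simplex --- it could a priori be a higher-dimensional simplex containing $\P\C(\Lambda)$ as a proper face; moreover \Cref{cor:currents 1-simplex} requires the growth hypothesis $r_{i+1}-r_i\ge i$, which is not assumed in this proposition (only positivity of $(r_i)$ is). Your preferred ``symmetric'' route, proving $L(T)\subseteq\tilde\Lambda$, is not carried out and is likely not even true as stated: for indecomposable trees $L(T)$ is only minimal \emph{up to diagonal leaves}, so $L(T)$ may strictly contain the minimal lamination $\tilde\Lambda$; the correct statement is that the extra (diagonal) leaves cannot carry any current, which is precisely the content of the \cite{CHR} result. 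Also note that \Cref{lem:lose illegal turns iwip} and \Cref{lem:lose illegal turns} control illegal turns along the \emph{folding} direction on $(\tau_i')_i$, whereas membership in $\Lambda$ requires exhibiting a bi-infinite \emph{legal} preimage in every $\tau_i$ of the unfolding sequence, so the sketched mechanism does not transfer as stated. You need the indecomposability-based result from \cite{CHR} (or a proof of it) to finish.
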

  
  \begin{proof}

    Recall by \Cref{lem:arational equivalence}, the lamination dual to an arational tree is independent of the
    length measure on the tree. So fix an arbitrary length
    measure $\lambda \in \D(T)$ on $T$. 
    
    Let $W_3$ be the set of legal loops of length at most three in
    $\{a, b, c, d, e, f, g\}$ and their inverses. By \Cref{lem:T
      translation length}, for every $\epsilon >0$, there exists
    $I_\epsilon >0$ such that for all $i \geq I_\epsilon$,
    $\norm{\Phi_i(w)_{(T,\lambda)}} < \epsilon$, for every $ w\in
    W_3$. Then the bi-infinite line $(\Phi_i(w)^{-\infty},
    \Phi_i(w)^{\infty})$ is in $L_{\epsilon}(T)$ for all $i \geq
    I_{\epsilon}$. Therefore, $$\bigcap_{\epsilon > 0}
    \overline{\bigcup_{\substack{w \in W_3 \\ i \geq I_{\epsilon}}}
      (\Phi_i(w)^{-\infty}, \Phi_i(w)^{\infty})} \subseteq
    \bigcap_{\epsilon > 0} L_{\epsilon}(T). $$ By \Cref{lem:leaf
      of Lambda}, we conclude that $\tilde{\Lambda} \subseteq L(T)$.

    If $T$ is non-geometric and arational, then it is freely indecomposable
    by \cite{R:ReducingSystems}. By \cite[Corollary~1.4]{CHR}, $\C(\Lambda)
    = \C(T)$. \qedhere

  \end{proof}

  The following is the consequence of \Cref{prop:dual
  lamination} and \Cref{cor:currents 1-simplex}. 

  \begin{cor} \label{cor:nue}

    For a positive sequence $(r_i)_{i \ge 1}$ of integers with $r_{i+1}-r_i
    \ge i$, if the folding sequence $(\tau_i')_i$ converges to a
    non-geometric arational tree $T$, then $\P\C(T)$ is a 1-simplex. In
    particular, $T$ is not uniquely ergodic. \qedhere 

  \end{cor}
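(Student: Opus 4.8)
The plan is to obtain the statement by directly combining the two results quoted immediately before it; no genuinely new argument is needed. First I would check that the hypotheses here---that $(r_i)_{i\ge 1}$ is a positive integer sequence with $r_{i+1}-r_i \ge i$---are exactly those of \Cref{cor:currents 1-simplex}. Applying that corollary gives that $\P\C(\Lambda)$ is a 1-simplex, where $\Lambda$ is the legal lamination of the unfolding sequence $(\tau_i)_i$; equivalently, the cone $\C(\Lambda)$ of currents supported on $\tilde\Lambda$ is $2$-dimensional, spanned by two distinct extremal rays.

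Next I would invoke \Cref{prop:dual lamination}. Its standing assumptions---$(r_i)_i$ positive, $(\tau_i')_i$ converging to an arational tree $T$, and $T$ non-geometric---all hold by hypothesis here, so its conclusion $\C(\Lambda) = \C(T)$ applies, where $\C(T)$ is the cone of currents dual to $T$. (The proof of that proposition already absorbs the input that a non-geometric arational tree is freely indecomposable via \cite{R:ReducingSystems} and the identification of $\C(\Lambda)$ with $\C(T)$ via \cite[Corollary~1.4]{CHR}, so nothing further is required at this point.) Projectivizing, $\P\C(T) = \P\C(\Lambda)$, which is therefore a 1-simplex.

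It then remains only to translate ``1-simplex'' into the language of \Cref{sec:background}: a 1-simplex has two distinct endpoints, so $\P\C(T)$ contains more than one projective class, and hence by definition $T$ is not uniquely ergodic. I do not expect any real obstacle at this stage---the substantive work is all in \Cref{cor:currents 1-simplex} (the Perron--Frobenius dynamics of the matrix products $M_{ij}$) and in \Cref{prop:dual lamination} (relating the legal lamination of the unfolding sequence to the dual lamination of the folding limit). The only thing to be careful about is bookkeeping: verifying that the gap condition $r_{i+1}-r_i\ge i$ assumed here is at least as strong as what each cited statement needs, and that ``1-simplex'' is meant throughout in the strict sense of a nondegenerate segment with distinct endpoints, so that the final conclusion ``$T$ is not uniquely ergodic'' is genuinely licensed.
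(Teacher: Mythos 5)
Your proposal is exactly the paper's argument: the corollary is stated there as an immediate consequence of \Cref{prop:dual lamination} and \Cref{cor:currents 1-simplex}, combined by projectivizing the identity $\C(\Lambda)=\C(T)$. Your hypothesis-checking and the final translation of ``1-simplex'' into non-unique ergodicity are correct and require nothing beyond what you wrote.
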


\section{Non-convergence of unfolding sequence}

  \label{sec:limit unfolding}
  
  In this section, fix a sequence $(r_i)_{i \ge 1}$ such that  $r_{i+1}-r_i
  \ge i$. We will show that the corresponding unfolding sequence
  $(\tau_i)_i$ does not converge to a unique point in $\partial \CV[7]$. In
  fact, we will show in \Cref{cor:ergotrees} that it converges to a
  1-simplex in $\partial \CV[7]$.
  
  \medskip 
  
  Recall the folding and unfolding sequences $(\tau_i')_i$ and
  $(\tau_i)_i$, respectively, from \Cref{sec:construction}.
  \[
    \begin{tikzcd} 
      \cdots 
      \arrow[r]  
      & \tau_{i+1} \arrow[r, "M_{r_{i+1}}"] 
      & \tau_i \arrow[r, "M_{r_i}"] 
      & \tau_{r_{i-1}} \arrow[r] 
      & \cdots \arrow[r, "M_{r_3}"] 
      & \tau_2 \arrow[r, "M_{r_2}"] 
      & \tau_1 \arrow[r, "M_{r_1}"] 
      & \tau_0,
    \end{tikzcd}
  \]
  and
  \[
    \begin{tikzcd} 
      \cdots  
      & \tau_{i+1}' \arrow[l]  
      & \tau_i' \arrow{l}[above,pos=.4]{N_{r_{i+1}}} 
      & \tau_{i-1}' \arrow{l}[above,pos=.4]{N_{r_i }} 
      & \cdots \arrow[l] 
      & \tau_2' \arrow{l}[above,pos=.4]{N_{r_3}}  
      & \tau_1' \arrow{l}[above,pos=.4]{N_{r_2}} 
      & \tau_0' \arrow{l}[above,pos=.4]{N_{r_1}}. 
    \end{tikzcd}
  \]
  Here $\tau_i$ and $\tau_i'$ as marked graphs belong to the same
  simplex in $\CV[7]$. Also, recall the matrices defined for all $i\ge 0$ 
  \[P_i = \frac{1}{\kappa_B \lambda_B^{r_{i+1} }} M_{r_i} M_{r_{i+1}}
  \qquad
  \text{and}
  \qquad
  Q_i = \frac{1}{\kappa_C \lambda_C^{r_{i+1} }} N_{r_{i+1}}N_{r_i},\]  
  and the existence of the limiting matrices from \Cref{lem:pairprod1} and
  \Cref{lem:pairprod2} 
  \[Y_i = \lim_{k \to \infty}P_iP_{i+2} \cdots P_{i+2k} 
  \qquad \text{and} \qquad Z_i = \lim_{k \to \infty}Q_{i+2k} \cdots
  Q_{i+2} Q_i.
  \] 
  For all even $2m \ge 0$, 
  \[ c_{2m} = \Big( \kappa_B^m \lambda_B^{r_2} \lambda_B^{r_4} \cdots
  \lambda_B^{r_{2m}} \Big) \Big( \kappa_C^m \lambda_C^{r_2} \lambda_C^{r_4}
  \cdots \lambda_C^{r_{2m}} \Big).
  \] 
  Similarly, for all odd $2m+1 \ge 1$, set 
  \[ c_{2m+1} = \Big( \kappa_B^m \lambda_B^{r_1}\lambda_B^{r_3} \cdots
  \lambda_B^{r_{2m+1}} \Big) \Big( \kappa_C^m \lambda_C^{r_1}
  \lambda_C^{r_3} \cdots \lambda_C^{r_{2m+1}} \Big).
  \] 
  Let $\ell=\ell_0 \in \R^{|E\tau_0|}$ be a positive length vector on
  $\tau_0$. Then $\ell$ determines a length vector $\ell_i$ on each
  $\tau_i$ given by $\ell_i = M_{r_i}^T \ldots M_{r_1}^T \ell \in
  \R^{|E\tau_i|}$. We set $\ell_e^T = \ell^T Y_1$ and $\ell_o^T = \ell^T
  \frac{M_{r_1}}{\lambda_B^{r_1}}Y_2$. Note that both $\ell_e$ and $\ell_o$
  are positive vectors. For $\ell_e$, this follows since $\ell$ is a
  positive vector and $Y_1$ is a non-negative matrix. Similarly, $\ell^T
  M_{r_1}$ is positive and $Y_2$ is non-negative, so $\ell_o$ is also
  positive.
  
  We will show the sequence $(\tau_i, \ell_i)_i \subset \CV[7] $, up to
  rescaling, does not have a unique limit in $\partial \CV[7]$. We start by
  showing the even sequence and the odd sequence do converge, up to
  scaling. More precisely:

  \begin{lemma}\label{lem:TeTo}
    
    For any positive length vector $\ell=\ell_0$ on $\tau_0$, the corresponding
    even sequence $\left( \tau_{2m}, \frac{\ell_{2m}}{c_{2m}} \right)$ 
    and odd sequence $\left( \tau_{2m+1}, \frac{\ell_{2m+1}}{c_{2m+1}} \right)$
    of metric graphs converge to two points $T_e$ and $T_o$ respectively
    in $\partial \CV[7]$. In fact, for any conjugacy class $x \in
    \free[7]$, there exists an index $i_x \ge 0$, a vector $v_x \in
    \R^{|E\tau_{i_x}'|}$, and matrices $Y_x^e$ and $Y_x^o$, such that 
    \[ 
    \norm{x}_{T_e} = \ell_e^T Y_x^e v_x 
    \qquad \text{and} \qquad
    \norm{x}_{T_o} = \ell_o^T Y_x^o v_x 
    \]

  \end{lemma}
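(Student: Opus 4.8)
The plan is to analyze the two subsequences separately using the block structure of the transition matrices and the limiting products established in \Cref{lem:pairprod1} and \Cref{lem:pairprod2}. First I would compute $\ell_{2m}$ explicitly: by definition $\ell_{2m} = M_{r_{2m}}^T \cdots M_{r_1}^T \ell$, and grouping the factors into consecutive pairs gives $\ell_{2m}^T = \ell^T M_{r_1} M_{r_2} \cdots M_{r_{2m}} = \ell^T (M_{r_1}M_{r_2})(M_{r_3}M_{r_4})\cdots(M_{r_{2m-1}}M_{r_{2m}})$. After dividing by the normalizing constant $\kappa_B^m \lambda_B^{r_2}\cdots\lambda_B^{r_{2m}}$ hidden in $c_{2m}$, this becomes $\ell^T P_1 P_3 \cdots P_{2m-1}$ up to the appropriate reindexing — here I must be a little careful, since $P_i = \frac{1}{\kappa_B\lambda_B^{r_{i+1}}}M_{r_i}M_{r_{i+1}}$, so the product $P_1 P_3 P_5\cdots$ telescopes the even-indexed pairs correctly. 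The point is that \Cref{lem:pairprod1}(1) tells us $P_1 P_3 \cdots P_{2m-1} \to Y_1$ as $m\to\infty$ (after matching it to the convergent sequence $\{P_iP_{i+2}\cdots\}$), and hence $\ell_{2m}/c_{2m}$, as a covector, converges to $\ell^T Y_1 = \ell_e^T$. The analogous computation for the odd sequence gives $\ell_{2m+1}^T/c_{2m+1} \to \ell^T M_{r_1} Y_2 / \lambda_B^{r_1} = \ell_o^T$, using that peeling off the first factor $M_{r_1}$ leaves an even product starting at index $2$.

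Next I would translate this convergence of length covectors into convergence of points in $\partial\CV[7]$ via translation length functions. Recall that $\cv$ embeds in $\R^{\free}$ by $T\mapsto(\|g\|_T)_{g}$, and $\CVclo$ is compact, so to show $(\tau_{2m},\ell_{2m}/c_{2m})$ converges it suffices to show that for each fixed conjugacy class $x$, the sequence $\|x\|_{(\tau_{2m},\,\ell_{2m}/c_{2m})}$ converges, and that the limit is not identically zero (so the projective limit is a genuine tree). For a conjugacy class $x$ represented by a cyclic edge-path in some $\tau_{i_x}$, once $m$ is large enough that $2m \ge i_x$, the translation length $\|x\|_{(\tau_{2m},\ell_{2m})}$ equals $\ell_{2m}^T v_x$ where $v_x\in\R^{|E\tau_{i_x}|}$ records how many times the $\Phi$-image of $x$ crosses each edge of $\tau_{i_x}$, transported forward — more precisely, $v_x$ counts edge-crossings of the legal representative of $x$ in $\tau_{i_x}$ and the crossings in $\tau_{2m}$ are obtained by applying the transition matrices $M_{r_{i_x+1}}\cdots M_{r_{2m}}$, i.e. $\|x\|_{(\tau_{2m},\ell)} = \ell^T M_{r_1}\cdots M_{r_{2m}} \cdot (\text{the } \tau_{i_x}\text{-crossing vector of }x)$ once we are past the stage where folding further changes the combinatorics. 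Here I should use that $\tau_{i_x}\to\tau_0$ are train track maps, so the legal representative of $x$ stabilizes combinatorially in $\tau_i$ for $i$ large, which is exactly why an index $i_x$ with the stated property exists. Dividing by $c_{2m}$ and passing to the limit using the covector convergence above, $\|x\|_{T_e} = \ell_e^T Y_x^e v_x$ where $Y_x^e$ is the (finite) matrix product $M_{r_{i_x+1}}\cdots$ packaged as the appropriate limiting block — or more honestly, $Y_x^e$ is whatever finite product of $P_j$'s and leftover single $M$'s connects index $1$ to index $i_x$, which is a fixed matrix independent of $m$. The odd case is identical with $\ell_o$ and a shifted product $Y_x^o$.

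The main obstacle I anticipate is bookkeeping the indices and normalizing constants so that the telescoping of $M$'s into $P_i$'s (and $N$'s into $Q_i$'s, which don't actually appear here but whose transposes do via the identification of $\tau_i,\tau_i'$) lines up exactly with the convergent products in \Cref{lem:pairprod1} and \Cref{lem:pairprod2}, including the parity shift between the even and odd subsequences and the single leftover factor $M_{r_1}/\lambda_B^{r_1}$ in the odd case. A secondary point requiring care is verifying that the limiting length functions $\|\cdot\|_{T_e}$ and $\|\cdot\|_{T_o}$ are nonzero — this follows because $\ell_e,\ell_o$ are strictly positive covectors (already noted in the text) and $Y_x^e v_x$, $Y_x^o v_x$ are nonzero non-negative vectors for $x$ nontrivial, the latter because the relevant matrices have full rank or the relevant limiting blocks $Y_i(e_1)$ are nonzero by \Cref{lem:pairprod1}(3). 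Finally, convergence in $\R^{\free}$ for every coordinate plus compactness of $\CVclo$ upgrades pointwise convergence of length functions to convergence of the projective classes to well-defined points $T_e, T_o \in \partial\CV[7]$, since $\tau_{2m}$ cannot stay in $\CV[7]$ (the volumes $\vol(\tau_{2m},\ell_{2m}/c_{2m})$ tend to a finite positive limit only after projectivizing, while the combinatorial complexity forces the limit into the boundary — alternatively this follows a posteriori from the later identification of $T_e,T_o$ with length measures on the arational tree $T$).
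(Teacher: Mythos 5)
There is a genuine gap, and it sits exactly at the step you wave away. You compute the translation length as $\norm{x}_{(\tau_{2m},\ell_{2m})}=\ell_{2m}^T v_x$ with $v_x$ ``transported forward'' by $M$-matrices, and you assert that the $N$'s and $Q_i$'s ``don't actually appear here.'' They must appear. The matrices $M_{r_j}$ are transition matrices of the unfolding maps $\tau_j\to\tau_{j-1}$, so they move edge-crossing data toward \emph{smaller} index; to express the crossing vector of the legal representative of $x$ in $\tau_{2m}=\tau_{2m}'$ starting from $v_x\in\R^{|E\tau_{i_x}'|}$ you must push forward along the folding maps, i.e.
\[
\norm{x}_{(\tau_{2m},\ell_{2m})}=\bigl(\ell^T M_{r_1}\cdots M_{r_{2m}}\bigr)\bigl(N_{r_{2m}}\cdots N_{r_{i_x+1}}v_x\bigr),
\]
and the second factor grows like $\kappa_C^{m}\lambda_C^{r_2}\cdots\lambda_C^{r_{2m}}$ (up to the parity/offset bookkeeping). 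This is precisely why $c_{2m}$ contains the $\lambda_C,\kappa_C$ factors in addition to the $\lambda_B,\kappa_B$ ones you use. With your formula, where the crossing data contributes only a fixed finite matrix ``$Y_x^e$ independent of $m$,'' dividing by the full $c_{2m}$ would send every translation length to $0$ and produce no tree at all; you half-notice this when you normalize only by the $\kappa_B^m\lambda_B^{r_2}\cdots\lambda_B^{r_{2m}}$ part of $c_{2m}$ and never say what absorbs the rest.

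The correct argument therefore needs \Cref{lem:pairprod2} in an essential way: after grouping, $N_{r_{2m}}\cdots N_{r_{i_x+1}}v_x$, suitably normalized, converges via $Q_{2m-1}\cdots Q_{i_x+3}Q_{i_x+1}\to Z_{i_x+1}$ (or the parity-shifted variant with one leftover factor $N_{r_{i_x+1}}/\lambda_C^{r_{i_x+1}}$), so that $Y^e_x$ and $Y^o_x$ are the matrices $Z_{i_x+1}/c^e_x$, $\bigl(Z_{i_x+2}/c^e_x\bigr)\bigl(N_{r_{i_x+1}}/\lambda_C^{r_{i_x+1}}\bigr)$, etc.\ --- limits of infinite $Q$-products, not finite products of $P_j$'s. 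Your treatment of the $M$-side (telescoping into $P_1P_3\cdots P_{2m-1}\to Y_1$, giving $\ell_e$, and the shifted product giving $\ell_o$) matches the paper, as does the existence of $i_x$ (though the clean justification is \Cref{lem:lose illegal turns}: folding strictly decreases the number of illegal turns, so $x$ becomes legal in $\tau'_{i}$ for $i$ large, which is why $v_x$ lives on $\tau'_{i_x}$ rather than $\tau_{i_x}$). But without the $N/Q/Z$ half of the computation the stated formulas for $\norm{x}_{T_e}$ and $\norm{x}_{T_o}$ cannot be obtained.
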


  \begin{proof}
    Let $x \in \free[7]$ be a cyclically reduced representative of its
    conjugacy class. By \Cref{lem:lose illegal turns}, there exists $i
    \geq 0$ such that $x$ is legal in $\tau_{i}'$. Let $i_x$ be the
    smallest index among such $i$. Then we can represent $x$ by a vector
    $v_x$ in $\R^{|E\tau_{i_x}'|}$ and by the vector
    $N_{r_i}\ldots N_{{i_x+1}} v_{x}$ in $\R^{|E\tau_i|}$ for $i
    \geq i_x$. Thus, for all $i \ge i_x$, we have
    \[
      \norm{x}_{(\tau_i,\ell_i)} = \Big( \ell^T  M_{r_1} \cdots
      M_{r_i} \Big) \Big( N_{r_i} \cdots N_{{i_x+1}} v_x
      \Big).
    \]
    If $i_x$ is even, then write $i_x = 2m_x$, and set 
    \[ 
    c_x^e = \kappa_C^{m_x} \lambda_C^{r_2} \lambda_C^{r_4} \cdots
    \lambda_C^{r_{i_x}}
    \qquad \text{and} \qquad 
    c_x^o = \kappa_C^{m_x} \lambda_C^{r_1} \lambda_C^{r_3} \cdots
    \lambda_C^{r_{i_x-1}}. \] 
    If $i_x$ is odd, then write $i_x = 2m_x+1$, and set 
    \[ 
    c_x^e = \kappa_C^{m_x+1} \lambda_C^{r_2} \lambda_C^{r_4}\cdots
    \lambda_C^{r_{i_x-1}}
    \qquad \text{and} \qquad 
    c_x^o = \kappa_C^{m_x} \lambda_C^{r_1} \lambda_C^{r_3} \cdots
    \lambda_C^{r_{i_x}}. \] 
    First suppose $i_x$ is even. Then for all even $2m \ge i_x$, we have
    \begin{align*}
      \norm{x}_{ \left( \tau_{2m}, \frac{\ell_{2m}}{c_{2m}}\right)} 
      & = \frac{\norm{x}_{(\tau_{2m}, \ell_{2m})}}{c_{2m}} \\[.5em]
      & = \frac{\ell^T
      \Big(P_1 P_3 \cdots P_{2m-1} \Big) \Big( Q_{2m-1} \cdots
      Q_{i_x+3} Q_{i_x+1} \Big) v_x}{c_x^e} \\[.5em]
      &\xrightarrow{m \to \infty} \frac{\ell^T Y_1 Z_{i_x+1}
      v_x}{c_x^e}
      = \ell_e^T \left( \frac{Z_{i_x+1}}{c_x^e}\right) v_x
    \end{align*}
     and for odd $2m+1 \ge i_x$, we have
    \begin{align*}
      \norm{x}_{ \left( \tau_{2m+1}, \frac{\ell_{2m+1}}{c_{2m+1}}\right)} 
      & = \frac{\norm{x}_{(\tau_{2m+1}, \ell_{2m+1})}}{c_{2m+1}} \\[.5em]
      & = \frac{\ell^T \frac{M_{r_1}}{\lambda_B^{r_1}} \Big( P_2 P_4 \cdots
      P_{2m} \Big) \Big( Q_{2m} \cdots Q_{i_x+4} Q_{i_x+2} \Big) N_{i_x+1}
      v_x}{c_x^o \lambda_C^{r_{i_x+1}}} \\[.5em]
      & \xrightarrow{m \to \infty}
      \frac{\ell^T \frac{M_{r_1}}{ \lambda_B^{r_1}} Y_2 Z_{i_x+2}
      N_{{i_x+1}} v_x }{c_x^o}
      = 
      \ell_o^T \left(\frac{Z_{i_x+2}}{c_x^o}\frac{N_{{i_x+1}}}{
      \lambda_C^{i_x+1}} \right) v_x 
    \end{align*}
    Now suppose $i_x$ is odd. Then for all even $2m \ge i_x$, we have
    \begin{align*}
      \norm{x}_{ \left( \tau_{2m}, \frac{\ell_{2m}}{c_{2m}}\right)} 
      & = \frac{\ell^T \Big(P_1 P_3\cdots P_{2m-1} \Big) \Big( Q_{2m-1}
      \cdots Q_{i_x+3} \Big) N_{r_{i_x+1}} v_x}{c_x^e \lambda_C^{r_{i_x+1}}} \\[1em]
      & \xrightarrow{m \to \infty} \ell_e^T \left( \frac{
        Z_{i_x+2}}{c_x^e}\frac{N_{r_{i_x+1}}}{\lambda_C^{r_{i_x+1}}} \right) v_x.
    \end{align*}
    and for odd $2m+1 \ge i_x$, we have
    \begin{align*}
      \norm{x}_{ \left( \tau_{2m+1}, \frac{\ell_{2m+1}}{c_{2m+1}}\right)} 
      & = \frac{\ell^T \frac{M_{r_1}}{\lambda_B^{r_1}} \Big( P_2
      P_4 \cdots
      P_{2m} \Big) \Big( Q_{2m} \cdots Q_{i_x+3} Q_{i_x+1} \Big)
      v_x}{c_x^o} \\[.5em]
      & \xrightarrow{m \to \infty}
      \ell_o^T \left(\frac{Z_{i_x+1}}{c_x^o} \right) v_x.
    \end{align*}
    Either way, for any conjugacy class $x$ in $\free[7]$, both
    \[\norm{x}_{T_e} =
      \lim_{m \to \infty} \norm{x}_{ \left( \tau_{2m},
      \frac{\ell_{2m}}{c_{2m}}\right)}
      \qquad \text{and} \qquad
      \norm{x}_{T_o} =
      \lim_{m \to \infty}
      \norm{x}_{ \left( \tau_{2m+1}, \frac{\ell_{2m+1}}{c_{2m+1}}\right)} 
    \]
    are well-defined and have the desired form. \qedhere 
  \end{proof}
   
  We now want to show $T_e$ and $T_o$ are not scalar multiples of each
  other. In fact, the following lemma will allow us to show that $T_e$ and
  $T_o$ are the extreme points of the simplex $\P\D(T)$. 

  \begin{lemma}\label{lem:ratio}
    There exist two sequences $\alpha_i$ and $\beta_i$ of conjugacy classes
    of elements of $\free[7]$ such that the following holds. For any
    positive length vector $\ell=\ell_0$ on $\tau_0$, let $T_e$ and $T_o$
    be the respective limiting trees in $\partial \CV[7]$ for $\left(
    \tau_{2m}, \frac{\ell_{2m}}{c_{2m}} \right)$ and $\left( \tau_{2m+1},
    \frac{\ell_{2m+1}}{c_{2m+1}} \right)$. Then
    \[ \frac{||\alpha_i||_{T_o}}{||\alpha_i||_{T_e}} \xrightarrow{i \to
    \infty} \infty, \qquad \text{and} \qquad
    \frac{||\beta_i||_{T_o}}{||\beta_i||_{T_e}}
    \xrightarrow{i \to \infty} 0.\]
  \end{lemma}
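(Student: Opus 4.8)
The idea is to take $\alpha_i$ and $\beta_i$ to be conjugacy classes of the form $\Phi_j(c_0)$ for a single fixed short legal loop $c_0$, with $j$ even for the $\alpha$'s and $j$ odd for the $\beta$'s; the parity of $j$ is exactly what makes the two ratios blow up in opposite directions. Concretely, fix $c_0$ to be the loop $be$. Using the gate structure on $\tau_j'$ from \Cref{lem:psi train track} one checks that $c_0$ is legal in every $\tau_j'$, and its edge-count vector $v_{c_0}=e_2+e_5$ has a nonzero entry in \emph{both} coordinate blocks $\{2,3,4\}$ (the edges $b,c,d$) and $\{5,6,7\}$ (the edges $e,f,g$) --- this last property is what makes the non-vanishing estimates below work. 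Set $\alpha_i=\Phi_{2i}(c_0)$ and $\beta_i=\Phi_{2i+1}(c_0)$.

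Since $c_0$ is legal in $\tau_j'$ and each folding map $\psi_{r_k}$ carries legal loops to legal (hence reduced) loops, the cyclically reduced representative of $\Phi_j(c_0)$ in $\tau_i'$ for $i\ge j$ has edge-count vector $N_{r_i}\cdots N_{r_{j+1}}v_{c_0}$ with no cancellation. Hence the translation-length computation in the proof of \Cref{lem:TeTo} applies verbatim with the legal (but not necessarily minimal) index $j$ in place of $i_x$. For $j=2i$ even it gives
\[
  \norm{\alpha_i}_{T_e}=\ell_e^{T}\,\frac{Z_{j+1}}{c^{e}}\,v_{c_0},
  \qquad
  \norm{\alpha_i}_{T_o}=\ell_o^{T}\,\frac{Z_{j+2}}{c^{o}}\,\frac{N_{r_{j+1}}}{\lambda_C^{r_{j+1}}}\,v_{c_0},
\]
where $c^{e}=\kappa_C^{\,i}\lambda_C^{r_2}\lambda_C^{r_4}\cdots\lambda_C^{r_{2i}}$ and $c^{o}=\kappa_C^{\,i}\lambda_C^{r_1}\lambda_C^{r_3}\cdots\lambda_C^{r_{2i-1}}$; for $j=2i+1$ odd the two formulas are interchanged (the factor $N_{r_{j+1}}/\lambda_C^{r_{j+1}}$ moves onto the $T_e$ side, with the analogous odd products $c^e,c^o$). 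Taking the ratio in the even case,
\[
  \frac{\norm{\alpha_i}_{T_o}}{\norm{\alpha_i}_{T_e}}
  =\frac{c^{e}}{c^{o}}\cdot
   \frac{\ell_o^{T}\,Z_{j+2}\,(N_{r_{j+1}}/\lambda_C^{r_{j+1}})\,v_{c_0}}
        {\ell_e^{T}\,Z_{j+1}\,v_{c_0}},
\]
and symmetrically for $\beta_i$.

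Now $c^{e}/c^{o}=\lambda_C^{(r_2-r_1)+(r_4-r_3)+\cdots+(r_{2i}-r_{2i-1})}$, and since $\lambda_C>1$ and $r_{k+1}-r_k\ge k$ this tends to $\infty$ as $i\to\infty$; for the odd index one gets instead $c^{e}/c^{o}=\kappa_C\,\lambda_C^{-r_1-\sum_{k=1}^{i}(r_{2k+1}-r_{2k})}\to 0$. For the remaining factor, \Cref{lem:Z} and \Cref{lem:pairprod2} give $Z_m\to Z$ and $N_{r_m}/\lambda_C^{r_m}\to N_\infty$, so it converges to $(\ell_o^{T}ZN_\infty v_{c_0})/(\ell_e^{T}Zv_{c_0})$ in the even case and to $(\ell_o^{T}Zv_{c_0})/(\ell_e^{T}ZN_\infty v_{c_0})$ in the odd case. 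The crucial point is that these limits are finite and positive: $\ell_e^{T}=\ell^{T}Y_1$ and $\ell_o^{T}=\ell^{T}(M_{r_1}/\lambda_B^{r_1})Y_2$ are nonzero, non-negative, and supported exactly on coordinates $\{1,2,3\}$ (by \Cref{lem:pairprod1}(2),(3)), $Z$ sends $v_{c_0}=e_2+e_5$ onto a positive multiple of the Perron--Frobenius eigenvector of $C$, which lives in coordinates $\{1,2,3\}$, and $N_\infty$ sends $e_5$ onto another such positive multiple; pairing two vectors both supported on $\{1,2,3\}$ and positive there is nonzero. Combining, $\norm{\alpha_i}_{T_o}/\norm{\alpha_i}_{T_e}\to\infty$ and $\norm{\beta_i}_{T_o}/\norm{\beta_i}_{T_e}\to 0$.

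The step that needs the most care is this last non-vanishing claim: one must pin down exactly which coordinates the matrices $Y_1,Y_2,Z,N_\infty$ annihilate, and choose $c_0$ so that $v_{c_0}$ survives both $Z$ and $ZN_\infty$ after pairing with $\ell_e,\ell_o$. This forces $c_0$ to use an edge from $\{b,c,d\}$ and one from $\{e,f,g\}$, and amounts to unwinding the block forms of $M_r,N_r$ together with the Perron--Frobenius structure of $B$ and $C$ --- routine, but essential, since without it the blow-up of $c^{e}/c^{o}$ could be cancelled by a degenerating prefactor.
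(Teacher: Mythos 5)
Your proof is correct and follows essentially the same route as the paper's: both rest on the translation-length formulas of \Cref{lem:TeTo} applied to the orbit of a fixed short legal loop, and on the divergence/vanishing of the ratio $c^e/c^o$ of normalizing constants. The only difference is the test loop --- the paper takes $\Phi_i(e)$ (vector $e_5$, for which $Ze_5=0$, so one pairing degenerates to $0$ and reinforces the limit), whereas you take $\Phi_j(be)$ (vector $e_2+e_5$) and arrange both pairings to have finite positive limits so that the ratio is governed purely by $c^e/c^o$; both variants are valid.
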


  \begin{proof}
    Take the letter $e \in \free[7]$ and recall the automorphisms $\Phi_i$
    used to define the folding and unfolding sequences. Set $x_i =
    \Phi_i(e)$. For each $i$, $x_i$ is legal in $\tau_i'$ and is
    represented by the vector $e_5=(0,0,0,0,1,0,0)^T$ in $\tau_i'$.

    Using notation from \Cref{lem:TeTo}, set $c_{i}^e = c_{x_i}^e$ and
    $c_{i}^o = c_{x_i}^e$. Note here $i$ is the smallest index such that
    $x_i$ is legal in $\tau_i'$. We compare the ratio of $c_i^o$ and
    $c_i^e$. Since $r_{i+1} - r_i \to \infty$, we have
    \[
      \frac{c_{2i}^e}{c_{2i}^o} = 
      \frac{\lambda_C^{r_2} \cdots \lambda_C^{r_{2i}}}
           {\lambda_C^{r_1} \cdots \lambda_C^{r_{2i-1}}}
      \xrightarrow{i \to \infty} \infty,
      \qquad
      \text{while}
      \qquad
      \frac{c_{2i+1}^e}{c_{2i+1}^o} = 
      \frac{\kappa_C}{\lambda_C^{r_1}}
      \frac{\lambda_C^{r_2} \cdots \lambda_C^{r_{2i}}}
           {\lambda_C^{r_3} \cdots \lambda_C^{r_{2i+1}}}
      \xrightarrow{i \to \infty} 0. 
    \]
    
    Recall that both $\ell_e$ and $\ell_o$ are positive and by
    \Cref{lem:pairprod2} the sequence $Z_i $ converges to $Z$. Since $Ze_5$
    is the zero vector, by continuity of the
    dot product, 
    \[  \lim_{i \to \infty} \ell_e^T Z_{2i+1} e_5 = \ell_e^T Z e_5 = 0 
    \qquad \text {and} \qquad \lim_{i \to \infty} \ell_o^T Z_{2i+1} e_5 =
    \ell_o^T Z e_5 = 0. \]
    
    Next let $N_\infty = \lim_{i \to \infty} \frac{N_{r_i}}{
    \lambda_C^{r_i}}$ and recall by  \Cref{lem:MYMZ} that the vector $Z
    N_\infty e_5=(\star,\star,\star,0,0,0,0)$ is non-negative. Thus there
    are positive constants $A$ and $B$ such that, 
    \[  
    \lim_{i \to \infty} \ell_e ^T \left (Z_{2i+2}
    \frac{N_{r_{2i+1}}}{\lambda_C^{r_{2i+1}}}\right ) e_5 =
    \ell_e^T  ZN_\infty e_5
    = A > 0, \]
    and
    \[  
    \lim_{i \to \infty} \ell_o^T \left(Z_{2i+2} \frac{N_{r_{2i+1}}}{
    \lambda_C^{r_{2i+1}}}\right) e_5 = \ell_o^T  Z N_\infty e_5
    = B > 0.\]
    
    Combining the above observations and the formulas for length of $x_i$
    in $T_e$ and $T_o$ obtained in \Cref{lem:TeTo} we get:
    \[
    \frac{\norm{x_{2i}}_{T_o}}{\norm{x_{2i}}_{T_e}}
      =
      \frac{\ell_o^T \left (Z_{2i+2}
      \frac{N_{r_{2i+1}}}{\lambda_C^{r_{2i+1}}}\right ) e_5}{\ell_e^T \left
      ( Z_{2i+1} \right ) e_5}
      \frac{c_{2i}^e}{c_{2i}^o} \quad \xrightarrow{i \to \infty} \quad
      \frac{A}{0} \cdot \infty 
    \]
     \[
      \frac{\norm{x_{2i+1}}_{T_o}}{\norm{x_{2i+1}}_{T_e}}
      =
      \frac{\ell_o^T \left ( Z_{2i+1} \right ) e_5}{\ell_e^T \left
      (Z_{2i+2} \frac{N_{r_{2i+1}}}{ \lambda_C^{r_{2i+1}}}\right ) e_5}
      \frac{c_{2i+1}^e}{c_{2i+1}^o} \quad \xrightarrow{i \to \infty} \quad
      \frac{0}{B} \cdot 0 
    \]
    Setting $\alpha_i = x_{2i}$ and $\beta_i= x_{2i+1}$ finishes the proof.
    \qedhere 
  \end{proof}

  \begin{cor}\label{cor:ergotrees}

    For a sequence $(r_i)_{i \ge 1}$ with $r_{i+1}-r_i \ge i$, if the
    folding sequence $(\tau_i')_i$ converges to an arational tree $T$, then
    for any positive length vector $\ell_0$ on $\tau_0$, the limit set in
    $\partial \CV[7]$ of the rescaled unfolding sequence $(\tau_i,\ell_i)$
    is always the 1-simplex $\P\D(T)$. 

  \end{cor}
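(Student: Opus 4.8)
The plan is to combine the three ingredients already assembled: (i) by \Cref{cor:trees 1-simplex}, $\P\D(T)$ is a $1$-simplex, and under the identification of \Cref{prop:IsoLength} it is spanned by two ergodic length measures; (ii) by \Cref{lem:TeTo}, the even and odd subsequences of the rescaled unfolding sequence $(\tau_i,\ell_i/c_i)$ converge to points $T_e,T_o\in\partial\CV[7]$; (iii) by \Cref{lem:ratio}, $T_e$ and $T_o$ are projectively distinct. First I would show that each accumulation point of $(\tau_i,\ell_i)$ lies in $\P\D(T)$. Since $\tau_i$ and $\tau_i'$ occupy the same simplex of $\CV[7]$ for every $i$, and since the lengths $\ell_i$ define a current on the unfolding sequence whose pairing with any length measure $(\lambda_i)_i\in\D((\tau_i')_i)$ is, up to the normalizing constants, independent of $i$, the dual pairing $\la\,\cdot\,,\,\cdot\,\ra$ of \cite{KL:IntersectionForm} shows that any subsequential limit $S$ of $(\tau_i,\ell_i)$ satisfies $\la S,\mu\ra=0$ for every current $\mu$ dual to $T$; by \cite[Theorem 1.1]{KL:ZeroLength} and \Cref{cor:nue} (or more directly, via \Cref{prop:dual lamination}, since the legal lamination feeds $T$'s dual lamination), this forces $L(S)\supseteq L(T)$, hence $S\sim T$ by \Cref{lem:arational equivalence}, i.e.\ $S\in\P\D(T)$. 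Thus the limit set of $(\tau_i,\ell_i)$ is contained in $\P\D(T)$.

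Next I would identify $T_e$ and $T_o$ with the two extreme points of $\P\D(T)$. Both $T_e$ and $T_o$ lie in $\P\D(T)$ by the previous paragraph, and by \Cref{lem:ratio} they are distinct, so they are two distinct points of a $1$-simplex. To see they are the \emph{endpoints}, I would use the ratio estimates: \Cref{lem:ratio} produces conjugacy classes $\alpha_i,\beta_i$ with $\norm{\alpha_i}_{T_o}/\norm{\alpha_i}_{T_e}\to\infty$ and $\norm{\beta_i}_{T_o}/\norm{\beta_i}_{T_e}\to 0$. If $T_e$ were an interior point of the simplex, it would dominate (up to a bounded multiplicative constant) every length measure in $\P\D(T)$ on every conjugacy class, contradicting the first ratio; similarly for $T_o$ and the second ratio. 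Hence $\{T_e,T_o\}$ is exactly the set of vertices of the $1$-simplex $\P\D(T)$.

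Finally, I would upgrade ``the limit set contains $\{T_e,T_o\}$ and is contained in $\P\D(T)$'' to ``the limit set equals $\P\D(T)$''. For this I would show that convex combinations of $T_e$ and $T_o$ are also accumulation points, by exhibiting, for each $t\in[0,1]$, a subsequence $(\tau_{i_k},\ell_{i_k})$ converging to the point $tT_e+(1-t)T_o$: one rescales $\ell_{i_k}$ by an interpolated constant between the even normalizer $c_{2m}$ and the odd normalizer $c_{2m+1}$ chosen so that the projective limit of the translation-length functions is the corresponding convex combination (this works because the even and odd blocks of $M_{r_i}$ contribute comparably once $r_{i+1}-r_i\to\infty$, so a diagonal/interpolation argument along mixed even–odd strings realizes every ratio). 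Since $\partial\CV[7]$ is compact and the limit set is closed, and since it is sandwiched between the convex hull of $\{T_e,T_o\}$ and $\P\D(T)$, both of which equal the $1$-simplex with vertices $T_e,T_o$, we conclude the limit set is exactly $\P\D(T)$. The main obstacle I anticipate is the last step: making precise the interpolation that realizes \emph{every} convex combination as a genuine subsequential limit (not merely the two endpoints), i.e.\ controlling the rescaled length functions $\norm{x}_{(\tau_i,\ell_i/c)}$ uniformly in the conjugacy class $x$ as both the index and the normalizing constant $c$ vary; this requires the convergence in \Cref{lem:TeTo} to be suitably uniform, which in turn rests on the matrix-product estimates of \Cref{lem:pairprod1} and \Cref{lem:pairprod2} together with \Cref{lem:convergence} from the appendix.
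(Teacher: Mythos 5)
Your skeleton (show the accumulation points lie in $\P\D(T)$, then show $T_e,T_o$ are the extreme points of that $1$-simplex via \Cref{lem:ratio}) matches the paper's, and your extreme-point step is exactly the paper's argument. But there are two problems. First, your route to ``every accumulation point $S$ lies in $\P\D(T)$'' via the Kapovich--Lustig pairing is gappy: from $\la S,\mu\ra=0$ for all $\mu\in\C(T)$ you only get $\supp(\mu)\subseteq L(S)$ for those $\mu$, which is a containment in the wrong direction for \Cref{lem:arational equivalence}, and that lemma moreover requires $S$ itself to be arational, which you have not established. The paper's argument is much more direct: $\tau_i$ and $\tau_i'$ occupy the same simplex of $\CV[7]$ for every $i$, so the unfolding sequence projects to the same quasigeodesic in $\FreeF[7]$ as the folding sequence, which converges to $[T]\in\partial\FreeF[7]$; hence any subsequential limit is a tree equivalent to $T$, i.e.\ a length measure on $T$.

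Second, and more seriously, your final interpolation step cannot work as described. The limit set is taken in $\partial\CV[7]\subset\P\R^{\free[7]}$, so subsequential limits of $(\tau_i,\ell_i)$ do not depend on any choice of normalizing constants: rescaling $\ell_{i_k}$ by an ``interpolated constant'' yields the same projective point $[(\tau_{i_k},\ell_{i_k})]$ and hence the same limit. Since the even and odd subsequences already converge projectively (to $[T_e]$ and $[T_o]$ respectively, by \Cref{lem:TeTo}), the set of accumulation points of the discrete sequence is exactly $\{[T_e],[T_o]\}$; no diagonal argument over rescalings can manufacture interior points of the segment. The interior points can only arise by passing from the discrete sequence to the continuous unfolding path interpolating it, whose limit set is connected, contained in $\P\D(T)$ by the projection argument above, and contains both extreme points --- hence equals $\P\D(T)$. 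Connectivity of the path's limit set, not a rescaling trick, is the mechanism you are missing. (To be fair, the paper's own proof stops after identifying $T_e$ and $T_o$ as the two extreme points and leaves this last step implicit.)
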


  \begin{proof}
    Since the folding $(\tau_i)'_i$ and the unfolding sequence $(\tau_i)_i$
    are equal as marked graphs for all $i \ge 0$, no matter the metric,
    they both visit the same sequence of simplices in $\CV[7]$. In particular,
    they both project to the same quasigeodesic in $\FreeF[7]$. Thus, the
    two limiting trees $T_e$ and $T_o$ of the even and odd sequences of
    $(\tau_i,\ell_i)$ are length measures on $T$.  
    
    Recall $\P\D(T)$ is a $1$-simplex by \Cref{cor:trees 1-simplex}. If
    neither $T_e$ nor $T_o$ are the extreme points of this simplex, then
    there exist constants $c, c'>0$ such that any $x \in \free$, 
    \[c'\leq \frac{\norm{x}_{T_o}}{\norm{x}_{T_e}} \leq c.\]  
    On the other hand, if one of them, say $T_o$, is an extreme point but
    $T_e$ is not, then we have a constant $c>0$ such that for any $x \in
    \free$, $ \frac{\norm{x}_{T_o}}{\norm{x}_{T_e}} \leq c.$ In both the
    cases, we get a contradiction to \Cref{lem:ratio}.  \qedhere 

  \end{proof}

\section{Conclusion}

  \label{sec:conclusion}

  Recall $\phi\in \Aut(\free[7])$ is the automorphism: 
  \[
    a\mapsto b,b\mapsto c,c\mapsto ca,d\mapsto d,e\mapsto e,f\mapsto
  f,g\mapsto g
  \]
  and $\rho\in \Aut(\free[7])$ is the rotation by 4 clicks:
  \[ a\mapsto e, b\mapsto f, c\mapsto g, d\mapsto a, e\mapsto b, f\mapsto
  c, g\mapsto d.\] For any integer $r$, let $\phi_r = \rho \phi^r$. To each
  sequence $(r_i)_{i \ge 0}$ of positive integers, we have an unfolding
  sequence $(\tau_i)_i$ with train track map $\phi_{r_i}\from \tau_i \to
  \tau_{i-1}$, and a folding sequence $(\tau_i')_i$ with train track map
  $\phi_{r_i}^{-1}\from \tau_{i-1}' \to \tau_i$. By the limit set of the
  unfolding sequence $(\tau_i)_i$ in $\partial \CV$ we mean the limit set
  of $(\tau_i,\ell_i)$ with respect to some (any) positive length vector
  $\ell_i$ on $\tau_i$.   

  \begin{main} \label{thm:conclusion}
  
    Given a strictly increasing sequence $(r_i)_{i \geq 1}$ satisfying $r_i
    \equiv i \mod 7$ and $r_i \equiv 0 \mod 3$, then the folding sequence
    $(\tau'_i)_i$ converges to a non-geometric arational tree $T$. 

    If $(r_i)_i$ grows fast enough, that is, if $r_{i+1}-r_i \ge i$, then
    $T$ is both non-uniquely ergometric and non-uniquely ergodic. Both
    $\P\D(T)$ and $\P\C(T)$ are 1-dimensional simplices.

    Furthermore, the limit set in $\partial \CV[7]$ of the unfolding
    sequence $(\tau_i)_i$ is always the 1-simplex spanned by the two
    ergodic metrics on $T$. 
  \end{main}

  \begin{proof}
    A sequence as in the statement exists by the Chinese remainder theorem.
    The first statement follows from \Cref{cor:arational} and
    \Cref{prop:nongeometric}. Non-unique ergometricity of $T$
    follows from \Cref{prop:IsoLength} and
    \Cref{cor:trees 1-simplex}. Non-unique ergodicity of $T$ is
    \Cref{cor:nue}. Finally, the last statement is
    \Cref{cor:ergotrees}. \qedhere 
  \end{proof}

\section{Appendix}

  \label{sec:appendix}

  \subsection{Convergence Lemma}

  Let $\norm{\cdot}$ denote the operator norm. Thus $\norm{Y}\geq 1$ for a
  nontrivial idempotent matrix $Y$.

  \begin{lemma}\label{lem:convergence}
    Let $Y$ be an idempotent matrix and $\Delta_i$, $i\geq 1$, a sequence of
    matrices with $\norm{\Delta_i}\leq \frac {\epsilon}{2^i}$ for some
    $\epsilon>0$. Assume also that $\epsilon \norm{Y}\leq 1/2$. Then the
    infinite product $$\prod_{i=1}^\infty (Y+\Delta_i)$$
    converges to a matrix $X$ with $\norm{X-Y}\leq 2\epsilon
    \Big(\norm{Y}+\norm{Y}^2 \Big)$. Moreover, the kernel of $Y$ is
    contained in the kernel of $X$.
  \end{lemma}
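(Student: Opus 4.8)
The plan is to work directly with the partial products $X_n=(Y+\Delta_1)(Y+\Delta_2)\cdots(Y+\Delta_n)$ and to exploit the idempotency relation $Y^2=Y$. Expanding $X_n$ distributively gives a sum over the $2^n$ subsets $S\subseteq\{1,\dots,n\}$ of the ordered product in which $\Delta_i$ occupies slot $i$ for $i\in S$ and $Y$ occupies the remaining slots; since $Y^m=Y$ for $m\ge 1$ and $Y^0=I$, the empty subset contributes $Y$ itself and the term for a nonempty $S$ collapses to a product of the $|S|$ matrices $\Delta_i$ ($i\in S$, in increasing order) interleaved with exactly $|S|+1$ factors, each equal to $I$ or $Y$. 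I may assume $Y\ne 0$, since otherwise $\norm{X_n}\le\prod_{i\le n}\epsilon/2^i\to 0$ and the statement is trivial; then $\norm{Y}\ge 1$, so the term for an $S$ with $|S|=k$ has norm at most $\norm{Y}^{k+1}\prod_{i\in S}\norm{\Delta_i}$.

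First I would establish the norm estimate. Summing over nonempty $S$ and using $\sum_{i_1<\dots<i_k}\prod_j\norm{\Delta_{i_j}}\le\frac{1}{k!}\big(\sum_i\norm{\Delta_i}\big)^k\le\epsilon^k/k!$ (as $\sum_i\norm{\Delta_i}\le\epsilon$), the total contribution of the nonempty subsets is at most $\sum_{k\ge 1}\norm{Y}^{k+1}\epsilon^k/k!=\norm{Y}\big(e^{\epsilon\norm{Y}}-1\big)$. Since $\epsilon\norm{Y}\le 1/2$ and $e^x-1\le x+x^2$ on $[0,1]$, this is at most $\epsilon\norm{Y}^2(1+\epsilon\norm{Y})\le\frac{3}{2}\epsilon\norm{Y}^2\le 2\epsilon(\norm{Y}+\norm{Y}^2)$, so $\norm{X_n-Y}$ is at most $2\epsilon(\norm{Y}+\norm{Y}^2)$ uniformly in $n$; in particular $\norm{X_n}\le\norm{Y}e^{\epsilon\norm{Y}}<2\norm{Y}$ for all $n$.

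Next comes convergence. The key identity is $(Y+\Delta_n)(Y-I)=Y^2-Y+\Delta_n Y-\Delta_n=\Delta_n(Y-I)$, which yields $X_n(Y-I)=X_{n-1}\Delta_n(Y-I)$ and hence $\norm{X_n(Y-I)}\le\norm{X_{n-1}}\,\norm{\Delta_n}\,\norm{Y-I}=O(2^{-n})$. Therefore $X_{n+1}-X_n=X_n(Y-I)+X_n\Delta_{n+1}$ is $O(2^{-n})$, so $(X_n)$ is Cauchy; let $X$ denote its limit. Passing to the limit in the uniform bound gives $\norm{X-Y}\le 2\epsilon(\norm{Y}+\norm{Y}^2)$. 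For the kernel claim, suppose $Yv=0$; reading $X_nv$ from the right, $(Y+\Delta_n)v=\Delta_n v$, so $X_nv=X_{n-1}\Delta_n v$ and $\norm{X_nv}\le 2\norm{Y}\,(\epsilon/2^n)\,\norm{v}\to 0$, whence $Xv=0$.

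I expect the only real subtlety to be the bookkeeping in the expansion: checking that the interleaved $I$-or-$Y$ factors number exactly $|S|+1$ (the possibly-trivial blocks at the two ends and between consecutive chosen indices) and that collapsing powers of $Y$ via $Y^m=Y$ is legitimate — this is exactly where $\norm{Y}\ge 1$ is used, to bound each such factor by $\norm{Y}$. Everything else is routine estimation with the geometric weights $\epsilon/2^i$, resting only on $Y^2=Y$ and the derived identity $(Y+\Delta)(Y-I)=\Delta(Y-I)$.
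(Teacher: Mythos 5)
Your proof is correct, and it takes a genuinely different route from the one in the paper. You expand the partial product $X_n=\prod_{i=1}^n(Y+\Delta_i)$ combinatorially over subsets $S$, collapse the interleaved powers of $Y$ via idempotency, and bound the sum of the non-trivial terms by $\sum_{k\ge1}\norm{Y}^{k+1}\epsilon^k/k!=\norm{Y}(e^{\epsilon\norm{Y}}-1)$ using the elementary symmetric function inequality; the paper instead sets $\Sigma_k=X_k-Y$, derives the recursion $\Sigma_{k+1}=Y\Delta_{k+1}+\Sigma_k(Y+\Delta_{k+1})$, and bootstraps norm bounds on $\Sigma_kY$ and then on $\Sigma_k$ by induction. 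Your closed-form estimate is cleaner and in fact yields the sharper constant $\tfrac32\epsilon\norm{Y}^2$ (which you correctly relax to the stated $2\epsilon(\norm{Y}+\norm{Y}^2)$); the paper's recursive method is what lets it directly estimate tail products $\prod_{i=k}^m(Y+\Delta_i)$, which it reuses for both the Cauchy argument and the kernel statement. You obtain the same two facts more economically from the identity $(Y+\Delta_n)(Y-I)=\Delta_n(Y-I)$, giving $X_{n+1}-X_n=X_n(Y-I)+X_n\Delta_{n+1}=O(2^{-n})$, and from $X_nv=X_{n-1}\Delta_nv$ for $v\in\ker Y$. All the individual steps check out: the $k+1$ interleaved blocks, the use of $\norm{Y}\ge1$ for $Y\ne0$ (with the trivial $Y=0$ case disposed of separately, as the paper also implicitly assumes), the bound $e^x-1\le x+x^2$ on $[0,1]$, and the uniform bound $\norm{X_n}<2\norm{Y}$ feeding the Cauchy and kernel estimates.
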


  \begin{proof}

    Write $$Y+\Sigma_k=\prod_{i=1}^k (Y+\Delta_i)$$
    Then $(Y+\Sigma_k)(Y+\Delta_{k+1})=Y+\Sigma_{k+1}$ and since $Y^2=Y$
    it follows that
    \begin{equation}\label{eqn:convergence}
    \Sigma_{k+1}=Y\Delta_{k+1}+\Sigma_k(Y+\Delta_{k+1})
    \end{equation}

    Multiplying on the right by $Y$ and using $Y^2=Y$ we get
    $$\Sigma_{k+1}Y=Y\Delta_{k+1}Y+\Sigma_kY+\Sigma_k\Delta_{k+1}Y$$
    and applying the norm:
    $$\norm{\Sigma_{k+1}Y}\leq \norm{\Sigma_kY}+\frac
    {\epsilon}{2^{k+1}}
    \norm{Y}^2+\norm{\Sigma_k}\frac{\epsilon}{2^{k+1}}\norm{Y}$$
    By adding these for $k=1,2,\cdots,m-1$ and using $\Sigma_1=\Delta_1$
    we have
    \begin{align*}
      \norm{\Sigma_mY}
      & \leq \norm{\Sigma_1Y}+\epsilon \norm{Y}^2 \left(\frac 14+\cdots +\frac
      1{2^m}\right)+\epsilon\norm{Y}\left(
      \frac{\norm{\Sigma_1}}{4}+\cdots+\frac{\norm{\Sigma_{m-1}}}{2^m}\right)
      \\
      & \leq \epsilon
      \Big( \norm{Y}+\norm{Y}^2 \Big) + \epsilon\norm{Y}\left(
      \frac{\norm{\Sigma_1}}{4}+\cdots+\frac{\norm{\Sigma_{m-1}}}{2^m} \right)
    \end{align*}

    So the norms of $\Sigma_mY$ are bounded by norms of $\Sigma_i$ with
    $i<m$. From \Cref{eqn:convergence} we also see that the norm of
    $\Sigma_{k+1}$ is bounded by the norms of $\Sigma_kY$. Putting this
    together we have
    \begin{align*}
      \norm{\Sigma_{k+1}}
      &\leq
      \norm{Y}\norm{\Delta_{k+1}}+\norm{\Sigma_kY}+\norm{\Sigma_k}\norm{\Delta_{k+1}}\\
      &\leq
      \frac{\epsilon}{2^{k+1}} \norm{Y} + \norm{\Sigma_kY} +
      \frac{\epsilon}{2^{k+1}} \norm{\Sigma_k}\\ 
      &\leq
      \frac{\epsilon}{2^{k+1}} \norm{Y} + \frac{\epsilon}{2}
      \Big( \norm{Y}+\norm{Y}^2 \Big) + \epsilon \norm{Y} \left(
      \frac{\norm{\Sigma_1}}{4} + \cdots +
      \frac{\norm{\Sigma_{k-1}}}{2^k} \right) + \frac{\epsilon}{2^{k+1}} \norm{\Sigma_k} \\
      &\leq
      \epsilon \Big(\norm{Y}+\norm{Y}^2 \Big) + \epsilon\norm{Y} \left(
      \frac{\norm{\Sigma_1}}{4}+\cdots+\frac{\norm{\Sigma_{k-1}}}{2^k}
      + \frac{\norm{\Sigma_k}}{2^{k+1}}\right). 
    \end{align*}
    Thus we have an inequality of the form
    \[ \norm{\Sigma_{k+1}}\leq
      a + b \left( \frac{\norm{\Sigma_1}}{4} +
      \cdots + \frac{\norm{\Sigma_k}}{2^{k+1}} \right)
    \]
    for $a = \epsilon \Big(\norm{Y} + \norm{Y}^2 \Big)$ and $b = \epsilon
    \norm{Y}$. 
    
    Set $c = 2 \epsilon \Big( \norm{Y}+\norm{Y}^2 \Big)$. Then $c \ge
    \epsilon$, $a \le c/2$, and $b \leq 1/2$ by assumption. Easy induction
    then shows for all $k \ge 1$, 
    \begin{equation}\label{eqn:sigma_k}
      \norm{\Sigma_k}\leq c.
    \end{equation} 
    This obtains the inequality $\norm{X-Y}\leq c$ from the statement, once
    we establish convergence.

    To see convergence we argue that the sequence of partial products
    forms a Cauchy sequence. For $1<k<m$,
    \begin{align*}
      \prod_{i=1}^m (Y+\Delta_i) - \prod_{i=1}^k (Y+\Delta_i)=
      \prod_{i=1}^{k-1} (Y+\Delta_i) \bigg(\prod_{i=k}^m (Y+\Delta_i) -
      (Y+\Delta_k)\bigg)
    \end{align*}
    By \Cref{eqn:sigma_k}, the norm of $\prod_{i=1}^{k-1}
    (Y+\Delta_i) = Y + \Sigma_{k-1}$ is bounded by $c+\norm{Y}$. We can
    apply the same estimate to the sequence starting with $Y+\Delta_k$ and
    with $\epsilon$ replaced with
    $\frac {\epsilon}{2^{k-1}}$ to see that 
    \[\norm{\prod_{i=k}^m (Y+\Delta_i) - Y} \leq \frac
    {2\epsilon(\norm{Y}+\norm{Y}^2) }{2^{k-1}}\leq \frac {c}{2^{k-1}}\]
    and so 
    \[
      \norm{\prod_{i=k}^m (Y+\Delta_i) - (Y+\Delta_k)}\leq
      \frac{c}{2^{k-1}}+ \frac{1}{2^k} 
    \]
    which proves the sequence is Cauchy. 

    For the second statement, set $X_k = \prod_{i=k}^\infty (Y+\Delta_i)$
    for $k \ge 1$. By the same estimate as above with $\epsilon$ replaced
    with $\frac{\epsilon}{2^{k-1}}$, we know that $X_k$ exists and 
    \[ 
      \norm{X_k - Y} \le \frac{2 \epsilon}{2^{k-1}} \Big( \norm{Y} +
      \norm{Y}^2 \Big) = \frac{c}{2^{k-1}}.
    \]
    By definition, $X = (Y+\Sigma_k)X_{k+1}$. Suppose $v$ is a unit vector
    with $Yv = 0$. Then
    \begin{align*}
      \norm{Xv} 
      & \le \norm{Y+\Sigma_k} \norm{X_{k+1} v} \\
      & = \norm{Y+\Sigma_k} \norm{X_{k+1} v - Yv} \\
      & \le \norm{Y+\Sigma_k} \norm{X_{k+1} - Y} \\
      & \le (\norm{Y}+c) \frac{c}{2^k}.
    \end{align*}
    Since this is true for all $k \ge 0$, letting $k \to \infty$ yields
    $X v = 0$. \qedhere
    \end{proof}


  \bibliography{nue}

\begin{thebibliography}{{Rey}12}

\bibitem[BF94]{BF:OuterLimits}
Mladen Bestvina and Mark Feighn.
\newblock Outer limits.
\newblock 1994.
\newblock preprint, available at \\ {\tt
  \url{http://andromeda.rutgers.edu/~feighn/papers/outer.pdf}}.

\bibitem[BF95]{BF:Stable}
Mladen Bestvina and Mark Feighn.
\newblock Stable actions of groups on real trees.
\newblock {\em Invent. Math.}, 121(2):287--321, 1995.

\bibitem[BF14a]{BF:FreeFactorComplex}
Mladen Bestvina and Mark Feighn.
\newblock Hyperbolicity of the complex of free factors.
\newblock {\em Adv. Math.}, 256:104--155, 2014.

\bibitem[BF14b]{BF:Subfactor}
Mladen Bestvina and Mark Feighn.
\newblock Subfactor projections.
\newblock {\em J. Topol.}, 7(3):771--804, 2014.

\bibitem[BFH97]{BFH:Laminations}
Mladen Bestvina, Mark Feighn, and Michael Handel.
\newblock Laminations, trees, and irreducible automorphisms of free groups.
\newblock {\em Geom. Funct. Anal.}, 7(2):215--244, 1997.

\bibitem[BFH00]{BFH:Tits}
Mladen Bestvina, Mark Feighn, and Michael Handel.
\newblock The {T}its alternative for {${\rm Out}(F_n)$}. {I}. {D}ynamics of
  exponentially-growing automorphisms.
\newblock {\em Ann. of Math. (2)}, 151(2):517--623, 2000.

\bibitem[BH92]{BH:TrainTracks}
Mladen Bestvina and Michael Handel.
\newblock Train tracks and automorphisms of free groups.
\newblock {\em Ann. of Math. (2)}, 135(1):1--51, 1992.

\bibitem[BR15]{BR:FFBound}
Mladen Bestvina and Patrick Reynolds.
\newblock The boundary of the complex of free factors.
\newblock {\em Duke Math. J.}, 164(11):2213--2251, 2015.

\bibitem[CH16]{CH:ErgodicCurrents}
Thierry Coulbois and Arnaud Hilion.
\newblock Ergodic currents dual to a real tree.
\newblock {\em Ergodic Theory Dynam. Systems}, 36(3):745--766, 2016.

\bibitem[CHL07]{CHL:NUE}
Thierry Coulbois, Arnaud Hilion, and Martin Lustig.
\newblock Non-unique ergodicity, observers' topology and the dual algebraic
  lamination for {$\mathbb{R}$}-trees.
\newblock {\em Illinois J. Math.}, 51(3):897--911, 2007.

\bibitem[CHL08a]{CHLI}
Thierry Coulbois, Arnaud Hilion, and Martin Lustig.
\newblock {$\mathbb{R}$}-trees and laminations for free groups. {I}.
  {A}lgebraic laminations.
\newblock {\em J. Lond. Math. Soc. (2)}, 78(3):723--736, 2008.

\bibitem[CHL08b]{CHLII}
Thierry Coulbois, Arnaud Hilion, and Martin Lustig.
\newblock {$\mathbb{R}$}-trees and laminations for free groups. {II}. {T}he
  dual lamination of an {$\mathbb{R}$}-tree.
\newblock {\em J. Lond. Math. Soc. (2)}, 78(3):737--754, 2008.

\bibitem[CHR15]{CHR}
Thierry Coulbois, Arnaud Hilion, and Patrick Reynolds.
\newblock Indecomposable {$F_N$}-trees and minimal laminations.
\newblock {\em Groups Geom. Dyn.}, 9(2):567--597, 2015.

\bibitem[CL95]{CL:BoundaryOuterSpace}
Marshall~M. Cohen and Martin Lustig.
\newblock Very small group actions on {${\bf R}$}-trees and {D}ehn twist
  automorphisms.
\newblock {\em Topology}, 34(3):575--617, 1995.

\bibitem[CM87]{CM:LengthFunction}
Marc Culler and John~W. Morgan.
\newblock Group actions on {${\bf R}$}-trees.
\newblock {\em Proc. London Math. Soc. (3)}, 55(3):571--604, 1987.

\bibitem[Coo87]{cooper}
Daryl Cooper.
\newblock Automorphisms of free groups have finitely generated fixed point
  sets.
\newblock {\em J. Algebra}, 111(2):453--456, 1987.

\bibitem[Cou]{coulbois}
Thierry Coulbois.
\newblock Train track package.
\newblock Available at \\
  {\url{https://www.i2m.univ-amu.fr/perso/thierry.coulbois/train-track/}}.

\bibitem[CV86]{CV:OuterSpace}
Marc Culler and Karen Vogtmann.
\newblock Moduli of graphs and automorphisms of free groups.
\newblock {\em Invent. Math.}, 84(1):91--119, 1986.

\bibitem[FH18]{feighn-handel}
Mark Feighn and Michael Handel.
\newblock Algorithmic constructions of relative train track maps and {CT}s.
\newblock {\em Groups Geom. Dyn.}, 12(3):1159--1238, 2018.

\bibitem[Gab09]{Gabai}
David Gabai.
\newblock Almost filling laminations and the connectivity of ending lamination
  space.
\newblock {\em Geom. Topol.}, 13(2):1017--1041, 2009.

\bibitem[Gui00]{Guirardel:Dynamics}
Vincent Guirardel.
\newblock Dynamics of {${\text{Out}}(F_n)$} on the boundary of outer space.
\newblock {\em Ann. Sci. \'{E}cole Norm. Sup. (4)}, 33(4):433--465, 2000.

\bibitem[Ham16]{H}
Ursula Hamenst\"adt.
\newblock The boundary of the free splitting graph and the free factor graph.
\newblock arXiv:1211.1630, 2016.

\bibitem[Hor17]{Horbez}
Camille Horbez.
\newblock The boundary of the outer space of a free product.
\newblock {\em Israel J. Math.}, 221(1):179--234, 2017.

\bibitem[Kap19]{ilya}
Ilya Kapovich.
\newblock Detecting fully irreducible automorphisms: a polynomial time
  algorithm.
\newblock {\em Exp. Math.}, 28(1):24--38, 2019.
\newblock With an appendix by Mark C. Bell.

\bibitem[Kea77]{Keane}
Michael Keane.
\newblock Non-ergodic interval exchange transformations.
\newblock {\em Israel J. Math.}, 26(2):188--196, 1977.

\bibitem[Ker80]{Ker80}
Steven~P. Kerckhoff.
\newblock The asymptotic geometry of {T}eichm\"{u}ller space.
\newblock {\em Topology}, 19(1):23--41, 1980.

\bibitem[KL09]{KL:IntersectionForm}
Ilya Kapovich and Martin Lustig.
\newblock Geometric intersection number and analogues of the curve complex for
  free groups.
\newblock {\em Geom. Topol.}, 13(3):1805--1833, 2009.

\bibitem[KL10]{KL:ZeroLength}
Ilya Kapovich and Martin Lustig.
\newblock Intersection form, laminations and currents on free groups.
\newblock {\em Geom. Funct. Anal.}, 19(5):1426--1467, 2010.

\bibitem[KL14]{KL:InvariantLaminations}
Ilya Kapovich and Martin Lustig.
\newblock Invariant laminations for irreducible automorphisms of free groups.
\newblock {\em Q. J. Math.}, 65(4):1241--1275, 2014.

\bibitem[KN76]{KeynesNewton}
Harvey~B. Keynes and Dan Newton.
\newblock A ``minimal'', non-uniquely ergodic interval exchange transformation.
\newblock {\em Math. Z.}, 148(2):101--105, 1976.

\bibitem[Len08]{Len08}
Anna Lenzhen.
\newblock Teichm\"{u}ller geodesics that do not have a limit in {PMF}.
\newblock {\em Geom. Topol.}, 12(1):177--197, 2008.

\bibitem[LLR18]{LLR}
Christopher Leininger, Anna Lenzhen, and Kasra Rafi.
\newblock Limit sets of {T}eichm\"{u}ller geodesics with minimal non-uniquely
  ergodic vertical foliation.
\newblock {\em J. Reine Angew. Math.}, 737:1--32, 2018.

\bibitem[LP97]{levitt-paulin}
Gilbert Levitt and Fr\'{e}d\'{e}ric Paulin.
\newblock Geometric group actions on trees.
\newblock {\em Amer. J. Math.}, 119(1):83--102, 1997.

\bibitem[Mar97]{reiner}
Reiner Martin.
\newblock Non-uniquely ergodic foliations of thin type.
\newblock {\em Ergodic Theory Dynam. Systems}, 17(3):667--674, 1997.

\bibitem[Mas75]{Mas75}
Howard Masur.
\newblock On a class of geodesics in {T}eichm\"{u}ller space.
\newblock {\em Ann. of Math. (2)}, 102(2):205--221, 1975.

\bibitem[MW95]{MW95}
Howard~A. Masur and Michael Wolf.
\newblock Teichm\"{u}ller space is not {G}romov hyperbolic.
\newblock {\em Ann. Acad. Sci. Fenn. Ser. A I Math.}, 20(2):259--267, 1995.

\bibitem[NPR14]{NPR}
Hossein {Namazi}, Alexandra {Pettet}, and Patrick {Reynolds}.
\newblock {Ergodic decompositions for folding and unfolding paths in Outer
  space}.
\newblock {\em arXiv:1410.8870}, October 2014.

\bibitem[Pau95]{Paulin:Habilitation}
Fr\'{e}d\'{e}ric Paulin.
\newblock De la g\'{e}om\'{e}trie et la dynamique des groupes discrets.
\newblock {\em Habilitation \'{a} diriger les recherches, E.N.S. Lyon}, 6 1995.

\bibitem[{Rey}12]{R:ReducingSystems}
Patrick {Reynolds}.
\newblock {Reducing systems for very small trees}.
\newblock {\em arXiv:1211.3378}, November 2012.

\bibitem[{Sag}]{sage1}
{Sage Developers}.
\newblock {\em {S}ageMath, the {S}age {M}athematics {S}oftware {S}ystem
  ({V}ersion 9.0.0)}.
\newblock {\tt https://www.sagemath.org}.

\bibitem[Sat75]{Sataev}
E.~A. Sataev.
\newblock The number of invariant measures for flows on orientable surfaces.
\newblock {\em Izv. Akad. Nauk SSSR Ser. Mat.}, 39(4):860--878, 1975.

\bibitem[Tay14]{Taylor:Subfactor}
Samuel~J. Taylor.
\newblock A note on subfactor projections.
\newblock {\em Algebr. Geom. Topol.}, 14(2):805--821, 2014.

\bibitem[Vee69]{Veech}
William~A. Veech.
\newblock Strict ergodicity in zero dimensional dynamical systems and the
  {K}ronecker-{W}eyl theorem {${\rm mod}\ 2$}.
\newblock {\em Trans. Amer. Math. Soc.}, 140:1--33, 1969.

\end{thebibliography}

  \end{document}